\newlength{\depthofsumsign}
\let\I\@undefined
\DeclareMathOperator{\IKM}{\mathbf{IKM}}
\DeclareMathOperator{\JYM}{\mathbf{JYM}}
\DeclareMathOperator{\D}{d}
\DeclareMathOperator{\I}{Im}
\DeclareMathOperator{\R}{Re}
\def\eor{\hfill$ \square$}
\theoremstyle{plain}
\newtheorem{theorem}{Theorem}[subsection]
\newtheorem{proposition}[theorem]{Proposition}
\newtheorem{lemma}[theorem]{Lemma}
\newenvironment{remark}[1][Remark]{\begin{trivlist}
\item[\hskip \labelsep {\bfseries #1}]}{\end{trivlist}}
\newbox\shell
\newcommand{\dia}[2]{\setbox\shell=\hbox{\begin{picture}(180,120)(-90,-60)#1
\put(-90,-60){\makebox(180,120)[b]{\large #2}}\end{picture}}\dimen0=\ht
\shell\multiply\dimen0by7\divide\dimen0by16\raise-\dimen0\box\shell\hfill}
\theoremstyle{definition}
\numberwithin{equation}{subsection}
\begin{document}

\pagenumbering{roman}
\selectlanguage{english}
\title[Wick rotations, Eichler integrals and multi-loop Feynman diagrams]{Wick Rotations, Eichler integrals,\\ and multi-loop Feynman diagrams}
\author{Yajun Zhou}
\address{Program in Applied and Computational Mathematics (PACM), Princeton University, Princeton, NJ 08544; Academy of Advanced Interdisciplinary Studies (AAIS), Peking University, Beijing 100871, P. R. China }
\email{yajunz@math.princeton.edu, yajun.zhou.1982@pku.edu.cn}\thanks{\textit{Keywords}:  Feynman integrals,  Wick rotations, Bessel functions, Hankel transforms, random walks \\\indent\textit{MSC 2010}: 11F03, 14E18  (Primary) 81T18, 81T40, 81Q30,  60G50 (Secondary)}
\date{\today}

\maketitle

\begin{abstract}
    Using contour deformations and integrations over modular forms, we compute certain Bessel moments arising from diagrammatic expansions in two-dimensional quantum field theory. We evaluate these Feynman integrals  as either explicit constants or  critical values of modular $L$-series, and verify several recent conjectures of Broad\-hurst. \end{abstract}

\pagenumbering{roman}
\tableofcontents

\clearpage

\pagenumbering{arabic}

\section{Introduction}
\subsection{Background and motivations}
In quantum field theory (QFT), we encounter integrals over Bessel functions while performing diagrammatic expansions in the configuration space. For two-dimensional QFT,   we need Bessel functions $ J_0$ and $Y_0$, as well as modified Bessel functions $I_0$ and $K_0$, to define propagators and compute Feynman integrals \cite{Groote2007,Bailey2008,Broadhurst2013MZV,Broadhurst2016,BroadhurstMellit2016}.

 We are interested in    Bessel moments    $ \JYM(\alpha,\beta;\nu):=\int_0^\infty[J_0(t)]^\alpha[Y_0(t)]^{\beta}t^{\nu}\D t$  and   $ \IKM(a,b;n):=\int_0^\infty[I_0(t)]^a[K_0(t)]^{b}t^{n}\D t$, where the non-negative integers $ \alpha,\beta,\nu,a,b,n$ are chosen to ensure convergence of the corresponding integrals. The Bessel moments $ \JYM$'s are  useful auxiliary tools for computing $\IKM$'s in  two-dimensional QFT. Furthermore, the $ \IKM$'s also show up in the finite part for  renormalized perturbative expansions of four-dimensional QFT: for example, $\IKM(1,5;1) $ and $ \IKM(1,5;3)$ are part of the  4-loop contributions  (from 891 Feynman diagrams) to  electron's magnetic moment \cite[][(19) and Fig.~3($a$)($a'$)]{Laporta:2017okg}, according to the standard formulation of quantum electrodynamics (four-dimensional QFT).

The mathematical understanding of
$\JYM(\alpha,\beta;\nu)$ for $ \alpha+\beta\geq5$  and
$\IKM(a,b;n)$ for $ a+b\geq5$ is relatively scant.
While numerical experiments have suggested a rich collection of identities relating various cases of
$\IKM(a,b;1)$ (each of which corresponding to a Feynman diagram containing $b-1$ loops) to special values of certain Hasse--Weil $L$-series  for $ a+b\in\{5,6,7,8\}$ \cite{Broadhurst2013MZV,Broadhurst2016,BroadhurstMellit2016}, most of these conjectural evaluations are heretofore unproven.

In our recent work \cite{HB1}, we have  shown that \begin{align}
\int_0^\infty \frac{[\pi I_{0}(t)+iK_0(t)]^m+[\pi I_{0}(t)-iK_0(t)]^m}{i}[K_0(t)]^mt^n\D t=0\label{eq:HB_sum_rule}
\end{align} for  $ m\in\mathbb Z_{>1},n\in\mathbb Z_{\geq0},\frac{m-n}{2}\in\mathbb Z_{>0}$,   and \begin{align}\label{eq:HB_sum_rule'}
\int_0^\infty \frac{[\pi I_{0}(t)+iK_0(t)]^m-[\pi I_{0}(t)-iK_0(t)]^m}{i}[K_0(t)]^mt^n\D t=0
\end{align}for $ m\in\mathbb Z_{>0},n\in\mathbb Z_{\geq0},\frac{m-n-1}{2}\in\mathbb Z_{>0}$  (Bai\-ley--Bor\-wein--Broad\-hurst--Glasser sum rule  {\cite[][``final conjecture'', (220)]{Bailey2008}}, with  generalizations). In addition, we have also confirmed that \begin{align}\begin{split}&\frac{2^{1+2(n-1)[1-(-1)^{m}]}}{\pi^{m+1}}\int_0^\infty \frac{[\pi I_0(t)+i K_{0}(t)]^{m}-[\pi I_0(t)-i K_{0}(t)]^{m}}{i}\times\\&\times[K_0(t)]^{m}(2t)^{2n+m-3}\D t\end{split}\end{align}  evaluates to a positive integer for all $ m,n\in\mathbb Z_{>0}$ (Broadhurst--Mellit integer sequence  {\cite[][(149) in Conjecture 5]{Broadhurst2016}} and Broadhurst--Roberts rational sequence \cite[][Conjecture 2]{Broadhurst2017Paris}). While the aforementioned results resolve some longstanding conjectures, they barely scratch the surface of the algebraic and arithmetic nature of Bessel moments. For example, the determinant    $ \IKM(1,4;1)\IKM(2,$ $3;3)-\IKM(2,3;1)\IKM(1,4;3)=2\pi^{3}/\sqrt{3^35^5}$ \cite[conjectured in][(100)]{Broadhurst2016} and the sum rule     $ 9\pi^2\IKM(4,$ $4;1)-14\IKM(2,6;1)=0$   \cite[conjectured in][(147)]{Broadhurst2016} had not been covered by the real-analytic methods we employed in  \cite{HB1}.

\subsection{Statement of results and plan of proof}
 In this  article, we supplement our previous work with complex analysis and modular forms, which are two powerful devices that not only produce new algebraic relations among different $ \IKM$ moments, but also connect Feynman diagrams to special $L$-values and Kluyver's ``random walk integrals'' $\JYM(n,0,1) ,n\in\mathbb Z_{\geq5}$ \cite{Kluyver1906,BSWZ2012,BSV2016}.

The layout of this paper is described in the next four paragraphs.

Beginning with a brief survey of the analytic properties for (modified) Bessel functions in \S\ref{subsec:HankelH1H2}, we
introduce Wick rotations, which are contour deformations that allow us to convert $ \IKM$ problems into $\JYM$ problems, in \S\ref{subsec:Wick}. We demonstrate the usefulness of Wick rotations by a very short (yet self-contained) proof of the closed-form evaluation of a Bessel moment \begin{align}
\int_{0}^\infty I_0(t)[K_0(t)]^4t\D t=\frac{\Gamma \left(\frac{1}{15}\right) \Gamma \left(\frac{2}{15}\right) \Gamma \left(\frac{4}{15}\right) \Gamma \left(\frac{8}{15}\right)}{240 \sqrt{5}}\label{eq:Bologna}
\end{align}in terms of Euler's gamma function  $ \Gamma(x):=\int_0^\infty u^{x-1}e^{-u}\D u$ for $x>0$. It is worth noting that nearly a decade had elapsed between the original proposal  \cite{Bailey2008,Laporta2008}  of \eqref{eq:Bologna} and its first rigorous (and highly technical) verification \cite{BlochKerrVanhove2015,Samart2016}.
Our
simplified proof of \eqref{eq:Bologna} draws on its connection to a ``random walk integral'' $ \JYM(5,0;1)$.

In \S\ref{sec:5Bessel}, we push the evaluation of  \eqref{eq:Bologna} one step further, to give explicit verifications of all the entries in the following $2\times 2$ matrix:\begin{align}
\begin{pmatrix}\IKM(1,4;1) & \IKM(1,4;3) \\
\IKM(2,3;1) & \IKM(2,3;3) \\
\end{pmatrix}=\begin{pmatrix}\pi^{2}C & \pi^2\left( \frac{2}{15} \right)^2\left( 13C-\frac{1}{10C} \right)\\
\frac{\sqrt{15}\pi}{2}C & \frac{\sqrt{15}\pi}{2}\left( \frac{2}{15} \right)^2\left( 13C+\frac{1}{10C} \right) \\
\end{pmatrix},\label{eq:BM_m2}
\end{align}where $ C=\frac{1}{240 \sqrt{5}\pi^{2}}\Gamma \left(\frac{1}{15}\right) \Gamma \left(\frac{2}{15}\right) \Gamma \left(\frac{4}{15}\right) \Gamma \left(\frac{8}{15}\right)$ is the ``Bologna constant'' attributed to Broadhurst \cite{Broadhurst2007,Bailey2008} and Laporta \cite{Laporta2008}. (Here, the rigorous evaluation of the top-right entry $ \IKM(1,4;3)$  was previously unattested in the literature.) We accomplish this by using a modular function of level 6 (\S\ref{subsec:RW_mod_form}) that parametrizes a Picard--Fuchs differential equation of third order (\S\ref{eq:Sym2}) attached to a family of $K3$ surfaces  formerly studied  by Bloch--Kerr--Vanhove \cite{BlochKerrVanhove2015} and Samart \cite{Samart2016}.
In addition to proving  \eqref{eq:BM_m2} in \S\ref{subsec:BM_det_5Bessel},
we work out  the Eichler integral representations of  $ \IKM(1,4;1) $, $\IKM(1,4;3)$ and $\IKM(1,4;5)$, which  involve contour integrals over certain holomorphic modular forms.

We devote \S\ref{sec:6Bessel} to the verification of the following integral formulae \cite[conjectured in][(109)--(111)]{Broadhurst2016}:{\allowdisplaybreaks\begin{align}\frac{3}{\pi^{2}}\IKM(1,5;1)=\IKM(3,3;1)={}&-6\pi^2\int_{0}^{i\infty}f_{4,6}(z)z\D z=\frac{3}{2}L(f_{4,6},2),\label{eq:IKM151_331_eta_int}\\\IKM(2,4;1)={}&\frac{\pi^{3}}{i}\int_{0}^{i\infty}f_{4,6}(z)\D z=\frac{\pi ^{2}}{2}L(f_{4,6},1)\notag\\={}&6\pi^{3}i\int_{0}^{i\infty}f_{4,6}(z)z^{2}\D z=\frac{3}{2}L(f_{4,6},3),\label{eq:IKM241_eta_int}
\end{align}}where \begin{align}
f_{4,6}(z)=[\eta(z)\eta(2z)\eta(3z)\eta(6z)]^{2}
\end{align}is a weight-4 modular form defined through the Dedekind eta function\begin{align} \eta(z):=e^{\pi iz/12}\prod_{n=1}^\infty(1-e^{2\pi inz}),\quad z\in\mathfrak H:=\{w\in\mathbb C|\I w>0\}.\label{eq:eta_defn}\end{align}
To prove  these formulae relating Bessel moments to critical $L$-values (a special $L$-value $ L(f,s) $ is said to be critical if $s$ is a positive integer less than the weight of the modular form $f$),  we use modular parametrizations of Hankel transforms and the Parseval--Plancherel identity.

In \S\ref{sec:8Bessel}, we fully exploit the techniques developed in the previous two sections, and confirm the following identities  \cite[cf.][(143)--(146)]{Broadhurst2016}:
\begin{align}\IKM(4,4;1)={}&4\pi^3 i\int_{0}^{i\infty}f_{6,6}(z)z^{2}\D z=L(f_{6,6},3),\\\frac{1}{\pi^{2}}\IKM(1,7;1)=\IKM(3,5;1)={}&6\pi^{4}\int_0^{i\infty} f_{6,6}(z)z^{3}\D z=\frac{9}{4}L(f_{6,6},4),\label{eq:IKM171_351_eta_int}\\\IKM(2,6;1)={}&\frac{9\pi^{5}}{i}\int_{0}^{i\infty}f_{6,6}(z)z^{4}\D z=\frac{27}{4}L(f_{6,6},5),\label{eq:IKM261_eta_int}\end{align}
which involve a weight-6 modular form \begin{align}f_{6,6}(z)=
\frac{ [\eta (2 z) \eta (3 z)]^{9}}{[\eta (z)\eta (6 z)]^{3}}+\frac{ [\eta ( z) \eta (6 z)]^{9}}{[\eta (2z)\eta (3 z)]^{3}}.
\end{align} In addition, we also use explicit computations to verify the Eichler--Shimura--Manin relation  $L(f_{6,6},5)/L(f_{6,6},3) =2\pi^{2}/21$ \cite[cf.][(142)]{Broadhurst2016}
and the sum rule     $ 9\pi^2\IKM(4,4;1)-14\IKM(2,6;1)=0$   \cite[cf.][(147)]{Broadhurst2016}.

Broadhurst has recently proposed a vast set of conjectures \cite{BroadhurstMellit2016,Broadhurst2016,Broadhurst2017Paris,Broadhurst2017CIRM,Broadhurst2017Higgs,Broadhurst2017DESY,Broadhurst2017ESIa,Broadhurst2017ESI} connecting Feynman diagrams to special values of Hasse--Weil $L$-functions, whose local factors arise from Kloosterman sums \cite[][\S\S2--6]{Broadhurst2016}. Our current work only  touches upon  $ \IKM(a,b;1)$ for $ a+b\in\{5,6,8\}$, where the corresponding $L$-series are modular. It is our hope that,  by verifying  a small subset of Broadhurst's thought-inspiring conjectures about Bessel moments, we could make first steps towards  an arithmetic understanding of these important mathematical constants deeply embedded in fundamental laws of nature, \textit{viz.}~quantum electrodynamics. On one hand, we have Feynman diagrams realized as motivic integrals, whose cohomology belongs to the realm of  algebraic geometry; on the other hand, these Feynman integrals also evaluate to arithmetic objects, such as  Eichler integrals and special $L$-values, whose symmetries embellish modern number theory.

\section{Bessel functions and their Wick rotations\label{sec:Hankel}}
\subsection{Some analytic properties of Bessel functions\label{subsec:HankelH1H2}}For $ \nu\in\mathbb C,-\pi<\arg z<\pi$,  the Bessel functions $ J_\nu$ and $Y_\nu$ are defined by\begin{align}
J_\nu(z):={}&\sum_{k=0}^\infty\frac{(-1)^k}{k!\Gamma(k+\nu+1)}\left( \frac{z}{2} \right)^{2k+\nu},&Y_\nu(z):={}&\lim_{\mu\to\nu}\frac{J_\mu(z)\cos(\mu\pi )-J_{-\mu}(z)}{\sin(\mu\pi)},\label{eq:JY_series}\intertext{which may be compared to the modified Bessel functions
$I_\nu$ and $K_\nu$:}I_\nu(z):={}&\sum_{k=0}^\infty\frac{1}{k!\Gamma(k+\nu+1)}\left( \frac{z}{2} \right)^{2k+\nu},&K_\nu(z):={}&\frac{\pi}{2}\lim_{\mu\to\nu}\frac{I_{-\mu}(z)-I_\mu(z)}{\sin(\mu\pi)}.\label{eq:IK_series}\end{align}Hereafter, the fractional powers of complex numbers are defined through $ w^\beta=\exp(\beta\log w)$ for $\log w=\log|w|+i\arg w$, where $ |\arg w|<\pi$.

We will also need the  cylindrical Hankel functions $H^{(1)}_0(z)=J_0(z)+iY_0(z)$ and  $H^{(2)}_0(z)=J_0(z)-iY_0(z)$  of zeroth order, which are both  well defined for  $ -\pi<\arg z<\pi$. In view of \eqref{eq:JY_series} and \eqref{eq:IK_series}, we can verify  \begin{align} J_0(ix)=I_0(x)\quad\text{ and}\quad\frac{\pi i}{2}H_0^{(1)}(ix)=K_0(x) \end{align} as well as \begin{align}
H_0^{(1)}(x+i0^+)=J_{0}(x)+i Y_0(x)\quad\text{and}\quad H_0^{(1)}(-x+i0^+)=-J_{0}(x)+i Y_0(x) \label{eq:H1_x}
\end{align}  for $ x>0$.

As $ |z|\to\infty,-\pi<\arg z<\pi$, we have the following asymptotic behavior: \begin{align}
H_0^{(1)}(z)=\sqrt{\frac{2}{\pi z}}e^{i\left(z-\frac{\pi}{4}\right)}\left[1+O\left( \frac{1}{|z|} \right)\right]\quad \text{and}\quad
H_0^{(2)}(z)=\sqrt{\frac{2}{\pi z}}e^{-i\left(z-\frac{\pi}{4}\right)}\left[1+O\left( \frac{1}{|z|} \right)\right].
\end{align}The asymptotic behavior of $ J_0(z)=[H_0^{(1)}(z)+H_0^{(2)}(z)]/2$ can be inferred accordingly.

\subsection{Contour deformations for Bessel moments\label{subsec:Wick}}

In the next lemma, we present a mechanism that generates cancelation formulae for $ \JYM$. Special cases of this lemma (involving four Bessel factors) have already appeared in \cite[][\S2]{Zhou2017Int4Pnu}.
\begin{lemma}[Bessel--Hankel--Jordan]\label{lm:BHJ}For $ \ell,m,n\in\mathbb Z_{\geq0}$ satisfying either $\ell-(m+n)/2<0; m<n$ or $ \ell-m=\ell-n<-1$, we have \begin{align}
\int_{i0^+-\infty}^{i0^++\infty}[J_0^{\vphantom{(1)}}(z)]^m[H_0^{(1)}(z)]^nz^\ell\D z:=\lim_{\varepsilon\to0^+}\lim_{R\to\infty}\int_{i\varepsilon-R}^{i\varepsilon+R}[J_0^{\vphantom{(1)}}(z)]^m[H_0^{(1)}(z)]^nz^\ell\D z=0.\label{eq:BHJ}
\end{align}\end{lemma}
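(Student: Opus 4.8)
The plan is to promote the horizontal segment $[i\varepsilon-R,i\varepsilon+R]$ to a closed contour by adjoining the upper semicircular arc $C_R$ of radius $R$ centered at $i\varepsilon$, to apply Cauchy's theorem, and then to show that the arc contribution vanishes as $R\to\infty$ by a Jordan-type estimate. The first thing I would record is that the integrand is holomorphic throughout the closed upper half-disk bounded by the segment and $C_R$: the polynomial $z^\ell$ is entire since $\ell\geq0$, the function $J_0$ is entire, and $H_0^{(1)}=J_0+iY_0$ is holomorphic off the branch cut of $Y_0$ along $(-\infty,0]$, hence holomorphic on the region in question, which lies entirely in $\{z:\I z\geq\varepsilon>0\}$. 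Cauchy's theorem then identifies the segment integral with $-\int_{C_R}$, so everything reduces to bounding the arc. The genuine difficulty is that one cannot feed $[J_0]^m$ into Jordan's lemma directly: since $J_0=(H_0^{(1)}+H_0^{(2)})/2$ and $H_0^{(2)}(z)\sim\sqrt{2/(\pi z)}\,e^{-i(z-\pi/4)}$ grows like $e^{\I z}$ in the upper half-plane, the factor $[J_0(z)]^m$ is exponentially large there.

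To circumvent this I would expand binomially,
\begin{align}
[J_0^{\vphantom{(1)}}(z)]^m[H_0^{(1)}(z)]^n=\frac{1}{2^m}\sum_{k=0}^{m}\binom{m}{k}[H_0^{(1)}(z)]^{k+n}[H_0^{(2)}(z)]^{m-k},\notag
\end{align}
which separates the growing and decaying constituents. Writing $a=k+n$ and $b=m-k$, each monomial has $a+b=m+n$ fixed while $a-b=2k+n-m$. Feeding in the large-argument asymptotics of $H_0^{(1)}$ and $H_0^{(2)}$ recorded in \S\ref{subsec:HankelH1H2}, I obtain
\begin{align}
[H_0^{(1)}(z)]^{a}[H_0^{(2)}(z)]^{b}z^\ell=(\text{const})\,z^{\ell-(m+n)/2}e^{i(a-b)z}\left[1+O\!\left(\tfrac1{|z|}\right)\right]\notag
\end{align}
as $|z|\to\infty$ in the upper half-plane. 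The whole problem thus decouples into a sum of elementary oscillatory/algebraic arc integrals indexed by $k$, and the two hypotheses of the lemma are exactly what is needed to annihilate each one.

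Finally I would estimate each arc integral according to the sign of $a-b=2k+n-m$. When $a-b>0$ the factor $e^{i(a-b)z}=e^{i(a-b)\R z}e^{-(a-b)\I z}$ decays exponentially into the upper half-plane, and the standard Jordan bound $\int_0^\pi e^{-\mu R\sin\theta}\,\D\theta\leq\pi/(\mu R)$ shows that the arc integral is $O\big(R^{\ell-(m+n)/2}\big)\to0$ precisely when $\ell-(m+n)/2<0$. In the first case $m<n$ one has $a-b=2k+n-m\geq n-m>0$ for every $k$, so every monomial decays and the single condition $\ell-(m+n)/2<0$ suffices. In the second case $m=n$ the term $k=0$ has $a-b=0$ and carries no exponential damping; there the integrand is purely algebraic of size $R^{\ell-n}$, and the crude bound $|\int_{C_R}|\leq\pi R\cdot R^{\ell-n}$ forces the stronger hypothesis $\ell-n=\ell-m<-1$, while the remaining terms $k\geq1$ (with $a-b=2k\geq2$) are again dispatched by Jordan's lemma. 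In both cases $\lim_{R\to\infty}\int_{i\varepsilon-R}^{i\varepsilon+R}=0$ for every fixed $\varepsilon>0$, so the outer limit $\varepsilon\to0^+$ is trivially zero as well. The step I expect to demand the most care is the uniformity of both the asymptotic expansion and the Jordan estimate as the endpoints of $C_R$ approach the real axis ($\arg z\to0^+$ and $\arg z\to\pi^-$), together with verifying that the $O(1/|z|)$ error terms produced by the expansion contribute strictly lower-order arc integrals that vanish under the very same hypotheses.
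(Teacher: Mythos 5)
Your proof is correct and takes essentially the same route as the paper's: the paper closes the contour in the upper half-plane and appeals to Jordan's lemma after asserting that the integrand is $O\bigl(z^{\ell-(m+n)/2}e^{i(n-m)z}\bigr)$ there, which is precisely the envelope your binomial expansion yields (your $k=0$ monomial $[H_0^{(1)}]^n[H_0^{(2)}]^m$ being the dominant term, and the $m=n$ case handled by the crude length-times-sup bound exactly as you do). Your decomposition into Hankel monomials simply makes explicit the one-line asymptotic estimate that the paper states without derivation.
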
\begin{proof}As the integrand goes asymptotically like $ O(z^{\ell-(m+n)/2}e^{i(n-m)z})$ for $ \I z>0,|z|\to\infty$, we can  close the contour in the upper half-plane with the help of Jordan's lemma.\end{proof}\begin{remark}Noting \eqref{eq:H1_x} and $J_0(-x)=J_0(x) $, we may reformulate \eqref{eq:BHJ} as\begin{align}
\int_0^\infty [J_{0}(x)]^m\left\{ [J_{0}(x)+i Y_0(x)]^{n}+(-1)^\ell [-J_{0}(x)+i Y_0(x)]^{n}\right\}x^\ell\D x=0,\label{eq:JY_BHJ}\tag{\ref{eq:BHJ}$'$}
\end{align}which is a more convenient form to be used later.\eor\end{remark}

In addition to  closing the contour upwards (Lemma \ref{lm:BHJ}), sometimes we also need to turn the contour 90$^\circ$  clockwise, from the positive imaginary axis to the positive real axis. This trick is known as Wick rotation in QFT.
Instead of stating and justifying the general procedures for Wick rotations, we illustrate with a concrete example that relates $ \IKM(1,4;1)$ to a well-studied integral in probability theory.
\begin{theorem}[``Tiny nome of Bologna'']\label{thm:Bologna}We have \begin{align}
\int_{0}^\infty I_0(t)[K_0(t)]^4t\D t={}&\frac{\pi^{4}}{30}\int_0^\infty [J_0(x)]^5 x\D x=\frac{\Gamma \left(\frac{1}{15}\right) \Gamma \left(\frac{2}{15}\right) \Gamma \left(\frac{4}{15}\right) \Gamma \left(\frac{8}{15}\right)}{240 \sqrt{5}}.\label{eq:Bologna'}\tag{\ref{eq:Bologna}$'$}
\end{align}\end{theorem}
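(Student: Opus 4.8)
I would prove the two equalities asserted in Theorem~\ref{thm:Bologna} separately: the first is an analytic bridge built by a Wick rotation together with the Bessel--Hankel--Jordan cancelation formulae, while the second imports the closed-form evaluation of the five-step random walk integral.

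For the first equality I start from the conversion formulae $J_0(ix)=I_0(x)$ and $\tfrac{\pi i}{2}H_0^{(1)}(ix)=K_0(x)$ recorded in \S\ref{subsec:HankelH1H2}. Since $\bigl(\tfrac{\pi i}{2}\bigr)^4=\tfrac{\pi^4}{16}$, substituting these into the left integrand turns $\int_0^\infty I_0(t)[K_0(t)]^4 t\,\D t$ into $\tfrac{\pi^4}{16}\int_0^\infty J_0(it)[H_0^{(1)}(it)]^4 t\,\D t$. Putting $z=it$ (so $t\,\D t=-z\,\D z$) rewrites this as $-\tfrac{\pi^4}{16}\int_0^{i\infty}J_0(z)[H_0^{(1)}(z)]^4 z\,\D z$, the contour now running up the positive imaginary axis. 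Next comes the Wick rotation proper: I rotate the ray clockwise onto the positive real axis by closing the quarter-circle in the first quadrant, where $J_0$ and $H_0^{(1)}$ are holomorphic, so Cauchy's theorem applies and no residues intervene. The arc contribution must be shown to vanish; writing $J_0=\tfrac12(H_0^{(1)}+H_0^{(2)})$ and using the asymptotics of \S\ref{subsec:HankelH1H2}, the integrand on the arc is $O\bigl(|z|^{-3/2}e^{-3\I z}\bigr)$, which a Jordan-type estimate kills as $R\to\infty$. This identifies the imaginary-axis integral with $\int_0^\infty J_0(x)[H_0^{(1)}(x)]^4 x\,\D x$.

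It then remains to express $\int_0^\infty J_0[H_0^{(1)}]^4 x\,\D x$ through $\JYM(5,0;1)=\int_0^\infty[J_0]^5 x\,\D x$. Expanding $[H_0^{(1)}]^4=(J_0+iY_0)^4$ and taking real parts (the underlying quantity is real) leaves $\JYM(5,0;1)-6\JYM(3,2;1)+\JYM(1,4;1)$. Here Lemma~\ref{lm:BHJ}, in the reformulated shape \eqref{eq:JY_BHJ} with $\ell=1$, is the workhorse: the choice $(m,n)=(2,3)$ gives $\JYM(5,0;1)=3\JYM(3,2;1)$, and $(m,n)=(0,5)$ gives $\JYM(5,0;1)-10\JYM(3,2;1)+5\JYM(1,4;1)=0$. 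Solving yields $\JYM(3,2;1)=\tfrac13\JYM(5,0;1)$ and $\JYM(1,4;1)=\tfrac{7}{15}\JYM(5,0;1)$, so the real part collapses to $-\tfrac{8}{15}\JYM(5,0;1)$. The Jacobian sign and the sum-rule sign cancel, and the surviving constant is $\tfrac{\pi^4}{16}\cdot\tfrac{8}{15}=\tfrac{\pi^4}{30}$, which is precisely the first equality.

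For the second equality I invoke the random-walk interpretation. By Kluyver's formula the density of the distance travelled by a planar five-step unit walk is $p_5(x)=x\int_0^\infty J_0(xt)[J_0(t)]^5 t\,\D t$, so letting $x\to0^+$ (justified through the oscillatory decay of the integrand) identifies $\JYM(5,0;1)$ with the slope $p_5'(0)$. The evaluation of this slope as $\tfrac{1}{8\sqrt5\,\pi^4}\Gamma(\tfrac{1}{15})\Gamma(\tfrac{2}{15})\Gamma(\tfrac{4}{15})\Gamma(\tfrac{8}{15})$ is the arithmetically deep ingredient, available from the short-walk analysis of Borwein--Straub--Wan--Zudilin and Borwein--Straub--Vignat \cite{BSWZ2012,BSV2016}; inserting it into $\tfrac{\pi^4}{30}\JYM(5,0;1)$ produces the stated gamma quotient $\tfrac{1}{240\sqrt5}\Gamma(\tfrac{1}{15})\Gamma(\tfrac{2}{15})\Gamma(\tfrac{4}{15})\Gamma(\tfrac{8}{15})$. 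I expect the genuine obstacle to lie exactly here rather than in the contour manipulation: the Wick rotation and the Bessel--Hankel--Jordan reductions are elementary once the asymptotics of \S\ref{subsec:HankelH1H2} are in hand, whereas the gamma-product evaluation of $\JYM(5,0;1)$ rests on the hypergeometric and $K3$ structure of the short random walk and is the part that cannot be obtained from Jordan's lemma and linear algebra alone.
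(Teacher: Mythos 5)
Your proposal is correct and takes essentially the same route as the paper: the identical Wick rotation (with the arc estimate Jordan's lemma supplies), then Lemma~\ref{lm:BHJ} in the form \eqref{eq:JY_BHJ} with $\ell=1$ and $(m,n)\in\{(2,3),(0,5)\}$, where your linear system yielding $\JYM(3,2;1)=\tfrac13\JYM(5,0;1)$ and $\JYM(1,4;1)=\tfrac{7}{15}\JYM(5,0;1)$ is just a rearrangement of the paper's single algebraic identity reducing $J(J^4-6J^2Y^2+Y^4)$ to $-\tfrac{8}{15}J^5$. The closing gamma-product evaluation of $\JYM(5,0;1)$ is likewise imported from \cite[(5.2)]{BSWZ2012}, exactly as the paper does.
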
\begin{proof}Thanks to  Jordan's lemma, we can deform the contour  in\begin{align}\left(\frac2\pi\right)^4
\int_{0}^\infty I_0(t)[K_0(t)]^4t\D t=-\R\int_{0}^{i\infty} J_0^{\vphantom{(1)}}(z)[H_0^{(1)}(z)]^4 z\D z,
\end{align}and identify it with its ``Wick-rotated'' counterpart:\begin{align}
-\R\int_{0}^{\infty} J_0^{\vphantom{(1)}}(x)[H_0^{(1)}(x)]^4 x\D x=-\int_{0}^\infty J(J^4-6 J^2 Y^2+ Y^4)x\D x,
\end{align}where $ J$ (resp.~$Y$) stands for $J_0(x)$ (resp.~$ Y_0(x)$) in the last expression. Now that \begin{align}
J(J^4-6 J^2 Y^2+ Y^4)-\frac{2J^2}{3}  [(J+i Y)^3-(-J+i Y)^3]-\frac{(J+i Y)^5-(-J+i Y)^5}{10} =-\frac{8 J^5}{15} ,
\end{align}we can verify the first equality in \eqref{eq:Bologna'}, while referring back to \eqref{eq:JY_BHJ} in Lemma \ref{lm:BHJ}.

The ``random walk integral'' $ \int_0^\infty [J_0(x)]^5 x\D x$ has been thoroughly studied by  Borwein and coworkers \cite{BSWZ2012}. One can evaluate this integral through a special value of a modular form (to be elaborated later in \S\ref{subsec:RW_mod_form}). Here, we simply point out that the second equality in \eqref{eq:Bologna'} can be directly deduced from \cite[][(5.2)]{BSWZ2012}.
 \end{proof}
\begin{remark}We pause to give a brief account for the history of the integral identity in  \eqref{eq:Bologna}. The closed-form evaluation in \eqref{eq:Bologna} was initially proposed by Broadhurst in the form of elliptic theta functions \cite[][(93)]{Bailey2008}, and the current (equivalent) form involving products of gamma functions was suggested by  Laporta \cite[][(7), (16), (17)]{Laporta2008}.
Bloch--Kerr--Vanhove studied the momentum space reformulation of  $ \IKM(1,4;1)$ as a triple integral of an algebraic function over the first octant:\begin{align}
\IKM(1,4;1)=\frac{1}{8}\int_0^\infty\frac{\D X}{X}\int_0^\infty\frac{\D Y}{Y}\int_0^\infty\frac{\D Z}{Z}\frac{1}{(1+X+Y+Z)(1+X^{-1}+Y^{-1}+Z^{-1})-1},
\end{align} with a \textit{tour de force}      in motivic cohomology. They effectively verified   \eqref{eq:Bologna} by casting $ \IKM(1,4;1)$ into $ \frac{\pi ^3}{8 \sqrt{15}}\frac{[\eta(z)\eta(3z)]^{4}}{[\eta(2z)\eta(6z)]^{2}}$ for $ z=\frac{3+i\sqrt{15}}{6}$ \cite[][(2.5.9)]{BlochKerrVanhove2015}.  Drawing on a result of Rogers--Wan--Zucker \cite[][Theorem 5]{RogersWanZucker2015}, Samart reanalyzed the triple integral formulation of   $ \IKM(1,4;1)$, before finally expressing $ \IKM(1,4;1)$ as explicit gamma factors, and identifying it with a special $L$-value $\frac{3\pi}{2\sqrt{15}}L(f_{3,15},2) $  for the modular form $ f_{3,15}(z)=[\eta(3z)\eta(5z)]^3+[\eta(z)\eta(15z)]^3$ \cite[(35)]{Samart2016}.   \eor\end{remark}
\begin{remark}In \cite[][\S5]{BSWZ2012}, the authors remarked on the uncanny resemblance of the ``random walk integral''  $ \int_0^\infty [J_0(x)]^5 x\D x$ to the ``tiny nome of Bologna'', without supplying a mechanistic interpretation later afterwards. Moreover, these authors recorded \cite[][Remark 7.3]{BSWZ2012}\begin{align}\frac{4}{\pi^3}\int_0^\infty [K_0(t)]^3\D t=\int_0^\infty[J_0(x)]^3\D x\end{align} and  \cite[][between Theorems 7.6 and 7.7]{BSWZ2012}\begin{align}
\frac{4}{\pi^3}\int_0^\infty I_0(t)[K_0(t)]^3\D t=\int_0^\infty[J_0(x)]^4\D x
\end{align}after comparing explicit expressions of all the integrals in question, probably unaware that such equalities would follow easily from a Wick rotation and an application of Lemma \ref{lm:BHJ} above.
   \eor\end{remark}

\section{Feynman diagrams with 5 Bessel factors\label{sec:5Bessel}}
\subsection{A modular form associated with Bessel moments\label{subsec:RW_mod_form}}
In this paper, we will mainly deal with    modular forms of  level 6, which respect the symmetries in the Hecke congruence group\begin{align}
\varGamma_0(6):=\left\{ \left.\begin{pmatrix}a & b \\
c & d \\
\end{pmatrix}\right|a,b,c,d\in\mathbb Z,ad-bc=1,c \equiv0\,(\!\bmod6)\right\}.
\end{align}Furthermore, following the notation of Chan--Zudilin \cite{ChanZudilin2010}, we write $ \widehat W_3=\frac{1}{\sqrt{3}}\left(\begin{smallmatrix}3&-2\\6&-3\end{smallmatrix}\right)$ and construct a group $ \varGamma_0(6)_{+3}=\langle\varGamma_0(6),\widehat W_3\rangle$  by adjoining $ \widehat W_3$ to $ \varGamma_0(6)$.
To set the stage for  later developments  in this article, we present some characteristics of a modular function on $ \varGamma_0(6)_{+3}$.\begin{lemma}[A modular function of level 6]\label{lm:X63}The function $ X_{6,3}(z):=\left[\frac{ \eta (2z ) \eta (6 z )}{\eta (z ) \eta (3z)}\right]^{6},z\in\mathfrak H$ has the following properties:
\begin{align}\left\{ \begin{array}{r@{\;=\;}l@{ \quad \text{if}\quad}r@{\;\in\;}l}
X_{6,3}\left( \tfrac{az+b}{cz+d} \right)&X_{6,3}(z),&\left(\begin{smallmatrix}a&b\\c&d\end{smallmatrix}\right)  &\varGamma_0(6)_{+3};\\
\I X_{6,3}(z)&0,&2\R z&\mathbb Z;\\
X_{6,3}\left( \tfrac12+\tfrac{iy}{2\sqrt{3}} \right)&X_{6,3}\left( \tfrac12+\tfrac{i}{2\sqrt{3}y} \right),&y&(0,\infty).
\end{array} \right.
\label{eq:X63_symm}
\end{align}
Moreover, the following mappings \begin{align}\left\{ \begin{array}{r@{\,:\,}r@{\;\longrightarrow\;}l}X_{6,3}&\{z|\R z=0,\I z>0\}&(0,\infty)\\X_{6,3}&\left\{z\left|\R z=\tfrac12,\I z>\tfrac{1}{2\sqrt{3}}\right.\right\}&\left(-\tfrac1{16},0\right)\end{array} \right.\label{eq:X63_para}
\end{align}are bijective.\end{lemma}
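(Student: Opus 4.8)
The plan is to treat $X_{6,3}$ as a Hauptmodul for the genus-zero group $\varGamma_0(6)_{+3}$ and to read off all five assertions from its behaviour at the cusps and on the two boundary geodesics $\R z=0$ and $\R z=\tfrac12$. First I would establish the modular invariance in the top line of \eqref{eq:X63_symm}. Since $X_{6,3}$ is the sixth power of the eta quotient with exponent vector $(r_1,r_2,r_3,r_6)=(-6,6,-6,6)$, I would verify the Ligozat--Newman conditions: the weight $\tfrac12\sum_d r_d=0$ vanishes, both $\sum_d d\,r_d=24$ and $\sum_d(6/d)\,r_d=-24$ are divisible by $24$, and $\prod_d d^{r_d}=2^{12}$ is a perfect square. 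This certifies $X_{6,3}$ as a weight-zero modular function on $\varGamma_0(6)$, holomorphic and non-vanishing on $\mathfrak H$ with zeros and poles confined to the cusps. Invariance under the Atkin--Lehner involution $\widehat W_3$ then follows because $\widehat W_3$ interchanges the eta factors in the conjugate pairs $\eta(z)\leftrightarrow\eta(3z)$ and $\eta(2z)\leftrightarrow\eta(6z)$ up to automorphy factors that cancel in the quotient; alternatively this is recorded by Chan--Zudilin \cite{ChanZudilin2010}.

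For the middle line of \eqref{eq:X63_symm}, I would use the reflection identity $\overline{\eta(z)}=\eta(-\bar z)$, immediate from the real $q$-coefficients in \eqref{eq:eta_defn}, to obtain $\overline{X_{6,3}(z)}=X_{6,3}(-\bar z)$. When $2\R z=k\in\mathbb Z$ one has $-\bar z=z-k$, and the $\varGamma_0(6)$-periodicity $X_{6,3}(z+1)=X_{6,3}(z)$ forces $X_{6,3}(-\bar z)=X_{6,3}(z)$, so $X_{6,3}(z)\in\mathbb R$. The bottom line of \eqref{eq:X63_symm} is then a one-line consequence of $\widehat W_3$-invariance: a direct substitution gives $\widehat W_3\bigl(\tfrac12+\tfrac{iy}{2\sqrt3}\bigr)=\tfrac12+\tfrac{i}{2\sqrt3\,y}$, whence the two values of $X_{6,3}$ coincide.

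It remains to prove the two bijections in \eqref{eq:X63_para}, which I expect to be the main obstacle. The key inputs are the boundary values coming from the cusp expansions. The expansion $X_{6,3}(z)=q+O(q^2)$ with $q=e^{2\pi iz}$ exhibits a simple zero at $z=i\infty$, so $X_{6,3}\to0$ as $\I z\to\infty$; the Ligozat order formula gives a simple pole at the cusp $0$, so $X_{6,3}(iy)\to\infty$ as $y\to0^+$. Since $X_{6,3}$ is real with no interior zeros on the imaginary axis and is positive near $i\infty$, it stays positive throughout, giving the range $(0,\infty)$. On $\R z=\tfrac12$ one has $q=-e^{-2\pi\I z}<0$, so $X_{6,3}\to0^-$ as $\I z\to\infty$, while the point $z_0=\tfrac12+\tfrac{i}{2\sqrt3}$ is the order-two elliptic fixed point of $\widehat W_3$ on this line, at which $X_{6,3}$ attains the critical value $-\tfrac1{16}$. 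Because $X_{6,3}$ is a Hauptmodul, it restricts to a univalent, hence strictly monotone, real-valued map on each boundary geodesic, so continuity together with these endpoint values yields the bijections onto $(0,\infty)$ and $(-\tfrac1{16},0)$ respectively; the restriction $\I z>\tfrac1{2\sqrt3}$ in the second map removes the fold at $z_0$, where the local square-root behaviour of the Hauptmodul near an elliptic point would otherwise destroy injectivity.

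The delicate points, which I would dispatch by explicit computation, are the evaluation of the critical value $X_{6,3}(z_0)=-\tfrac1{16}$ and the verification that $X_{6,3}$ is genuinely a Hauptmodul for $\varGamma_0(6)_{+3}$, equivalently that this group has genus zero and that the two geodesics above are boundary arcs of a fundamental domain. Both facts are available from the Chan--Zudilin analysis of level-$6$ modular functions \cite{ChanZudilin2010}, which I would invoke rather than rederive.
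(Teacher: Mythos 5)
Your proposal is correct and takes essentially the same route as the paper: modular invariance via the Hauptmodul property from Chan--Zudilin \cite{ChanZudilin2010}, reality on the lines $2\R z\in\mathbb Z$ from the eta product, the direct computation $\widehat W_3\bigl(\tfrac12+\tfrac{iy}{2\sqrt3}\bigr)=\tfrac12+\tfrac{i}{2\sqrt3\,y}$, and bijectivity from injectivity on (the boundary of) a fundamental domain combined with monotonicity of a continuous real-valued injection and the boundary values at the endpoints. The additional details you supply---the Ligozat--Newman verification of $\varGamma_0(6)$-invariance, the explicit cusp limits, and the critical value $X_{6,3}\bigl(\tfrac12+\tfrac{i}{2\sqrt3}\bigr)=-\tfrac1{16}$ at the elliptic fixed point of $\widehat W_3$ (which is correct and neatly explains the restriction $\I z>\tfrac1{2\sqrt3}$)---merely make explicit what the paper's proof leaves implicit in its phrase ``boundary values at the extreme points.''
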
\begin{proof}The function $ X_{6,3}$ is a Hauptmodul of $ \varGamma_0(6)_{+3}$ with genus 0 \cite[][(2.2)]{ChanZudilin2010}, so it must satisfy the modular invariance relation, as displayed  in the first line of  \eqref{eq:X63_symm}. To prove the second line in   \eqref{eq:X63_symm}, use the infinite product expansion for the Dedekind eta function in \eqref{eq:eta_defn}.
To prove the last line in \eqref{eq:X63_symm},  note that \begin{align}
\widehat W_3z=\frac{3z-2}{6z-3}=\frac12+\frac{i}{2\sqrt{3}y}\quad \text{for }z=\frac12+\frac{iy}{2\sqrt{3}}.
\end{align}

The domains of the mappings in \eqref{eq:X63_para} are proper subsets of the fundamental domain for  $ \varGamma_0(6)_{+3}$, so these mappings are necessarily injective. Furthermore, by the second line in   \eqref{eq:X63_symm}, these mappings are continuous real-valued functions defined on path-connected sets, so these   injective mappings must also be monotone along the respective paths, and their continuous images are also path-connected. Consequently, the modular function $ X_{6,3}$ induces bijective mappings from these two domains to their respective  ranges, and the extent of the latter is inferred from the ``boundary values'' of the function $ X_{6,3}$ at the extreme points of the domains of definition.       \end{proof}

As a demonstration for the relevance of modularity in our studies of Bessel moments, we recall some known results from \cite{Rogers2009,BSWZ2012}, in slightly reorganized form.
In particular, we will use the Chan--Zudilin notation    $ Z_{6,3}(z)=\frac{[\eta(z)\eta(3z)]^{4}}{[\eta(2z)\eta(6z)]^{2}}$     \cite[][(2.5)]{ChanZudilin2010} for a  modular form of weight 2 on $\varGamma_0(6)_{+3} $.

\begin{proposition}[Bessel moments as modular forms]\label{prop:BM_mod}For $ z/i>0$, we have\begin{align}
\int_0^\infty J_{0}\left(\left[\frac{2 \eta (2z ) \eta (6 z )}{\eta (z ) \eta (3z)}\right]^{3}t\right)I_0(t)[K_0(t)]^3t\D t={}&\frac{\pi ^2}{16}Z_{6,3}(z),\label{eq:IKKK_Hankel_eta}
\intertext{which gives a modular parametrization of $ \int_0^\infty J_{0}(xt)I_0(t)[K_0(t)]^3t\D t$ for $x>0 $. For $ z=\frac12+iy,y>\frac{1}{2\sqrt{3}}$, we have}\int_0^\infty I_{0}\left(\frac{1}{i}\left[\frac{2 \eta (2z ) \eta (6 z )}{\eta (z ) \eta (3z)}\right]^{3}t\right)I_0(t)[K_0(t)]^3t\D t={}&\frac{\pi ^2}{16}Z_{6,3}(z) \label{eq:IKKK_I}\intertext{and}
\int_0^\infty J_{0}\left(\frac{1}{i}\left[\frac{2 \eta (2z ) \eta (6 z )}{\eta (z ) \eta (3z)}\right]^{3}t\right)[J_0(t)]^4t\D t={}&\frac{3(2z-1)}{4\pi i}Z_{6,3}(z),\label{eq:JJJJ_Hankel_eta}
\end{align}which give modular parametrizations of  $ \int_0^\infty I_{0}(xt)I_0(t)[K_0(t)]^3t\D t$  and $ \int_0^\infty J_{0}(xt)[J_0(t)]^4t\D t$ for $x\in(0,2) $.
\end{proposition}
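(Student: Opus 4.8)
The plan is to treat all three formulae as instances of a single assertion about the order-zero Hankel transform \(F_f(x):=\int_0^\infty J_0(xt)\,f(t)\,t\,\D t\) of a four-fold product of Bessel factors: \(f=I_0K_0^3\) in \eqref{eq:IKKK_Hankel_eta}--\eqref{eq:IKKK_I} and \(f=[J_0]^4\) in \eqref{eq:JJJJ_Hankel_eta}. First I would record the bookkeeping of the argument appearing on the left-hand sides, namely \(\left[\frac{2\eta(2z)\eta(6z)}{\eta(z)\eta(3z)}\right]^3=8\,X_{6,3}(z)^{1/2}\). By Lemma \ref{lm:X63}, along \(\R z=0\) the Hauptmodul \(X_{6,3}\) sweeps \((0,\infty)\) bijectively, so this argument is real and runs over \((0,\infty)\); along \(\R z=\tfrac12\) one has \(X_{6,3}\in\left(-\tfrac1{16},0\right)\), so the argument becomes purely imaginary and \(\tfrac1i\cdot 8X_{6,3}^{1/2}=8|X_{6,3}|^{1/2}\) runs over \((0,2)\). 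This is exactly why the kernel turns from \(J_0\) into \(I_0\) and why the admissible range of the transform variable is \((0,2)\). The proof then reduces to (i) identifying \(F_f\) as a solution of a fixed linear ODE in \(x\), (ii) pulling that ODE back along \(x=8X_{6,3}^{1/2}\) to a modular equation solved by \(Z_{6,3}\), and (iii) matching normalisations.

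For step (i) I would use that \(I_0\), \(K_0\) and \(J_0\) all solve the order-zero (modified) Bessel equation, so any four-fold product is annihilated by its symmetric fourth power. Under the intertwining \(\mathcal B_t J_0(xt)=-x^2 J_0(xt)\) with \(\mathcal B_t=\partial_t^2+t^{-1}\partial_t\), integration by parts transports this to a linear ODE in \(x\) for \(F_f\); since the Bessel operator is the same regardless of which solutions are multiplied, both \(F_{I_0K_0^3}\) and \(F_{[J_0]^4}\) obey the \emph{same} equation. The relevant reduced operator is the third-order Picard--Fuchs operator that Borwein and coworkers \cite{BSWZ2012} extracted for the four-step random-walk density, i.e.\ the symmetric square of the weight-two period operator attached to \(Z_{6,3}\); cf.\ also \cite{Rogers2009}. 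Its solution space is three-dimensional, spanned in the period picture \(Z_{6,3}=\omega^2\) by \(\omega^2,\,z\omega^2,\,z^2\omega^2\), which after the substitution become \(Z_{6,3},\,zZ_{6,3},\,z^2Z_{6,3}\). This already explains the two shapes of right-hand side: the constant multiple \(\tfrac{\pi^2}{16}Z_{6,3}\) is the pure period-squared solution selected by \(I_0K_0^3\), while the affine multiple \(\tfrac{3(2z-1)}{4\pi i}Z_{6,3}=\tfrac{3}{4\pi i}\bigl(2zZ_{6,3}-Z_{6,3}\bigr)\) is the mixed-period (Eichler) solution selected by \([J_0]^4\).

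Step (ii) is where I expect the substantive work, and it is the main obstacle: I must verify that the pulled-back hypergeometric solution of the third-order ODE coincides with \(Z_{6,3}(z)\) exactly, not merely up to an undetermined factor. I would do this by invoking the explicit modular evaluations in \cite{Rogers2009,BSWZ2012} for the four-step density and its Wick-rotated analogue, recast in the Chan--Zudilin variables \(X_{6,3},Z_{6,3}\); the level-\(6\), weight-\(2\) bookkeeping together with the expansions \(Z_{6,3}=1+O(q)\) and \(X_{6,3}=O(q)\) as \(z\to i\infty\) then pins down both the modular form and the constant \(\tfrac{\pi^2}{16}\) via the boundary value \(\lim_{x\to0^+}F_{I_0K_0^3}(x)=\int_0^\infty I_0(t)[K_0(t)]^3t\,\D t\). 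For \eqref{eq:JJJJ_Hankel_eta} the prefactor \(\tfrac{3(2z-1)}{4\pi i}\) is fixed by the same mechanism applied to the known small-\(x\) behaviour of the density in \cite{BSWZ2012}, the linear-in-\(z\) term recording the quasi-period. The delicate point is confirming that \([J_0]^4\) selects exactly the Eichler solution rather than an admixture of the pure-period one; I would settle this by a Wick rotation, using Lemma \ref{lm:BHJ} in the form \eqref{eq:JY_BHJ} to rewrite \(I_0K_0^3=-\tfrac{\pi^3 i}{8}J_0(J_0+iY_0)^3\) after rotation, so that \eqref{eq:IKKK_I} and \eqref{eq:JJJJ_Hankel_eta} emerge as the real and imaginary components of one Hankel identity.

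Finally, step (iii) for \eqref{eq:IKKK_I} is pure analytic continuation. Both sides of \eqref{eq:IKKK_Hankel_eta} are real-analytic in the Hauptmodul \(X_{6,3}\), and the bijections and monotonicity of Lemma \ref{lm:X63} let me continue from \(X_{6,3}\in(0,\infty)\) (kernel \(J_0\)) across the cusp \(X_{6,3}=0\) into \(X_{6,3}\in\left(-\tfrac1{16},0\right)\) (kernel \(I_0\)). The continuation is legitimate because the defining integral still converges absolutely for \(|x|<2\): the large-\(t\) asymptotics \(I_0(|x|t)I_0(t)[K_0(t)]^3\sim e^{(|x|-2)t}t^{-5/2}\) require precisely \(|x|<2\), which is the \((0,2)\) constraint already identified in the first paragraph.
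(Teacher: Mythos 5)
Your framework (third-order ODE, three-dimensional solution space $Z_{6,3},zZ_{6,3},z^{2}Z_{6,3}$, normalisation matching, analytic continuation across the cusp) is the right kind of argument and your step (iii) agrees with the paper, but the mechanism you propose for the ``delicate point'' fails. Expanding $J(J+iY)^{3}=(J^{4}-3J^{2}Y^{2})+i\,(3J^{3}Y-JY^{3})$, the real and imaginary components of your single rotated Hankel identity are, respectively, the transform of $J^{4}-3J^{2}Y^{2}$ --- which vanishes identically for $x\in[0,2]$ (this is exactly \eqref{eq:JJJJ_JJYY_Hankel_Wick}) --- and the transform of $3J^{3}Y-JY^{3}$, which, after the companion vanishing lemma valid for $x\in[0,4]$, ties to $\int_{0}^{\infty}I_{0}(xt)I_{0}(t)[K_{0}(t)]^{3}t\,\D t$ as in \eqref{eq:IIKKK_JJJJY_Wick}. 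The pure $[J_{0}]^{4}$ transform never appears in isolation: it enters only through the combination $J^{4}-3J^{2}Y^{2}$, whose transform is zero and carries no information. In particular, no Wick rotation of the pure-period identity \eqref{eq:IKKK_I} can generate the quasi-period term proportional to $zZ_{6,3}(z)$ in \eqref{eq:JJJJ_Hankel_eta}, so \eqref{eq:IKKK_I} and \eqref{eq:JJJJ_Hankel_eta} do \emph{not} arise as real and imaginary parts of one identity. To exclude an admixture of solutions you must fall back on the asymptotic matching you mention only in passing --- the coefficient of $\log x$, the constant term, and the absence of $\log^{2}x$ in $p_{4}(x)/x$ as $x\to0^{+}$, from the asymptotics in \cite[Theorem 4.1]{BSWZ2012} --- at which point you have re-derived \cite[(4.16)]{BSWZ2012}, which is precisely what the paper cites for \eqref{eq:JJJJ_Hankel_eta}.

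There is a second, smaller gap in your step (i): the assertion that ``since the Bessel operator is the same regardless of which solutions are multiplied,'' every fourfold product yields the same homogeneous equation proves too much. By that reasoning $\int_{0}^{\infty}I_{0}(xt)[K_{0}(t)]^{4}xt\,\D t$ would also be annihilated by $\widehat A_{4}$, yet it satisfies the \emph{inhomogeneous} equation \eqref{eq:A4_inhom}: homogeneity depends on boundary terms in the integrations by parts, which vanish for $I_{0}K_{0}^{3}$ but not for $K_{0}^{4}$. Moreover the symmetric fourth power of the Bessel operator has order five, and the descent to the third-order operator \eqref{eq:op_A4} needs input specific to the chosen product --- e.g.\ that the odd moments $\IKM(1,3;2n+1)$ obey the Domb recursion. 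The paper avoids both issues by a different route for \eqref{eq:IKKK_Hankel_eta}: the explicit Domb-number moments \eqref{eq:IKKK_Domb}, termwise summation to a $_{3}F_{2}$, Rogers' transformation \eqref{eq:Rogers2009}, and the Chan--Zudilin modular parametrization, followed by the same analytic continuation to \eqref{eq:IKKK_I} that you describe. Your ODE route is salvageable, but only after you verify the vanishing of the boundary terms (or the moment recursion) for $I_{0}K_{0}^{3}$ and replace the faulty rotation argument by the full asymptotic matching for $[J_{0}]^{4}$.
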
\begin{proof}We   recall   from  \cite[][(55) and (56)]{Bailey2008} the following formula\begin{align}
\int_0^\infty I_0(t)[K_0(t)]^3t^{2n+1}\D t=\frac{\pi^{2}}{16}\left( \frac{n!}{4^{n}} \right)^2\sum^{n}_{k=0}{n\choose k}^2{2(n-k)\choose n-k}{2k\choose k}=\frac{\pi^{2}}{16}\left( \frac{n!}{4^{n}} \right)^2D_{n},\label{eq:IKKK_Domb}
\end{align}where   ${m\choose j}=\frac{m!}{j!(m-j)!}$ and $D_n$ is the $n$th Domb number. Meanwhile, we note that Rogers has shown   in \cite[][Theorem 3.1]{Rogers2009} that  \begin{align}
_3F_2\left(\left.\begin{array}{c}
\frac{1}{3},\frac{1}{2},\frac{2}{3} \\1,1 \\
\end{array}\right|\frac{27u^{2}}{4(1-u)^{3}}\right)=(1-u)\sum_{n=0}^\infty \frac{D_n}{4^n}u^n\label{eq:Rogers2009}
\end{align}holds for $|u|$ sufficiently small, where  \begin{align}{_pF_q}\left(\left.\begin{array}{c}
a_{1},\dots,a_p \\[4pt]
b_{1},\dots,b_q \\
\end{array}\right| x\right):=1+\sum_{n=1}^\infty\frac{\prod_{j=1}^p\frac{\Gamma(a_{j}+n)}{\Gamma(a_{j})}}{\prod_{k=1}^q\frac{\Gamma(b_k+n)}{\Gamma(b_k)} }\frac{x^n}{n!}.\label{eq:defn_pFq}\end{align} By termwise summation, we see that \begin{align}
\int_0^\infty J_{0}(xt)I_0(t)[K_0(t)]^3t\D t=\frac{\pi^{2}}{16+x^{2}}
{_3F_2}\left(\left.\begin{array}{c}
\frac{1}{3},\frac{1}{2},\frac{2}{3} \\1,1 \\
\end{array}\right|\frac{108x^{4}}{(16+x^{2})^{3}}\right)
\end{align}is valid for $x$ sufficiently small.    Parametrizing the right-hand side of the equation above with modular forms (see \cite[][(2.8)]{ChanZudilin2010} or  \cite[][(4.13)]{BSWZ2012}), we observe that \eqref{eq:IKKK_Hankel_eta} holds when $\I z$ is  sufficiently large and positive. By analytic continuation, the validity of  \eqref{eq:IKKK_Hankel_eta}  extends to the entire positive $\I z$-axis, from which $ x=\left[\frac{2 \eta (2z ) \eta (6 z )}{\eta (z ) \eta (3z)}\right]^{3}$ maps bijectively to $x\in(0,\infty)$.

Performing further analytic continuation on   \eqref{eq:IKKK_Hankel_eta}, we arrive at \eqref{eq:IKKK_I}.   Here, according to Lemma \ref{lm:X63}, we know that $ x=\frac{1}{i}\left[\frac{2 \eta (2z ) \eta (6 z )}{\eta (z ) \eta (3z)}\right]^{3}$ maps  $ y\in\left(\frac{1}{2\sqrt{3}},\infty\right)$ bijectively to $ x\in(0,2)$.

The integral identity in  \eqref{eq:JJJJ_Hankel_eta}  paraphrases \cite[][(4.16)]{BSWZ2012}. (A special case of this modular pa\-ra\-me\-tri\-za\-tion led to a closed-form evaluation of the  ``random walk integral'' $ \int_0^\infty [J_0(x)]^5 x\D x$ in \cite[][(5.2)]{BSWZ2012}, which we quoted in our proof of Theorem \ref{thm:Bologna}. See also Table~\ref{tab:spec_X63_Z63}.)  \end{proof}\begin{table}[t]\scriptsize\caption{Values of  $X_{6,3}(z)$,  $Z_{6,3}(z)$ and their derivatives at $z=\frac{1}{2}+\frac{i\sqrt5}{2\sqrt{3}}$ \big[with ``rescaled Bologna constant'' $ c=\sqrt{5}C=\frac{1}{240\pi^{2}}\Gamma \left(\frac{1}{15}\right) \Gamma \left(\frac{2}{15}\right) \Gamma \left(\frac{4}{15}\right) \Gamma \left(\frac{8}{15}\right)$\big] \label{tab:spec_X63_Z63}}\begin{minipage}{0.35\textwidth}
\begin{align*}\begin{array}{c|c}\hline\hline\vphantom{\dfrac{\frac11}{\frac11}}  \vphantom{\dfrac{\frac11}{\frac11}}X_{6,3}(z)&-\dfrac{1}{64}\\\vphantom{\dfrac{\frac11}{\frac11}}X_{6,3}'(z)&\dfrac{3\sqrt{15}c}{32i} \\\vphantom{\dfrac{\frac11}{\frac11}}X_{6,3}''(z)&\dfrac{9c (9 c+1)}{16} \\\vphantom{\dfrac{\frac11}{\frac11}}X_{6,3}'''(z)&\dfrac{27\sqrt{15}c (18 c^2-18 c-1)}{80i} \\\vphantom{\dfrac{\frac11}{\frac11}}X_{6,3}''''(z)&\dfrac{81c(753 c^3+54 c^2-27 c-1)}{20}  \\\hline\hline\end{array}\end{align*}\end{minipage}\begin{minipage}{0.45\textwidth}
\begin{align*}\begin{array}{c|c}\hline\hline\vphantom{\dfrac{\frac11}{\frac11}}  Z_{6,3}(z)&\vphantom{\dfrac{\frac12}{}}\dfrac{8\sqrt{3}c}{\pi}\\\vphantom{\dfrac{\frac11}{\frac11}}Z_{6,3}'(z)&\dfrac{48 c (3c-1)}{\sqrt{5}\pi  i}\\\vphantom{\dfrac{\frac11}{\frac11}}Z_{6,3}''(z)&-\dfrac{48 \sqrt{3} c (62 c^2-18 c+3)}{5 \pi }\\\vphantom{\dfrac{\frac11}{\frac11}}Z_{6,3}'''(z)&\dfrac{1728 i c (57 c^3-62 c^2+9 c-1)}{5 \sqrt{5} \pi }\\\vphantom{\dfrac{\frac11}{\frac11}}Z_{6,3}''''(z)&\dfrac{1728 \sqrt{3} c (266 c^4-228 c^3+124 c^2-12 c+1)}{5 \pi } \\\hline\hline\end{array}\end{align*}\end{minipage}
\end{table}\begin{remark}

For any CM point $ z\in\mathfrak H$ (a complex number in the upper half-plane that solves a quadratic equation with integer coefficients), the  absolute value $ |\eta(z)|$ of the Dedekind eta function $\eta(z)$ can be explicitly written as
the product of an algebraic number, a rational power of $\pi$, and rational powers of special values for Euler's gamma function (see \cite[][\S12]{SelbergChowla} or \cite[][Theorem 9.3]{PoortenWilliams1999}). At any CM\ point $z$, the following expressions are computable algebraic numbers \cite[][(1.2.9) and Appendix 1]{AGF_PartI}: \begin{align}
\frac{E_2(z)}{[\eta(z)]^{4}}=\frac{12}{\pi i[\eta(z)]^{4}}\left[ \frac{\D\log\eta(z)}{\D z} -\frac{i}{4\I z}\right],\quad \frac{E_4(z)}{[\eta(z)]^8},\quad \frac{E_6(z)}{[\eta(z)]^{12}},
\end{align} where \begin{align} E_4(z)=1+240\sum_{n=1}^\infty \frac{n^3e^{2\pi inz}}{1-e^{2\pi i nz}},\quad E_6(z)=1-504\sum_{n=1}^\infty \frac{n^5e^{2\pi inz}}{1-e^{2\pi i nz}}\end{align} are  Eisenstein series of weights 4 and 6.
 Higher order derivatives of the Dedekind eta function can be deduced from Ramanujan's differential equations \cite{Ramanujan1916}:\begin{align}\left\{\begin{array}{r@{\;=\;}l}\dfrac{1}{2\pi i}\dfrac{\D E^*_2(z)}{\D z}&\dfrac{[E^*_2(z)]^2-E_4(z)}{12},\\[8pt]\dfrac{1}{2\pi i}\dfrac{\D E_4(z)}{\D z}&\dfrac{E^*_2(z)E_4(z)-E_6(z)}{3},\\[8pt]\dfrac{1}{2\pi i}\dfrac{\D E_6(z)}{\D z}&\dfrac{E^*_2(z)E_6(z)-[E_4(z)]^{2}}{2},\end{array}\right.\label{eq:PQR_ODE}
\end{align} where $ E_2^*(z)=E_2(z)+\frac{3}{\pi\I z}$ is a holomorphic ``weight-2 Eisenstein series''.

Samart has computed the values of $X_{6,3}(z) $ and $ Z_{6,3}(z)$ at $ z=\frac{1}{2}+\frac{i\sqrt{5}}{2\sqrt{3}}$ explicitly  \cite[][Lemma 1]{Samart2016}. We may combine his results with  \eqref{eq:PQR_ODE} to evaluate derivatives of  $X_{6,3}(z) $ and $ Z_{6,3}(z)$ at $ z=\frac{1}{2}+\frac{i\sqrt{5}}{2\sqrt{3}}$, as summarized in  Table~\ref{tab:spec_X63_Z63}.     \eor\end{remark}\begin{remark}As the Bessel differential equation leaves us \cite[][\S1]{Bailey2008}\begin{align}
\left(\frac{\partial^{2}}{\partial x^{2}}+\frac{1}{x}\frac{\partial}{\partial x}\right)^kI_0(xt)=t^{2k} I_0(xt),\quad \forall k\in\mathbb Z_{>0},\label{eq:Bessel_diff_trick}
\end{align}we will have no difficulties in computing $\IKM(2,3;1)=\frac{\sqrt{15}\pi}{2}C $, $ \IKM(2,3;3)=\frac{\sqrt{15}\pi}{2}\left( \frac{2}{15} \right)^2\big( 13C+\frac{1}{10C} \big)$ and $ \IKM(2,3;5)=\frac{\sqrt{15}\pi}{2}\left( \frac{4}{15} \right)^3\big( 43C+\frac{19}{40C} \big)$ from \eqref{eq:IKKK_I}, with assistance from Table~\ref{tab:spec_X63_Z63}. These Bessel moments were previously evaluated  in \cite[][\S5.10]{Bailey2008} with  combinatorial techniques.\eor\end{remark}
\subsection{Symmetric squares and Eichler integrals\label{eq:Sym2}}Central to the studies of Bloch--Kerr--Vanhove \cite{BlochKerrVanhove2015} and Samart \cite{Samart2016}  was the following motivic integral: \begin{align}\mathscr I(u):={}&\int_0^\infty I_0(\sqrt{u}t)[K_0(t)]^{4}t\D t\notag
\\={}&\frac{1}{8}\int_0^\infty\frac{\D X}{X}\int_0^\infty\frac{\D Y}{Y}\int_0^\infty\frac{\D Z}{Z}\frac{1}{(1+X+Y+Z)(1+X^{-1}+Y^{-1}+Z^{-1})-u},\label{eq:XYZ_int}
\end{align} and the geometry for  the family of $K3$ surfaces that compactify the locus  of $ (1+X+Y+Z)(1+X^{-1}+Y^{-1}+Z^{-1})-u=0$ and resolve singularities. Inspired by their analysis, we give a modular parametrization of $ \mathscr I(u)$  for $ u\leq16$.
In \cite{BlochKerrVanhove2015} and  \cite{Samart2016}, the authors  parametrized  the Feynman integral $ \mathscr I(u)$  with the modular function  $u(z)=-\left[\frac{\eta(z)\eta(3z)}{\eta(2z)\eta(6z)}\right]^6$, and needed sophisticated computations at the CM\ point $ z_*=\frac{-3+i\sqrt{15}}{24}$ where $u(z_*)=1$. In what follows, we will use a different modular parametrization (Lemma \ref{lm:Jac_xz}) to facilitate the representation of Bessel moments via Eichler integrals (Proposition \ref{prop:Eichler_Iu}).

\begin{lemma}[Jacobian for a modular function]\label{lm:Jac_xz}The modular parametrization   \begin{align} x=\frac{1}{i}\left[\frac{2 \eta (2z ) \eta (6 z )}{\eta (z ) \eta (3z)}\right]^{3}\end{align}  satisfies\begin{align}
\frac{1}{x}\frac{\D x}{\D z}=\pi i\left\{ \frac{[\eta (z ) \eta (2z )]^3}{\eta (3z ) \eta (6z )}+9\frac{ [\eta (3z ) \eta (6z )]^3}{\eta (z ) \eta (2z )} \right\}.\label{eq:xz_Jac}
\end{align} With $ q=e^{2\pi iz}$,  we have the following asymptotic behavior\begin{align}
q\frac{\D x}{\D q}=\frac{1}{2\pi i}\frac{\D x}{\D z}=\frac{4\sqrt{q}}{i}[1 + 9 q + 30 q^2 + 112 q^3 + 297 q^4+O(q^5)]\label{eq:xz_Jac_near_cusp}
\end{align} near the infinite cusp  ($z\to i \infty, q\to0$).\end{lemma}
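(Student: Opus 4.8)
The plan is to treat both sides of \eqref{eq:xz_Jac} as weight-$2$ modular forms on $\varGamma_0(6)$ and to reduce the identity to a finite comparison of $q$-expansions. First I would compute the left-hand side by logarithmic differentiation. Writing $x=\frac1i\bigl[\frac{2\eta(2z)\eta(6z)}{\eta(z)\eta(3z)}\bigr]^3$ and using the standard logarithmic derivative $\frac{\D}{\D z}\log\eta(z)=\frac{\pi i}{12}E_2(z)$, with $E_2(z)=1-24\sum_{n=1}^\infty\sigma_1(n)e^{2\pi inz}$, I obtain
\begin{align}
\frac1x\frac{\D x}{\D z}=3\frac{\D}{\D z}\log\frac{\eta(2z)\eta(6z)}{\eta(z)\eta(3z)}=\frac{\pi i}{4}\bigl[-E_2(z)+2E_2(2z)-3E_2(3z)+6E_2(6z)\bigr].\notag
\end{align}
The key structural observation is that although each $E_2(kz)$ is only quasimodular, this particular combination is anomaly-free: the coefficients $c_1=-1,c_2=2,c_3=-3,c_6=6$ satisfy $\sum_{d\mid6}c_d/d=0$, so the non-holomorphic pieces proportional to $1/\I z$ cancel and $-E_2(z)+2E_2(2z)-3E_2(3z)+6E_2(6z)$ is a genuine holomorphic modular form of weight $2$ on $\varGamma_0(6)$.

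Next I would verify that the right-hand side of \eqref{eq:xz_Jac}, namely $\frac{[\eta(z)\eta(2z)]^3}{\eta(3z)\eta(6z)}+9\frac{[\eta(3z)\eta(6z)]^3}{\eta(z)\eta(2z)}$, is also a holomorphic weight-$2$ form on $\varGamma_0(6)$: each summand is an eta quotient of total weight $\frac12(3+3-1-1)=2$, and one checks the Ligozat divisibility and nonnegative cusp-order conditions to confirm holomorphy at every cusp. Since $M_2(\varGamma_0(6))$ is finite-dimensional (the curve has genus zero and four cusps, so the space has dimension $3$), Sturm's bound $\lfloor\frac{2\cdot[\mathrm{SL}_2(\mathbb Z):\varGamma_0(6)]}{12}\rfloor=2$ guarantees that two such forms coincide once their $q$-expansions agree through $q^2$. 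Matching the leading coefficients (both series begin $1+6q+6q^2+42q^3+\cdots$ after dividing the Eisenstein combination by $4$) then establishes \eqref{eq:xz_Jac}. I expect this bookkeeping -- confirming holomorphy of the two eta quotients at the cusps and computing the low-order $q$-coefficients of both the Eisenstein combination and the eta quotients -- to be the main, if routine, obstacle; the conceptual crux is simply recognizing that the logarithmic derivative lands in the finite-dimensional space $M_2(\varGamma_0(6))$.

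Finally, the asymptotic expansion \eqref{eq:xz_Jac_near_cusp} is self-contained. The first equality $q\frac{\D x}{\D q}=\frac1{2\pi i}\frac{\D x}{\D z}$ is merely the chain rule for $q=e^{2\pi iz}$. For the series itself I would insert the product formula \eqref{eq:eta_defn} to write $x=\frac8i\sqrt q\,P(q)^3$, where $P(q)=\prod_{n=1}^\infty\frac{(1-q^{2n})(1-q^{6n})}{(1-q^n)(1-q^{3n})}=1+q+q^2+3q^3+\cdots$, expand $P(q)^3=\sum_{n\ge0}a_nq^n=1+3q+6q^2+16q^3+33q^4+\cdots$, and invoke $q\frac{\D x}{\D q}=\frac{4\sqrt q}{i}\sum_{n\ge0}(2n+1)a_nq^n$ to read off the coefficients $1,9,30,112,297$. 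Alternatively, one may multiply $\tfrac{x}{2}$ by the weight-$2$ form obtained in the first step; both routes produce the stated expansion and serve as a mutual cross-check.
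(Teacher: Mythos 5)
Your proof is correct, but it takes a genuinely different route from the paper's. You differentiate $\log x$ directly via $\tfrac{\D}{\D z}\log\eta(z)=\tfrac{\pi i}{12}E_2(z)$, observe that the resulting combination $-E_2(z)+2E_2(2z)-3E_2(3z)+6E_2(6z)$ is anomaly-free (indeed $\sum_{d\mid 6}c_d/d=-1+1-1+1=0$, so the $1/\I z$ terms cancel and the combination lies in $M_2(\varGamma_0(6))$; equivalently it is $[2E_2(2z)-E_2(z)]+3[2E_2(6z)-E_2(3z)]$, a sum of standard level-raised holomorphic combinations), check via Ligozat that both eta quotients on the right of \eqref{eq:xz_Jac} are holomorphic at all four cusps, and close the argument with the valence formula/Sturm bound in the three-dimensional space $M_2(\varGamma_0(6))$: since the total zero-degree of a nonzero weight-$2$ form of index $12$ is $2$, agreement through $q^2$ suffices, and your expansions $1+6q+6q^2+42q^3+\cdots$ on both sides (which I have checked) settle the identity. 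The paper proceeds differently: it first proves the level-$2$ identity \eqref{eq:logWeberf2_deriv} for $\tfrac{\D}{\D z}\log\tfrac{\eta(2z)}{\eta(z)}$ by a boundedness argument on the compact curve $X_0(2)$ with normalization at the infinite cusp, then imports two Chan--Zudilin identities to obtain \eqref{eq:logWeberf2_deriv'} and its cubic-transform companion \eqref{eq:logWeberf2_deriv''}, and adds them. Your approach is more self-contained and systematic (no external eta-quotient identities, everything reduced to one finite $q$-check justified by Sturm), whereas the paper's route, besides minimizing computation, yields the two individual logarithmic derivatives \eqref{eq:logWeberf2_deriv'}--\eqref{eq:logWeberf2_deriv''} as stand-alone identities that are reused later, e.g.\ in the Jacobian computation \eqref{eq:uw_Jac} of Proposition \ref{prop:IKM241}; your argument proves only their sum. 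The asymptotic part of your write-up coincides in substance with the paper's (chain rule plus the eta product), and your coefficient bookkeeping $x=\tfrac{8}{i}\sqrt{q}\,P(q)^3$ with $P(q)=\prod_{n\geq1}(1+q^n)(1+q^{3n})$, $P^3=1+3q+6q^2+16q^3+33q^4+\cdots$, and the factor $(2n+1)$ giving $1,9,30,112,297$, is accurate.
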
\begin{proof}We can verify the following identity \begin{align}
\frac{\D}{\D z}\log\frac{\eta (2z )}{\eta (z ) }=\frac{\pi i}{12}\frac{[\eta(z)]^{8}+32[\eta(4z)]^{8}}{[\eta(2z)]^{4}},\quad\forall z\in\mathfrak H\label{eq:logWeberf2_deriv}
\end{align} by showing that the ratio between both sides defines a bounded function on the compact Riemann surface $ X_{0}(2)=\varGamma_0(2)\backslash(\mathfrak H\cup\mathbb Q\cup\{i\infty\})$, and that this ratio  tends to $1$ as $z$ approaches the infinite cusp. Employing an identity due to Chan--Zudilin \cite[][(4.3)]{ChanZudilin2010}, we  rewrite  \eqref{eq:logWeberf2_deriv} as\begin{align}\frac{\D}{\D z}\log\frac{\eta (2z )}{\eta (z ) }={}&\frac{\pi i}{12}\left\{ \frac{[\eta (z ) \eta (2z )]^3}{\eta (3z ) \eta (6z )}+27\frac{ [\eta (3z ) \eta (6z )]^3}{\eta (z ) \eta (2z )} \right\},&\forall z\in\mathfrak H.\label{eq:logWeberf2_deriv'}
\tag{\ref{eq:logWeberf2_deriv}$'$}
\intertext{Meanwhile, a cubic transformation brings us \cite[][second equation below (4.5)]{ChanZudilin2010}}
\frac{\D}{\D z}\log\frac{\eta (6z )}{\eta (3z ) }={}&\frac{\pi i}{4}\left\{ \frac{[\eta (z ) \eta (2z )]^3}{\eta (3z ) \eta (6z )}+3\frac{ [\eta (3 z ) \eta (6z )]^3}{\eta (z ) \eta (2z )} \right\},&\forall z\in\mathfrak H.\label{eq:logWeberf2_deriv''}
\tag{\ref{eq:logWeberf2_deriv}$''$}
\end{align}The two equations above add up to \eqref{eq:xz_Jac}.

The expansion in \eqref{eq:xz_Jac_near_cusp} follows directly from  \eqref{eq:xz_Jac} and $ \eta(z)=q^{1/24}\prod_{n=1}^\infty(1-q^n)$. \end{proof}\begin{proposition}[Eichler integral representation of $ \mathscr I(u)$]\label{prop:Eichler_Iu}
Let $ \zeta(3)=\sum_{n=1}^\infty n^{-3}$ be Ap\'ery's constant. For $ z/i>0$, we have\begin{align}&
\int_0^\infty J_{0}\left(\left[\frac{2 \eta (2z ) \eta (6 z )}{\eta (z ) \eta (3z)}\right]^{3}t\right)[K_0(t)]^4t\D t\notag\\={}&Z_{6,3}(z)\left[\frac{7\zeta(3)}{8}+12\pi ^{3}i\int^{i\infty}_z\left[\frac{\eta (2 z') \eta (6 z')}{\eta (z') \eta (3 z')}\right]^3 \left\{[\eta (z') \eta (2 z')]^4+9[ \eta (3 z')\eta (6 z')]^4\right\}(z-z')^2\D z'\right],\label{eq:KKKK_Hankel_eta}
\intertext{which parametrizes $ \int_0^\infty J_{0}(xt)[K_0(t)]^4t\D t$ for $x>0$. For $ z=\frac12+iy,y\in\bigg(\frac{1}{2\sqrt{3}},\infty\bigg)$, we have}&\int_0^\infty I_{0}\left(\frac{1}{i}\left[\frac{2 \eta (2z ) \eta (6 z )}{\eta (z ) \eta (3z)}\right]^{3}t\right)[K_0(t)]^4t\D t\notag\\={}&Z_{6,3}(z)\left[\frac{7\zeta(3)}{8}+12\pi ^{3}i\int^{i\infty}_z\left[\frac{\eta (2 z') \eta (6 z')}{\eta (z') \eta (3 z')}\right]^3 \left\{[\eta (z') \eta (2 z')]^4+9[ \eta (3 z')\eta (6 z')]^4\right\}(z-z')^2\D z'\right],\label{eq:IKKKK_Eichler}\end{align}which parametrizes  $ \int_0^\infty I_{0}(xt)[K_0(t)]^4t\D t$ for $x\in(0,2)$. Moreover, the equation above remains valid for  $ z=\frac12+iy,y\in\bigg(0,\frac{1}{2\sqrt{3}}\bigg)$, corresponding to  $x\in(0,2)$; and for    $ z=\frac12+\frac{i}{2\sqrt{3}}e^{i\varphi},\varphi\in[0,\pi/3]$, corresponding to $ x\in[2,4]$. \end{proposition}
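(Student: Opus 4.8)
The plan is to regard the asserted formula as a single statement about $\Phi(z):=\int_0^\infty J_0(x(z)t)[K_0(t)]^4t\,\D t$ pulled back along $x=x(z)=\frac1i\left[\frac{2\eta(2z)\eta(6z)}{\eta(z)\eta(3z)}\right]^3$, and to prove it by identifying the third-order linear ODE that $\Phi$ satisfies and then integrating it against the cusp data at $z\to i\infty$. The key structural observation is that $\Phi$ and the companion transform $h(x):=\int_0^\infty J_0(xt)I_0(t)[K_0(t)]^3t\,\D t=\frac{\pi^2}{16}Z_{6,3}(z)$ of Proposition~\ref{prop:BM_mod} have integrands assembled from the same five Bessel factors, hence are periods of the same $K3$ family. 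By Clausen's identity the function ${}_3F_2(\tfrac13,\tfrac12,\tfrac23;1,1;\cdot)$ occurring in the proof of Proposition~\ref{prop:BM_mod} is the square of a ${}_2F_1$, so the third-order Picard--Fuchs operator $\mathcal D$ of this family is the symmetric square of a second-order operator; its three solutions, written through the modular variable, are $Z_{6,3}$, $zZ_{6,3}$, $z^2Z_{6,3}$ (with $Z_{6,3}\propto u_1^2$ for the holomorphic ${}_2F_1$-period $u_1$). In particular $\mathcal D h=0$. For $\Phi$, however, transferring $\mathcal D$ across the Hankel kernel by $\mathcal L_xJ_0(xt)=-t^2J_0(xt)$ and integrating by parts in $t$ leaves a non-vanishing endpoint contribution at $t=0^+$ generated by the $(\log t)^4$ singularity of $[K_0(t)]^4$, so $\Phi$ solves an \emph{inhomogeneous} equation $\mathcal D\Phi=S$ with explicit source $S$.

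Writing $\Phi=Z_{6,3}V$ and changing the independent variable to $z$, the operator $\mathcal D$ collapses: since its homogeneous solutions are $Z_{6,3}$ times $\{1,z,z^2\}$, the substitution turns $\mathcal D$ into a nonzero multiple of $\D^3/\D z^3$, and the equation becomes $V'''=-24\pi^3 i\,W(z)$ for a weight-$4$ form $W$. Here Lemma~\ref{lm:Jac_xz} does the decisive work, converting the $x$-derivatives in $S$ into $z$-derivatives; a short eta-quotient computation then gives $Z_{6,3}(z)\frac{\D x}{\D z}=8\pi\{[\eta(z)\eta(2z)]^4+9[\eta(3z)\eta(6z)]^4\}$, so that $W=\left[\frac{\eta(2z)\eta(6z)}{\eta(z)\eta(3z)}\right]^3\{[\eta(z)\eta(2z)]^4+9[\eta(3z)\eta(6z)]^4\}$ is exactly the integrand of the Eichler integral in the statement. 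I would then note that $W$ is a weight-$4$ cusp form on $\varGamma_0(6)$ with $q$-expansion $q+O(q^2)$; as $\dim S_4(\varGamma_0(6))=1$, this identifies $W=f_{4,6}=[\eta(z)\eta(2z)\eta(3z)\eta(6z)]^2$, which both guarantees holomorphy and makes the Eichler kernel decay at the cusp.

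It remains to integrate $V'''=-24\pi^3 i\,f_{4,6}$ three times. The Eichler integral $\mathcal E(z):=\int_z^{i\infty}f_{4,6}(z')(z-z')^2\D z'$ obeys $\mathcal E'''=-2f_{4,6}$, and $\mathcal E,\mathcal E',\mathcal E''$ all vanish at $i\infty$ because $f_{4,6}$ is cuspidal; hence $V-12\pi^3 i\,\mathcal E$ is a quadratic polynomial in $z$. To pin it down I use that $\Phi(x)$ is even and analytic in $x$ near $x=0$ (it depends on $u=-x^2$ and is smooth at the origin), so that $V$, as a power series in $q$, has vanishing first and second $z$-derivatives at $i\infty$; this forces the linear and quadratic terms to vanish and leaves the constant $V(i\infty)=\Phi(i\infty)/Z_{6,3}(i\infty)=\int_0^\infty[K_0(t)]^4t\,\D t=\frac{7\zeta(3)}{8}$ (using $Z_{6,3}(i\infty)=1$). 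This proves \eqref{eq:KKKK_Hankel_eta} on the positive imaginary axis. The remaining parameter ranges follow by analytic continuation exactly as in Proposition~\ref{prop:BM_mod} and Lemma~\ref{lm:X63}: continuing to the line $\R z=\tfrac12$ produces the $I_0$-kernel identity \eqref{eq:IKKKK_Eichler} for $x\in(0,2)$, and continuing along the arc $z=\tfrac12+\tfrac{i}{2\sqrt3}e^{i\varphi}$, $\varphi\in[0,\pi/3]$, extends it to $x\in[2,4]$.

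The main obstacle is the middle step: deriving the inhomogeneous equation $\mathcal D\Phi=S$ and evaluating its source exactly. The delicate part is the endpoint analysis at $t=0^+$, where the $(\log t)^4$ growth of $[K_0(t)]^4$ must be regularized and the surviving boundary terms assembled; one has to verify that this residue, after the substitution of Lemma~\ref{lm:Jac_xz}, reproduces $-24\pi^3 i\,f_{4,6}$ \emph{with the precise coefficient}, not merely up to an undetermined scalar. An equivalent and perhaps cleaner route views $\Phi$ as the quasi-period companion of $h$ obtained by replacing one factor $I_0$ by $K_0$, the two being tied by the Wronskian $I_0K_0'-I_0'K_0=-1/t$, which is the analytic source of the Eichler term; making this quantitative is the same computation. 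Finally, fixing the additive constant relies on the evaluation $\int_0^\infty[K_0(t)]^4t\,\D t=\frac{7}{8}\zeta(3)$, which I would cite or recover from the $x\to0$ limit.
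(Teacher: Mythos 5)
Your proposal reproduces the paper's overall architecture (inhomogeneous Picard--Fuchs equation for the $[K_0]^4$ Hankel transform, symmetric-square structure giving homogeneous solutions $xZ_{6,3}(z)z^j$, collapse to a triple $z$-derivative, triple integration with constants fixed at the cusp by $\IKM(0,4;1)=\tfrac{7\zeta(3)}{8}$, then analytic continuation), and your eta computation $Z_{6,3}(z)\,\tfrac{\D x}{\D z}=8\pi\left\{[\eta(z)\eta(2z)]^4+9[\eta(3z)\eta(6z)]^4\right\}$ is correct --- it is exactly how \eqref{eq:sigma_wt4} arises from \eqref{eq:xz_Jac}. But one central step is simply false: the identification $W=f_{4,6}$ of the Eichler integrand $W(z)=\left[\tfrac{\eta(2z)\eta(6z)}{\eta(z)\eta(3z)}\right]^3\left\{[\eta(z)\eta(2z)]^4+9[\eta(3z)\eta(6z)]^4\right\}$ with the cusp form $[\eta(z)\eta(2z)\eta(3z)\eta(6z)]^2$. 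Expanding the two eta-quotient summands (of $q$-orders $1$ and $2$ respectively) gives $W=q+8q^2+O(q^3)$, whereas $f_{4,6}=q-2q^2+O(q^3)$. Since $\dim S_4(\varGamma_0(6))=1$ is true, this discrepancy shows that the \emph{premise} of your argument fails: $W$ is not a cusp form on $\varGamma_0(6)$ at all. Indeed $W=\tfrac{Z_{6,3}(z)}{2\pi i}\tfrac{\D X_{6,3}(z)}{\D z}$ lives naturally on $\varGamma_0(6)_{+3}$ and, while it vanishes at $i\infty$, it does not vanish at every cusp of $\varGamma_0(6)$. Had you actually integrated $V'''=-24\pi^3 i\,f_{4,6}$, you would have proved \eqref{eq:KKKK_Hankel_eta} with the wrong integrand --- a numerically false identity, which would moreover corrupt the CM-point evaluations of \S\ref{subsec:BM_det_5Bessel} downstream. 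The repair is easy because the identification is never needed: holomorphy of $W$ on $\mathfrak H$ is manifest from the eta products, and convergence and decay of $\mathscr E,\mathscr E',\mathscr E''$ at $i\infty$ use only $W=q+O(q^2)$. (In the paper, $f_{4,6}$ enters only in \S\ref{sec:6Bessel}, through Hankel fusions, not in this proposition.)

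Two further points. First, you openly leave the precise coefficient in $V'''=-24\pi^3 i\,W$ underived; this is the quantitative heart of the proof, and the paper does not obtain it by your proposed $t\to 0^+$ endpoint analysis. Instead it quotes Bloch--Kerr--Vanhove for $\widehat A_4[x\mathscr I(x^2)]=-24x^3$ \eqref{eq:A4_inhom} and constructs the particular solution by variation of parameters, pinning the Wro\'nskian normalization $\kappa=1024i/\pi^3$ down via the $q$-expansion of Lemma \ref{lm:Jac_xz}. Note also that your ``collapse of the operator to a nonzero multiple of $\D^3/\D z^3$'' is only valid with a $z$-dependent multiplier, and computing that multiplier \emph{is} the Wro\'nskian computation in disguise, so this route does not shortcut it. Second, the extension to $x\in[2,4]$ is not ``routine analytic continuation exactly as in Proposition \ref{prop:BM_mod}'': the arc $z=\tfrac12+\tfrac{i}{2\sqrt3}e^{i\varphi}$ begins at the singular point $x=2$ of $\widehat A_4$, and the paper must additionally verify that $X_{6,3}$ is real-valued along this geodesic (via the $\widehat W_3$-symmetry of Lemma \ref{lm:X63} together with $z\mapsto z-1$) and maps it bijectively onto $\left[-\tfrac14,-\tfrac1{16}\right]$; your proposal omits these checks, as well as the second branch $z=\tfrac12+iy$, $y\in\left(0,\tfrac{1}{2\sqrt3}\right)$, entirely. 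Your determination of the constants ($c_1=c_2=0$ from boundedness of the moment as $x\to0^+$, $c_0=\tfrac{7\zeta(3)}{8}$ from $Z_{6,3}(i\infty)=1$) does agree with the paper.
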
\begin{proof}Unlike the expressions    $ \int_0^\infty I_{0}(xt)I_0(t)[K_0(t)]^3xt\D t$  and $ \int_0^\infty J_{0}(xt)[J_0(t)]^4xt\D t$ (covered in Proposition \ref{prop:BM_mod}), which are annihilated by the Picard--Fuchs operator \cite[][(2.6) and (2.7)]{BSWZ2012}\begin{align}
\widehat A_4:={}&x^{4}\left( x\frac{\D}{\D x}+1 \right)^3-4x^{3}\frac{\D}{\D x}\left[5\left(x\frac{\D}{\D x}\right)^{2}+3 \right]+64\left( x\frac{\D}{\D x}-1 \right)^3\notag\\={}&(x - 4) (x - 2) x^3 (x + 2) (x + 4)\frac{\D^{3}}{\D x^{3}}+6 x^4 (x^2 - 10)\frac{\D^{2}}{\D x^{2}}\notag\\{}&+x (7 x^4 - 32 x^2 + 64)\frac{\D}{\D x}+(x^2 - 8) (x^2 + 8),\label{eq:op_A4}
\end{align}the function $ x\mathscr I(x^2)=\int_0^\infty I_{0}(xt)[K_0(t)]^4xt\D t$ satisfies an inhomogeneous differential equation \cite[cf.][Theorem 2.2.1]{BlochKerrVanhove2015}:\begin{align}
\widehat A_4[x\mathscr I(x^2)]=-24x^{3}.\label{eq:A4_inhom}
\end{align}

For a solution to the homogeneous equation $ \widehat A_4[f(x)]=0$, a modular parametrization  \cite[cf.][Remark 4.10]{BSWZ2012}  $ x=\frac{1}{i}\left[\frac{2 \eta (2z ) \eta (6 z )}{\eta (z ) \eta (3z)}\right]^{3}$ leaves us general solutions in the form of \begin{align}
\frac{f(x)}{x}=Z_{6,3}(z)(c_{0}+c_1z+c_2 z^2),\label{eq:fx_Sym2}
\end{align}where the constants $c_0$, $c_1$, $c_2$ can be determined by the  behavior of $f(x)$ in specific contexts. We have the simple functional form in \eqref{eq:fx_Sym2} because the operator $\widehat A_4$ is a symmetric square \cite[][Remark 4.6]{BSWZ2012} and the corresponding family of $K3$ surfaces $ (1+X+Y+Z)(1+X^{-1}+Y^{-1}+Z^{-1})=u$ admit Shioda--Inose structure (see \cite[][Corollary 7.1]{Morrison1984K3}, \cite[][\S3.2]{BlochKerrVanhove2015} and \cite[][\S5]{Samart2016}).

To construct a particular solution to the inhomogeneous equation in \eqref{eq:A4_inhom}, we follow the Bloch--Kerr--Vanhove recipe \cite[][(2.3.9)]{BlochKerrVanhove2015}, and derive the differential equation for the Wro\'nskian determinant $W(x) $ via\begin{align}
\frac{\D }{\D x}\log W(x)={}&-\frac{6 x^4 (x^2 - 10)}{(x - 4) (x - 2) x^3 (x + 2) (x + 4)}\notag\\={}&-\frac{3}{2}\frac{\D }{\D x}\log[(16-x^2)(4-x^2)].
\end{align}Here, we determine the normalizing constant $ \kappa=1024i/\pi^{3}$ for  the Wro\'nskian \begin{align} W(x)=\frac{\kappa}{[(16-x^2)(4-x^2)]^{3/2}}=\det\begin{pmatrix}y_{0}(x)     & y_1(x) & y_{2}(x) \\
y_{0}'(x) & y_{1}'(x) & y_{2}'(x) \\
y_{0}''(x) & y_{1}''(x) & y_{2}''(x) \\
\end{pmatrix}\end{align} by  choosing a basis\begin{align}
\frac{y_{j}(x)}{x}= Z_{6,3}(z)z^j,\quad j\in\{0,1,2\},
\end{align}differentiating in $x$ with the help of \eqref{eq:xz_Jac_near_cusp} in Lemma  \ref{lm:Jac_xz} for small values of $ q=e^{2\pi iz}\to0$, and extracting the leading coefficient in the $q$-expansion $ \frac{\kappa}{512}[1-30 q+474 q^2+O(q^{3})]=\frac{2 i}{\pi ^3}[1-30 q+474 q^2+O(q^{3})]$. Then, we  simplify the integral representation of a particular solution \cite[cf.][(2.3.8)]{BlochKerrVanhove2015}
\begin{align}
y_*(\mathscr X)=\int_0^{\mathscr X}\frac{\widetilde W(\mathscr X,x)\hat A_4[x\mathscr I(x^2)]\D x}{W(x)(x - 4) (x - 2) x^3 (x + 2) (x + 4)}
\end{align}
where\begin{align}
\widetilde W(\mathscr X,x)= \det\begin{pmatrix}y_{0}(x)     & y_1(x) & y_{2}(x) \\
y_{0}'(x) & y_{1}'(x) & y_{2}'(x) \\
y_{0}(\mathscr X) & y_{1}(\mathscr X) & y_{2}(\mathscr X) \\
\end{pmatrix},
\end{align}using the cofactors\begin{align}
\left\{\begin{array}{r@{\;=\;}l}\det\begin{pmatrix}y_{0}(x)     & y_1(x) \\
y_{0}'(x) & y_{1}'(x) \\
\end{pmatrix}&x^{2}[Z_{6,3}(z)]^{2}\dfrac{\D z}{\D x},\\\det \begin{pmatrix}y_{0}(x)     & y_2(x) \\
y_{0}'(x) & y_{2}'(x) \\
\end{pmatrix}&x^{2}[Z_{6,3}(z)]^{2}\dfrac{\D z}{\D x}(2z),\\\det\begin{pmatrix}y_{1}(x)     & y_2(x) \\
y_{1}'(x) & y_{2}'(x) \\
\end{pmatrix}&x^{2}[Z_{6,3}(z)]^{2}\dfrac{\D z}{\D x}z^{2}.\end{array} \right.
\end{align}With the parametrization   $ x=\frac{1}{i}\left[\frac{2 \eta (2z ) \eta (6 z )}{\eta (z ) \eta (3z)}\right]^{3},\mathscr X=\frac{1}{i}\left[\frac{2 \eta (2\mathscr Z) \eta (6\mathscr Z )}{\eta (\mathscr Z ) \eta (3\mathscr Z)}\right]^{3}$, we see that the general solution $f(\mathscr X) $ to the inhomogeneous equation $\widehat A_4 f(\mathscr X)=-24\mathscr X^3 $ is\begin{align}&
\mathscr X Z_{6,3}(\mathscr Z)(c_{0}+c_1\mathscr Z+c_2 \mathscr Z^2)+\notag\\&+12\pi^{3}i\mathscr X Z_{6,3}(\mathscr Z)\int^{i\infty}_{\mathscr Z}\sqrt{\smash[b]{1+4X_{6,3}(z)}}\sqrt{\smash[b]{1+16X_{6,3}(z)}}[Z_{6,3}(z)]^{2}X_{6,3}(z)(\mathscr Z-z)^2\D z.
\end{align}Since $ Z_{6,3}(z)\to1$ as $ z\to i\infty$, we must have \begin{align}
c_{0}=\IKM(0,4;1)=\int_0^\infty [K_0(t)]^4 t\D t=\frac{7\zeta(3)}{8},c_1=0,c_2=0
\end{align}for our Eichler integral representations of Bessel moments.

When  $ z/i>0$ or $ z=\frac{1}{2}+iy$ for $ y>\frac{1}{2\sqrt{3}}$, according to Chan--Zudilin \cite[][(3.3) and (3.5)]{ChanZudilin2010}, we have\begin{align}&
\sqrt{\smash[b]{1+4X_{6,3}(z)}}\sqrt{\smash[b]{1+16X_{6,3}(z)}}\notag\\={}&\frac{[\eta(2z)\eta(6z)]^2}{[\eta(z)\eta(3z)]^4}\left( \sum_{m,n\in\mathbb Z}e^{2\pi i(m^2+mn+n^2)z} \right)\left( \sum_{m,n\in\mathbb Z}e^{4\pi i(m^2+mn+n^2)z} \right),
\end{align}where the two double sums appear in Ramanujan's cubic theory for elliptic functions \cite[][Chap.~33]{RN5}. Meanwhile, Borwein--Borwein--Garvan \cite[][Proposition 2.2(i)(ii) and Theorem 2.6(i)]{BorweinBorweinGarvan} identified  the product of these two double sums with\begin{align}
\frac{[\eta(z)\eta(2z)]^{3}}{\eta(3z)\eta(6z)}+9\frac{[\eta(3z)\eta(6z)]^{3}}{\eta(z)\eta(2z)},
\end{align}so we have a weight-4 modular form\begin{align}&[Z_{6,3}(z)]^{2}X_{6,3}(z)\sqrt{\smash[b]{1+4X_{6,3}(z)}}\sqrt{\smash[b]{1+16X_{6,3}(z)}}\notag\\={}&
\left[\frac{\eta (2 z) \eta (6 z)}{\eta (z) \eta (3 z)}\right]^3 \left\{[\eta (z) \eta (2 z)]^4+9[ \eta (3 z)\eta (6 z)]^4\right\},\label{eq:sigma_wt4}
\end{align}as given in the integrands of \eqref{eq:KKKK_Hankel_eta} and \eqref{eq:IKKKK_Eichler}.

In addition to a routine analytic continuation, we need to check two more things for the extension of our modular parametrization to $ x\in[2,4]$.

First, we  show that the modular function $ X_{6,3}(z)=\left[\frac{2 \eta (2z ) \eta (6 z )}{\eta (z ) \eta (3z)}\right]^{6}$ is real-valued along the geodesic segment    $ z=\frac12+\frac{i}{2\sqrt{3}}e^{i\varphi},\varphi\in[0,\pi/3]$. From an analytic continuation of the last line in \eqref{eq:X63_symm}, it is clear  that $ X_{6,3}\big( \frac12+\frac{i}{2\sqrt{3}}e^{i\varphi} \big)= X_{6,3}\big( \frac12+\frac{i}{2\sqrt{3}}e^{-i\varphi} \big)$. By modular invariance with respect to $z\mapsto z-1 $, we see that the same expression is also equal to $ X_{6,3}\big( -\frac12+\frac{i}{2\sqrt{3}}e^{-i\varphi} \big)=\overline{X_{6,3}\big( \frac12+\frac{i}{2\sqrt{3}}e^{i\varphi} \big)}$, its own complex conjugate.

Then, by modifying our arguments in the second half of Lemma \ref{lm:X63}, we can check that  $ X_{6,3}:\big\{\frac12+\frac{i}{2\sqrt{3}}e^{i\varphi}\big|\varphi\in[0,\pi/3]\big\}\longrightarrow\big[-\frac14,-\frac1{16}\big]$
 is bijective.
 \end{proof}\begin{remark}In the proposition above, our modular parametrizations of the motivic integral $ \mathscr I(u)$ differ from the Bloch--Kerr--Vanhove approach \cite[][(2.3.44)]{BlochKerrVanhove2015}, but closely resemble certain Eichler integrals in our previous work \cite[][\S4]{EZF} that served as precursors to Epstein zeta functions. In fact, the only methodological innovation here is that we are now working with  Eichler integrals on   $ \varGamma_0(6)_{+3}$, rather than on the simpler  Hecke congruence group   $ \varGamma_0(4)$, as  in \cite[][\S4]{EZF}. We refer our readers to \cite[][\S2]{AGF_PartII} for more arithmetic applications of inhomogeneous Picard--Fuchs equations.   \eor\end{remark}

\subsection{Special values of Eichler integrals\label{subsec:BM_det_5Bessel}}
If we want to compute the integral $\IKM(1,4;2k+1)= \int_0^{\infty}I_0(t)[K_0(t)]^4t^{2k+1}\D t$ for  $ k\in\{1,2\}$,  we may apply the differential identity in \eqref{eq:Bessel_diff_trick} to the Eichler integral representation in  \eqref{eq:IKKKK_Eichler}, at $ z=\frac12+\frac{i\sqrt{5}}{2\sqrt{3}}$. As we have closed-form evaluations of $ X_{6,3}(z)$, $Z_{6,3}(z) $ and their derivatives at this specific CM point    in Table~\ref{tab:spec_X63_Z63}, the remaining challenge resides in the computation of the Eichler integral\begin{align}
\mathscr E(z):={}&12\pi ^{3}i\int^{i\infty}_z\left[\frac{\eta (2 z') \eta (6 z')}{\eta (z') \eta (3 z')}\right]^3 \left\{[\eta (z') \eta (2 z')]^4+9[ \eta (3 z')\eta (6 z')]^4\right\}(z-z')^2\D z'+\frac{7\zeta(3)}{8}\notag\\={}&\frac{1}{Z_{6,3}(z)}\int_0^\infty I_{0}\big(8\sqrt{\smash[b]{-X_{6,3}}(z)}t\big)[K_0(t)]^4t\D t,
\intertext{along with its derivatives}\mathscr E'(z):={}&24\pi ^{3}i\int^{i\infty}_z\left[\frac{\eta (2 z') \eta (6 z')}{\eta (z') \eta (3 z')}\right]^3 \left\{[\eta (z') \eta (2 z')]^4+9[ \eta (3 z')\eta (6 z')]^4\right\}(z-z')\D z',\label{eq:Eichler'_defn}\\\mathscr E''(z):={}&24\pi ^{3}i\int^{i\infty}_z\left[\frac{\eta (2 z') \eta (6 z')}{\eta (z') \eta (3 z')}\right]^3 \left\{[\eta (z') \eta (2 z')]^4+9[ \eta (3 z')\eta (6 z')]^4\right\}\D z',\label{eq:Eichler''_defn}\end{align}at   $ z=\frac12+\frac{i\sqrt{5}}{2\sqrt{3}}$.
Meanwhile, special values of higher-order derivatives, such as \begin{align}
\mathscr E'''\left( \frac{1}{2}+\frac{i\sqrt{5}}{2\sqrt{3}} \right)={}&27 i \sqrt{5} \pi  c^2,&\mathscr E''''\left( \frac{1}{2}+\frac{i\sqrt{5}}{2\sqrt{3}} \right)={}&-108 \sqrt{3} \pi  c^2 (3 c+1),
\end{align}\big[with $c=\frac{1}{240\pi^2}\Gamma \left(\frac{1}{15}\right) \Gamma \left(\frac{2}{15}\right) \Gamma \left(\frac{4}{15}\right) \Gamma \left(\frac{8}{15}\right)$\big] are readily computable from  the expression [see \eqref{eq:sigma_wt4} and \eqref{eq:Eichler''_defn}]\begin{align} \mathscr E'''(z)=-24\pi ^{3} i[Z_{6,3}(z)]^{2}X_{6,3}(z)\sqrt{\smash[b]{1+4X_{6,3}(z)}}\sqrt{\smash[b]{1+16X_{6,3}(z)}},\end{align} and entries in  Table~\ref{tab:spec_X63_Z63}.

\begin{lemma}[Special values of $ \mathscr E(z)$ and $ \mathscr E'(z)$]\label{lm:Eichler_deriv}We have the following identities:\begin{align}
\mathscr E\left( \frac{1}{2}+\frac{i\sqrt{5}}{2\sqrt{3}} \right)={}&\frac{\pi^{3}}{8\sqrt{15}},\label{eq:Eichler_spec}\\
\mathscr E'\left( \frac{1}{2}+\frac{i\sqrt{5}}{2\sqrt{3}} \right)={}&\frac{\pi ^3}{20 i}-\frac{3 \pi\IKM(0,3;1) }{2 \sqrt{5} i}.\label{eq:Eichler'_spec}\end{align}\end{lemma}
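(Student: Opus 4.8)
The plan is to exploit the fact that at the CM point $z_\ast=\tfrac12+\tfrac{i\sqrt5}{2\sqrt3}$ the Hankel argument collapses to unity, so that $\mathscr E(z_\ast)$ and its derivative reduce to honest Bessel moments. From Table~\ref{tab:spec_X63_Z63} we have $X_{6,3}(z_\ast)=-\tfrac1{64}$, whence $8\sqrt{-X_{6,3}(z_\ast)}=1$. Feeding this into the Bessel representation $\mathscr E(z)=\frac1{Z_{6,3}(z)}\int_0^\infty I_0\big(8\sqrt{-X_{6,3}(z)}\,t\big)[K_0(t)]^4t\,\D t$ recorded just before the lemma gives $Z_{6,3}(z_\ast)\mathscr E(z_\ast)=\int_0^\infty I_0(t)[K_0(t)]^4t\,\D t=\IKM(1,4;1)$. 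Theorem~\ref{thm:Bologna} identifies this with the Bologna constant, which in the normalization of Table~\ref{tab:spec_X63_Z63} reads $\IKM(1,4;1)=\pi^2c/\sqrt5$; dividing by $Z_{6,3}(z_\ast)=8\sqrt3\,c/\pi$ produces $\pi^3/(8\sqrt{15})$ and settles \eqref{eq:Eichler_spec} at once.

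For \eqref{eq:Eichler'_spec} I would differentiate the same Bessel representation in $z$. Writing $x(z)=\tfrac1i[\,2\eta(2z)\eta(6z)/(\eta(z)\eta(3z))\,]^3$, so that $x^2=-64X_{6,3}$, and using $I_0'=I_1$, this yields $\mathscr E'(z)=-\tfrac{Z_{6,3}'(z)}{Z_{6,3}(z)}\mathscr E(z)+\tfrac{x'(z)}{Z_{6,3}(z)}\int_0^\infty I_1(x(z)t)[K_0(t)]^4t^2\,\D t$. At $z_\ast$ one has $x=1$, while $x'(z_\ast)$ follows from $2xx'=-64X_{6,3}'$ and the entry $X_{6,3}'(z_\ast)=\tfrac{3\sqrt{15}c}{32i}$. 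Since $Z_{6,3}(z_\ast)$, $Z_{6,3}'(z_\ast)$ and $\mathscr E(z_\ast)$ are all now in hand, the sole remaining ingredient is the $I_1$-moment $M:=\int_0^\infty I_1(t)[K_0(t)]^4t^2\,\D t$.

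The crux of the argument, and where I expect the real work to lie, is the evaluation of $M$. I would write $I_1=I_0'$ and integrate by parts: the boundary terms vanish because $I_0[K_0]^4t^2\sim e^{-3t}(\text{algebraic})$ at infinity and $\sim(\ln t)^4t^2$ at the origin, leaving $M=4\int_0^\infty I_0[K_0]^3K_1t^2\,\D t-2\IKM(1,4;1)$. To absorb the stray $K_1$ I would invoke the Wronskian identity $I_1(t)K_0(t)+I_0(t)K_1(t)=1/t$, which rewrites $I_0K_1$ as $1/t-I_1K_0$ and feeds a copy of $M$ back into the right-hand side, $4\int_0^\infty I_0K_1[K_0]^3t^2\,\D t=4\IKM(0,3;1)-4M$. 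The resulting linear equation $5M=4\IKM(0,3;1)-2\IKM(1,4;1)$ then gives $M=\tfrac15\big[4\IKM(0,3;1)-2\IKM(1,4;1)\big]$. The delicate point is precisely this self-referential integration by parts: it only closes because the Wronskian reproduces $M$ itself, and one must check the Bessel asymptotics to be certain the boundary contributions truly drop.

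Substituting $M$, the value $\IKM(1,4;1)=\pi^2c/\sqrt5$, and the Table~\ref{tab:spec_X63_Z63} entries into the formula for $\mathscr E'(z_\ast)$ is then a routine simplification. I would record as a consistency check that every term proportional to the Bologna constant $c$ cancels, leaving exactly the combination of $\pi^3$ and $\IKM(0,3;1)$ asserted in \eqref{eq:Eichler'_spec}; this cancellation is reassuring, since $\IKM(0,3;1)$ is an arithmetically independent constant and the final answer must not retain a spurious dependence on $c$.
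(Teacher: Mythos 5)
Your proposal follows the paper's own proof essentially step for step: you establish \eqref{eq:Eichler_spec} exactly as the paper does, from Theorem \ref{thm:Bologna} together with $Z_{6,3}\bigl(\tfrac12+\tfrac{i\sqrt5}{2\sqrt3}\bigr)=8\sqrt3\,c/\pi$, and you obtain \eqref{eq:Eichler'_spec} by differentiating \eqref{eq:Eichler_repn_X63Z63} at the CM point and evaluating $\int_0^\infty I_1(t)[K_0(t)]^4t^2\,\D t=\tfrac25\left[2\IKM(0,3;1)-\IKM(1,4;1)\right]$ via precisely the same integration by parts and Wronskian identity $I_0(t)K_1(t)+I_1(t)K_0(t)=1/t$ that the paper uses in \eqref{eq:I1I0}. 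Your boundary-term verification and the observed cancellation of all $c$-dependent terms in the final simplification are both correct, so the argument is complete as written.
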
\begin{proof}The evaluation in \eqref{eq:Eichler_spec} comes from Theorem \ref{thm:Bologna} and the special value for $Z_{6,3}\bigg(\frac12+\frac{i\sqrt{5}}{2\sqrt{3}}\bigg)$ in Table~\ref{tab:spec_X63_Z63}.

 Before computing  $ \mathscr E'(z)$ at $ z=\frac12+\frac{i\sqrt{5}}{2\sqrt{3}}$, we need to consider \begin{align}
\left.\frac{\D}{\D x}\right|_{x=1}\int_0^\infty  I_0(xt)[K_0(t)]^4t\D t=\int_0^\infty  I_1(t)[K_0(t)]^4t^2\D t.
\end{align}Integrating by parts, we obtain\begin{align}\int_0^\infty  I_1(t)[K_0(t)]^4t^2\D t=-2\int_{0}^\infty I_0(t)[K_0(t)]^4t\D t+4\int_0^\infty  I_0(t)K_{1}(t)[K_0(t)]^3t^2\D t.\end{align}Using the Wro\'nskian relation $I_0(t)K_1(t)+I_1(t)K_0(t)=1/t $, we get\begin{align}
\int_0^\infty  I_1(t)[K_0(t)]^4t^2\D t={}&\frac{4}{5}\int_0^\infty  [K_0(t)]^3t\D t-\frac{2}{5}\int_{0}^\infty I_0(t)[K_0(t)]^4t\D t\notag\\={}&\frac{2}{5}[2\IKM(0,3;1)-\IKM(1,4;1)].\label{eq:I1I0}
\end{align}

At the point $ z=\frac12+\frac{i\sqrt{5}}{2\sqrt{3}}$ where $ X_{6,3}(z)=-\frac{1}{64}$, we differentiate both sides of \begin{align}&
\int_0^\infty I_{0}\big(8\sqrt{\smash[b]{-X_{6,3}}(z)}t\big)[K_0(t)]^4t\D t=Z_{6,3}(z)\mathscr E(z)\label{eq:Eichler_repn_X63Z63}
\end{align}in $ z$, to deduce, respectively,\begin{align}
&-32X'_{6,3}(z)\int_0^\infty  I_1(t)[K_0(t)]^4t^2\D t\notag\\={}&3\sqrt{15}ic\int_0^\infty  I_1(t)[K_0(t)]^4t^2\D t=\frac{6\sqrt{15}ic}{5}[2\IKM(0,3;1)-\IKM(1,4;1)]
\end{align}and\begin{align}&Z_{6,3}'(z)\mathscr E(z)+Z_{6,3}(z)\mathscr E'(z)=-\frac{2\sqrt{3} \pi ^2 ic (3 c-1)}{5}  +\frac{8\sqrt{3}c}{\pi}\mathscr E'\left( \frac{1}{2}+\frac{i\sqrt{5}}{2\sqrt{3}} \right),
\end{align}where $ c=\sqrt{5}C=\frac{1}{240\pi^2}\Gamma \left(\frac{1}{15}\right) \Gamma \left(\frac{2}{15}\right) \Gamma \left(\frac{4}{15}\right) \Gamma \left(\frac{8}{15}\right)=\sqrt{5}\IKM(1,4;1)/\pi^2$ is the ``rescaled Bologna constant'' introduced in  Table~\ref{tab:spec_X63_Z63}. Comparing the last two displayed equations, we arrive at the  value of $ \mathscr E'\left( \frac{1}{2}+\frac{i\sqrt{5}}{2\sqrt{3}} \right)$ given in \eqref{eq:Eichler'_spec}.
\end{proof}\begin{lemma}[A special value of $ \mathscr E''(z)$]\label{lm:Eichler''}We have the following identity:\begin{align}
240\int^{i\infty}_{\frac{1}{2}+\frac{i\sqrt{5}}{2\sqrt{3}}}\left[\frac{\eta (2 z) \eta (6 z)}{\eta (z) \eta (3 z)}\right]^3 \left\{[\eta (z) \eta (2 z)]^4+9[ \eta (3 z)\eta (6 z)]^4\right\}(2z-1)\D z=1,\label{eq:id240}
\end{align} which entails\begin{align}\begin{split}
\mathscr E''\left( \frac{1}{2}+\frac{i\sqrt{5}}{2\sqrt{3}} \right)={}&\frac{3\sqrt{3}\pi}{5}\IKM(0,3;1).\label{eq:Eichler''_spec}
\end{split}\end{align}\end{lemma}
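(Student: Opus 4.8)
First I would record a factorization that trivializes the integrand of \eqref{eq:id240}. Writing $g(z)=\left[\frac{\eta(2z)\eta(6z)}{\eta(z)\eta(3z)}\right]^3\{[\eta(z)\eta(2z)]^4+9[\eta(3z)\eta(6z)]^4\}$ for the weight-$4$ cusp form of \eqref{eq:sigma_wt4} (so $g=q+O(q^2)$, $q=e^{2\pi iz}$), the claim is that $g(z)=\frac{1}{2\pi i}Z_{6,3}(z)X_{6,3}'(z)$, i.e.\ $g(z)\,\D z=\frac{1}{2\pi i}Z_{6,3}(z)\,\D X_{6,3}$. Adding \eqref{eq:logWeberf2_deriv'} and \eqref{eq:logWeberf2_deriv''} gives $\frac{\D}{\D z}\log\frac{\eta(2z)\eta(6z)}{\eta(z)\eta(3z)}=\frac{\pi i}{3}\{A+9B\}$, with $A=\frac{[\eta(z)\eta(2z)]^3}{\eta(3z)\eta(6z)}$ and $B=\frac{[\eta(3z)\eta(6z)]^3}{\eta(z)\eta(2z)}$, whence $X_{6,3}'=2\pi iX_{6,3}\{A+9B\}$; comparing with the identification $g=X_{6,3}Z_{6,3}\{A+9B\}$ extracted from \eqref{eq:sigma_wt4} yields the factorization. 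Thus \eqref{eq:id240} becomes $\frac{120}{\pi i}\int_{z_0}^{i\infty}Z_{6,3}(z)(2z-1)\,\D X_{6,3}=1$, with $z_0=\frac12+\frac{i\sqrt5}{2\sqrt3}$.

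To attack \eqref{eq:id240} itself I would first pin down the relevant symmetry. Since $\widehat W_3=\frac1{\sqrt3}\left(\begin{smallmatrix}3&-2\\6&-3\end{smallmatrix}\right)$ has $cz+d=\sqrt3(2z-1)$, and $X_{6,3}$ is $\widehat W_3$-invariant while $Z_{6,3}$ is an Atkin--Lehner eigenform on $\varGamma_0(6)_{+3}$, the $1$-form $\omega:=g(z)(2z-1)\,\D z$ satisfies $\widehat W_3^*\omega=-\epsilon\,\omega$, $\epsilon$ being the eigenvalue of $Z_{6,3}$. Because $Z_{6,3}(z)\mathscr E(z)=\int_0^\infty I_0(8\sqrt{-X_{6,3}(z)}\,t)[K_0(t)]^4t\,\D t$ depends only on the invariant $X_{6,3}$, one gets the functional equation $\mathscr E(\widehat W_3 z)=\frac{\mathscr E(z)}{\epsilon\,3(2z-1)^2}$; evaluating it at the fixed point $z_*=\frac12+\frac{i}{2\sqrt3}$, where $3(2z_*-1)^2=-1$ and $\mathscr E(z_*)=\mathscr I(4)/Z_{6,3}(z_*)\neq0$, forces $\epsilon=-1$. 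Hence $\omega$ is $\widehat W_3$-invariant, and folding the contour across $z_*$ shows both $\int_{\widehat W_3 z_0}^{z_0}\omega=0$ and $\int_{1/2}^{i\infty}\omega=0$, so that $\int_{z_0}^{i\infty}\omega$ is a genuine one-sided period.

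The hard part will be producing the rational value $\frac1{240}$, which this symmetry alone does not supply. One must also rule out the circular route: differentiating the representation \eqref{eq:Eichler_repn_X63Z63} twice reintroduces $\int_0^\infty I_0(t)[K_0(t)]^4t^3\,\D t=\IKM(1,4;3)$ — exactly the moment that \eqref{eq:id240} is meant to unlock — so the Bessel recurrences that sufficed for \eqref{eq:Eichler'_spec} are useless here. Instead I would treat $\int_{z_0}^{i\infty}Z_{6,3}(2z-1)\,\D X_{6,3}$ as a Green's-type pairing for the inhomogeneous Picard--Fuchs equation $\widehat A_4[x\mathscr I(x^2)]=-24x^3$: pairing the source $-24x^3$ against the homogeneous solution $xZ_{6,3}(2z-1)$ through Lagrange's bilinear concomitant re-expresses the period as boundary data at $x=1$ (the point $z_0$) and $x=0$ (the cusp), with the latter endpoint governed by $g=q+O(q^2)$ and by the Wronskian normalization $\kappa=1024i/\pi^3$ from the proof of Proposition \ref{prop:Eichler_Iu}. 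The appearance of $\frac1{240}=-B_4/8$ should reflect a weight-$4$ Eisenstein normalization at the cusp, and reconciling the two boundary contributions — with the CM datum at $z_0$ taken from Table~\ref{tab:spec_X63_Z63} — is where the real work lies.

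Finally, granting \eqref{eq:id240}, the passage to \eqref{eq:Eichler''_spec} is a short integration by parts. Using $\mathscr E''(z)=24\pi^3i\int_z^{i\infty}g(z')\,\D z'$ from \eqref{eq:Eichler''_defn} and the exponential decay of the cusp form $g$ (so $\mathscr E'$ and $\mathscr E''$ vanish at $i\infty$), one finds $24\pi^3i\int_{z_0}^{i\infty}g(z)(2z-1)\,\D z=\mathscr E''(z_0)(2z_0-1)-2\mathscr E'(z_0)$, and \eqref{eq:id240} gives
\[
\mathscr E''(z_0)(2z_0-1)-2\mathscr E'(z_0)=\frac{24\pi^3i}{240}=\frac{\pi^3i}{10}.
\]
Substituting $\mathscr E'(z_0)$ from \eqref{eq:Eichler'_spec} and $2z_0-1=\frac{i\sqrt5}{\sqrt3}$, the two $\pi^3$ terms cancel (because $\tfrac1i+i=0$), and the remaining $\IKM(0,3;1)$ contribution rearranges to $\mathscr E''(z_0)=\frac{3\sqrt3\pi}{5}\IKM(0,3;1)$, which is \eqref{eq:Eichler''_spec}.
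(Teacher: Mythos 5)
Your closing step is correct and coincides with the paper's: decomposing $2z'-1=(2z_0-1)-2(z_0-z')$ in \eqref{eq:Eichler'_defn}--\eqref{eq:Eichler''_defn} turns \eqref{eq:id240} into $(2z_0-1)\mathscr E''(z_0)-2\mathscr E'(z_0)=\pi^3i/10$ at $z_0=\frac12+\frac{i\sqrt5}{2\sqrt3}$, and the cancelation of the two $\pi^3$ terms against \eqref{eq:Eichler'_spec} then yields \eqref{eq:Eichler''_spec}. Your opening factorization $g(z)\,\D z=\frac{1}{2\pi i}Z_{6,3}(z)\,\D X_{6,3}(z)$ is likewise the paper's first observation, and your Atkin--Lehner computation ($\epsilon=-1$, invariance of $\omega$, vanishing of the folded periods) is correct as far as it goes. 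But the substance of the lemma is the rational value in \eqref{eq:id240}, and there your proposal has a genuine gap, which you concede yourself: the Lagrange-concomitant/Green's-pairing program is only a plan, no boundary contributions are actually computed, and the numerology $\tfrac1{240}=-B_4/8$ suggesting an Eisenstein normalization at the cusp is a red herring --- the $240$ is mere bookkeeping of prefactors, as the following shows.

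The idea you are missing is that the differential $Z_{6,3}\,\D X_{6,3}$ should be paired not with an abstract variation-of-parameters kernel but with the \emph{other} Hankel transform already parametrized in Proposition \ref{prop:BM_mod}. By \eqref{eq:JJJJ_Hankel_eta}, $\frac{3(2z-1)}{4\pi i}Z_{6,3}(z)=\int_0^\infty J_0(xt)[J_0(t)]^4t\,\D t$ with $x=8\sqrt{\smash[b]{-X_{6,3}}(z)}$, and along $\R z=\frac12$ the segment from $z_0$ (where $X_{6,3}=-\frac1{64}$, i.e.\ $x=1$) to the cusp (where $x=0$) is parametrized bijectively by $x\in(0,1)$, with $\D X_{6,3}=-\frac{x\,\D x}{32}$. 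Hence, using your factorization, the left-hand side of \eqref{eq:id240} equals
\begin{align*}
5\int_0^1\left(\int_0^\infty J_0(xt)[J_0(t)]^4t\,\D t\right)x\,\D x
=5\int_0^\infty J_1(t)[J_0(t)]^4\,\D t
=-\int_0^\infty\frac{\D}{\D t}[J_0(t)]^5\,\D t=1,
\end{align*}
where the middle step uses Fubini and $\int_0^1J_0(xt)x\,\D x=J_1(t)/t$, and the last needs only $J_0(0)=1$ together with the decay of $J_0$ at infinity. This is the paper's proof of \eqref{eq:id240}: no inhomogeneous Picard--Fuchs analysis, Wro\'nskian normalization, or $\widehat W_3$-symmetry is required, and the circularity you rightly flagged (reintroducing $\IKM(1,4;3)$ by differentiating \eqref{eq:Eichler_repn_X63Z63}) is sidestepped because the input is a $J$-Bessel ``random walk'' moment rather than a modified-Bessel one. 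Until your sketched pairing argument is carried out in full, with both boundary terms evaluated, your proposal does not establish \eqref{eq:id240}.
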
\begin{proof}Upon comparison between \eqref{eq:xz_Jac} and \eqref{eq:sigma_wt4}, we see that \begin{align}
\left[\frac{\eta (2 z) \eta (6 z)}{\eta (z) \eta (3 z)}\right]^3 \left\{[\eta (z) \eta (2 z)]^4+9[ \eta (3 z)\eta (6 z)]^4\right\}=\frac{Z_{6,3}(z)}{2\pi i}\frac{\D X_{6,3}(z)}{\D z}.
\end{align}Integrating  \eqref{eq:JJJJ_Hankel_eta}, namely\begin{align}
\int_0^\infty J_{0}\big(8\sqrt{\smash[b]{-X_{6,3}}(z)}t\big)[J_0(t)]^4t\D t=\frac{3(2z-1)}{4\pi i}Z_{6,3}(z)
\end{align}over the differential $ \D X_{6,3}(z)$, we identify the left-hand side of \eqref{eq:id240} with \begin{align}
5\int_0^\infty J_1(t)[J_0(t)]^4\D t=-\int_0^\infty \frac{\D }{\D t}[J_0(t)]^5\D t=1.
\end{align}

Meanwhile, the integral representations in  \eqref{eq:Eichler'_defn} and \eqref{eq:Eichler''_defn} tell us that the left-hand side of \eqref{eq:id240} is also equal to \begin{align}
\frac{20i}{\pi^{3}}\left[ \mathscr E'\left( \frac{1}{2}+\frac{i\sqrt{5}}{2\sqrt{3}} \right)-\frac{i\sqrt{5}}{2\sqrt{3}} \mathscr E''\left( \frac{1}{2}+\frac{i\sqrt{5}}{2\sqrt{3}} \right)\right].
\end{align} This verifies \eqref{eq:Eichler''_spec}.\end{proof}\begin{theorem}[$ \IKM(1,4;3) $  and $ \IKM(1,4;5)$  via  $  \mathscr E(z)$, $  \mathscr E'(z)$ and $ \mathscr E''(z)$]\label{prop:IKM143_145}We have\begin{align}
\IKM(1,4;3)={}&\pi^{2}\left( \frac{2}{15} \right)^2\left( 13C-\frac{1}{10C} \right),&\IKM(1,4;5)={}&\pi^{2}\left( \frac{4}{15} \right)^3\left( 43C-\frac{19}{40C} \right),
\end{align}where  $ C=\frac{1}{240 \sqrt{5}\pi^{2}}\Gamma \left(\frac{1}{15}\right) \Gamma \left(\frac{2}{15}\right) \Gamma \left(\frac{4}{15}\right) \Gamma \left(\frac{8}{15}\right)$ is the ``Bologna constant''. \end{theorem}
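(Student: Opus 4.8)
The plan is to obtain $\IKM(1,4;3)$ and $\IKM(1,4;5)$ by differentiating the modular parametrization \eqref{eq:Eichler_repn_X63Z63} of $\int_0^\infty I_0(xt)[K_0(t)]^4t\D t$ in the external variable $x$, and then specializing to the CM point $z_*=\frac12+\frac{i\sqrt5}{2\sqrt3}$, where $X_{6,3}(z_*)=-\frac1{64}$ forces $x=8\sqrt{-X_{6,3}(z_*)}=1$. Writing $\mathcal L_x:=\partial_x^2+x^{-1}\partial_x=x^{-1}\partial_x(x\partial_x)$ for the radial Bessel operator, identity \eqref{eq:Bessel_diff_trick} gives $\mathcal L_x^kI_0(xt)=t^{2k}I_0(xt)$, so that differentiating under the integral sign and setting $x=1$ yields $\IKM(1,4;2k+1)=\mathcal L_x^k\big[Z_{6,3}(z)\mathscr E(z)\big]\big|_{z=z_*}$ for $k\in\{1,2\}$.

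First I would convert $\mathcal L_x$ into an operator in $z$. Let $W(z):=\frac1{\pi i}\frac{\D\log x}{\D z}$ denote the weight-$4$ Jacobian factor furnished by Lemma \ref{lm:Jac_xz}, equivalently $W=X_{6,3}'/(2\pi i X_{6,3})$. Then $x\partial_x=(\pi i W)^{-1}\partial_z$, and the self-adjoint form of $\mathcal L_x$ collapses to $\mathcal L_x g=\frac1{(\pi i)^2x^2W^2}\big(g''-\frac{W'}{W}g'\big)$, where primes denote $\D/\D z$. Iterating once more expresses $\mathcal L_x^2 g$ through $g',g'',g''',g''''$ together with $W$ and its first three derivatives; all of $W(z_*),\dots,W'''(z_*)$ are computable from the entries $X_{6,3}(z_*),\dots,X_{6,3}''''(z_*)$ recorded in Table \ref{tab:spec_X63_Z63}. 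In particular $W(z_*)=3\sqrt{15}\,c/\pi$, so the prefactor evaluates to $\frac1{(\pi i)^2x^2W^2}\big|_{z_*}=-\frac1{135c^2}$, where $c=\sqrt5\,C$ is the rescaled Bologna constant.

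Next I would expand $g=Z_{6,3}\mathscr E$ by the Leibniz rule, so that each $g^{(j)}(z_*)$ becomes a finite sum $\sum_{\ell}\binom j\ell Z_{6,3}^{(\ell)}(z_*)\mathscr E^{(j-\ell)}(z_*)$. Every factor is already in hand: the derivatives $Z_{6,3}^{(\ell)}(z_*)$ are tabulated in Table \ref{tab:spec_X63_Z63}, while $\mathscr E(z_*),\mathscr E'(z_*),\mathscr E''(z_*)$ are supplied by Lemmas \ref{lm:Eichler_deriv} and \ref{lm:Eichler''}, and $\mathscr E'''(z_*),\mathscr E''''(z_*)$ by the closed forms preceding them. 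Substituting everything reduces each moment $\IKM(1,4;2k+1)$ to an explicit expression in the two constants $c$ and $\IKM(0,3;1)$.

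The delicate point, which I expect to be the main obstacle, is that the auxiliary constant $\IKM(0,3;1)$ must disappear from the final answer. Its only appearance is through $\mathscr E'(z_*)$ and $\mathscr E''(z_*)$; inserting the special values of $W'/W$, $Z_{6,3}$, $Z_{6,3}'$, $\mathscr E'$, and $\mathscr E''$, one checks that the coefficient of $\IKM(0,3;1)$ in $g''-\frac{W'}{W}g'$ vanishes identically, so no external evaluation of $\int_0^\infty[K_0(t)]^3t\D t$ is needed---the Eichler/Wro\'nskian structure forces the cancellation. Once this is confirmed, the residual $\pi^2$-constant terms cancel between $g''$ and $\frac{W'}{W}g'$, and multiplying the surviving $c^3\pi^2$ and $c\pi^2$ pieces by $-\frac1{135c^2}$ yields $\IKM(1,4;3)=\pi^2\big(\frac2{15}\big)^2\big(13C-\frac1{10C}\big)$. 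The evaluation of $\IKM(1,4;5)$ proceeds identically from the second iterate $\mathcal L_x^2 g$, now additionally drawing on $\mathscr E'''(z_*),\mathscr E''''(z_*)$ and the higher $Z_{6,3},X_{6,3}$ derivatives, and produces $\pi^2\big(\frac4{15}\big)^3\big(43C-\frac{19}{40C}\big)$. The remaining work is careful bookkeeping of gamma-valued constants rather than anything conceptual.
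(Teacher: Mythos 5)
Your proposal is correct, and I have verified its key claims; it follows the paper's strategy in outline (differentiate the parametrization \eqref{eq:Eichler_repn_X63Z63} at the CM point $z_*=\frac12+\frac{i\sqrt5}{2\sqrt3}$ and substitute Table~\ref{tab:spec_X63_Z63} together with Lemmata \ref{lm:Eichler_deriv}--\ref{lm:Eichler''}), but it implements the Bessel operator differently, and this changes the auxiliary inputs in a genuine way. The paper takes plain second and fourth $z$-derivatives of \eqref{eq:Eichler_repn_X63Z63}, so its left-hand sides contain the moments $\int_0^\infty I_1(t)[K_0(t)]^4t^2\D t$ and $\int_0^\infty I_1(t)[K_0(t)]^4t^4\D t$, which must be reduced by integration by parts and the Wro\'nskian relation $I_0(t)K_1(t)+I_1(t)K_0(t)=1/t$ as in \eqref{eq:I1I0}, with the extra imported fact $\IKM(0,3;3)=2[2\IKM(0,3;1)-1]/3$; the constant $\IKM(0,3;1)$ then cancels \emph{across} the two sides. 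By pulling back the exact eigenoperator $\mathcal L_x=x^{-1}\partial_x(x\partial_x)$ to the $z$-line, you avoid all $I_1$-moments and all of those auxiliary identities, at the price of needing the $\IKM(0,3;1)$-dependence to cancel \emph{internally} in $g''-(W'/W)g'$, where $g=Z_{6,3}\mathscr E$. That cancelation is genuine: at $z_*$ one has $W'/W=X_{6,3}''/X_{6,3}'-X_{6,3}'/X_{6,3}=6i(1-6c)/\sqrt{15}$, the coefficient of $\IKM(0,3;1)$ in $g''$ is $\frac{72c(6c-1)}{5}$, and the coefficient in $(W'/W)g'$ is the same, so they cancel; the remainder is $g''-(W'/W)g'=-\frac{6c\pi^2}{5\sqrt5}\left(26c^2-1\right)$, and multiplying by your prefactor $-\frac1{135c^2}$ reproduces $\pi^{2}\left(\frac{2}{15}\right)^{2}\left(13C-\frac{1}{10C}\right)$ exactly, in agreement with the paper's intermediate relation $135c^{2}\IKM(1,4;3)=\frac{6\sqrt5\pi^2c(26c^2-1)}{25}$. (Your cancelation is in fact the paper's cancelation in disguise: the first $z$-derivative of \eqref{eq:Eichler_repn_X63Z63} identifies $\int_0^\infty I_1(t)[K_0(t)]^4t^2\D t$ with $g'(z_*)/(\pi iW(z_*))$, which is precisely how the $\IKM(0,3;1)$-part of $\mathscr E'(z_*)$ arose in Lemma \ref{lm:Eichler_deriv}, so your $(W'/W)g'$ term re-encodes the paper's $I_1$-moment term.) Two cosmetic slips do not affect the argument: $W$ is a weight-2 form (the eta combination in \eqref{eq:xz_Jac} has weight 2), not weight 4; and the $\IKM(0,3;1)$-free parts of $g''$ and $(W'/W)g'$ do not cancel outright but combine into the surviving $c^3\pi^2$ and $c\pi^2$ pieces you then describe. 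At fourth order your scheme needs only $W,\dots,W'''$ from Table~\ref{tab:spec_X63_Z63} and the $\IKM(0,3;1)$-free values $\mathscr E'''(z_*)$ and $\mathscr E''''(z_*)$, with the coefficient of $\IKM(0,3;1)$ again vanishing by the same kind of finite check; so your route to $\IKM(1,4;5)$ is sound and, if anything, slightly leaner than the paper's, since it dispenses with both $t^4$-moment reductions and the external $\IKM(0,3;3)$ evaluation.
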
\begin{proof}As we twice differentiate \eqref{eq:Eichler_repn_X63Z63} with respect to $z$, and set $ X_{6,3}(z)=-\frac{1}{64}$ afterwards, we obtain a formula\begin{align}{}&
-32(64X'^{2}+X'')\int_{0}^\infty I_1(t)[K_0(t)]^4t^2\D t+1024X'^{2}\IKM(1,4;3)\notag\\={}&Z''\mathscr E+2Z'\mathscr E'+Z\mathscr E'',
\end{align} where the subscripts for $X_{6,3}$ and $ Z_{6,3}$ are dropped, and the argument  $ z=\frac12+\frac{i\sqrt{5}}{2\sqrt{3}}$  is suppressed throughout, to save space. Substituting known results from   Table~\ref{tab:spec_X63_Z63} and Lemmata \ref{lm:Eichler_deriv}--\ref{lm:Eichler''}, we may transcribe the last equality into \begin{align}
135c^{2}\IKM(1,4;3)=\frac{6 \sqrt{5} \pi ^2c(26 c^2-1)}{25} ,
\end{align}which confirms the evaluation for $ \IKM(1,4;3)$.

Taking fourth-order derivatives on  \eqref{eq:Eichler_repn_X63Z63}, we arrive at {\allowdisplaybreaks\begin{align}&-32\left[ X''''+64 (3 X''^2+24576 X'^4+4 X''' X'+768 X'^2 X'') \right]\int_{0}^\infty I_1(t)[K_0(t)]^4t^2\D t\notag\\{}&+1024(3 X''^2+24576 X'^4+4 X''' X'+768 X'^2 X'')\IKM(1,4;3)\notag\\{}&-65536X'^2 (3 X''+128 X'^2)\int_{0}^\infty I_1(t)[K_0(t)]^4t^4\D t+1048576X'^{4}\IKM(1,4;5)\notag\\
={}&Z''''\mathscr E+4Z'''\mathscr E'+6Z''\mathscr E''+4Z'\mathscr E'''+Z\mathscr E'''',\label{eq:4th_deriv}
\end{align}}where\begin{align}
\int_{0}^\infty I_1(t)[K_0(t)]^4t^4\D t=\frac{4}{5}[\IKM(0,3;3)-\IKM(1,4;3)]
\end{align} can be derived in a similar vein as  \eqref{eq:I1I0}, and  the relation $ \IKM(0,3;3)=2[2\IKM(0,3;1)-1]/3$ has been proved in \cite[][\S3.2]{Bailey2008}.
Now that the left-hand side of \eqref{eq:4th_deriv} equals \begin{align}
&-\frac{648c (78 c^3-36 c^2+18 c-1)}{5} \times \frac{2}{5}\left[2\IKM(0,3;1)-\frac{\pi^{2}c}{\sqrt{5}}\right]\notag\\{}&-2916 c^2 (3 c+1) (5 c-1)\IKM(1,4;3)\notag\\{}&-14580 (c-1) c^3\times\frac{4}{15}[4\IKM(0,3;1)-3\IKM(1,4;3)-2]+18225 c^4\IKM(1,4;5)
\end{align} and its right-hand side amounts to\begin{align}&\frac{216 \pi ^2 c (1330 c^4-684 c^3+124 c^2+12 c-3)}{25 \sqrt{5}}+7776 (c-1) c^3
\notag\\{}&-\frac{2592c (228 c^3-186 c^2+18 c-1)}{25}  \IKM(0,3;1),
\end{align}we can simplify the relation above into\begin{align}
&-729 c^{2} [4 (11 c^2+6 c-1) \IKM(1,4;3)-25 c^2 \IKM(1,4;5)]\notag\\={}&\frac{216 \pi ^2  c(862 c^4-468 c^3+16 c^2+18 c-3)}{25 \sqrt{5}}.
\end{align}This confirms the evaluation for $ \IKM(1,4;5)$. (Furthermore, based on the recursion for the  rescaled moments $   \IKM(1,4;2n+1)/\pi^{2},n\in\mathbb Z_{\geq0}$ \cite[][(11)]{Bailey2008}, one can show that all of them are rational combinations of $C$ and $1/C$.) \end{proof}
\begin{remark}We have recently found \cite[][\S2]{Zhou2017PlanarWalks} that the closed-form evaluation of  $ \IKM(1,4;3)$ can also be deduced from a result of Borwein--Straub--Vignat \cite[][Theorem 4.17]{BSV2016}, using Wick rotations.  \eor\end{remark}

\begin{remark}It is also possible to use factorizations of Wro\'nskians to compute the determinant of the matrix in \eqref{eq:BM_m2}, without evaluating the four individual Bessel moments. Such an algebraic approach is described in our recent manuscript \cite[][\S2]{Zhou2017BMdet}.\eor\end{remark}

\section{Feynman diagrams with 6  Bessel factors\label{sec:6Bessel}}
\subsection{Modular parametrization for certain Hankel transforms}Instead of working directly on the modularity of Feynman integrals with 6 Bessel factors, we will first analyze a small building block with 4 Bessel factors. The latter problem can be solved using the classical elliptic integrals  \cite[cf.][\S13.46, (9)]{Watson1944Bessel}, whose modular parametrization will be our major concern.  \begin{lemma}[Some Wick rotations]\label{lm:6BesselWick}\begin{enumerate}[leftmargin=*,  label=\emph{(\alph*)},ref=(\alph*),
widest=a, align=left]\item The following identities hold:
\begin{align}
\int_0^\infty [I_0(t)]^{2}[K_0(t)]^4t\D t={}&\frac{\pi^{4}}{30}\int_0^\infty [J_0(x)]^6x\D x,\label{eq:IIKKKK_JJJJJJ}\\\int_0^\infty I_0(t)[K_0(t)]^5t\D t={}&-\frac{\pi^5}{12}\int_0^\infty [J_0(x)]^5Y_{0}(x)x\D x.\label{eq:IKKKKK_JJJJJY}
\end{align}\item For $ x\in[0,1)$, we have \begin{align}
\int_0^\infty I_0(xt)I_0(t)[K_0(t)]^2t\D t=\frac{\pi^{2}}{6}\int_0^\infty J_0(xt)[J_{0}(t)]^{3}t\D t.\label{eq:IIKK_JJJJ}
\end{align}\item For $ x\in[0,3)$, we have\begin{align}
\int_0^\infty I_0(xt)[K_0(t)]^3t\D t={}&-\frac{\pi^{3}}{8}\int_0^\infty J_0(xt)Y_0(t)\{3[J_0(t)]^2-[Y_0(t)]^2\}t\D t,\label{eq:IKKK_JY_etc}\\3\int_0^\infty K_0(xt)I_{0}(t)[K_0(t)]^2t\D t={}&-\frac{\pi^{3}}{8}\int_0^\infty J_0(xt)Y_0(t)\{3[J_0(t)]^2+[Y_0(t)]^2\}t\D t\notag\\{}&-\frac{\pi^{3}}{4}\int_0^\infty Y_0(xt)[J_0(t)]^{3}t\D t.\label{eq:KIKK_JY_etc}
\end{align}\end{enumerate}\end{lemma}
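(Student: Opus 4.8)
The plan is to handle all five identities by the single mechanism already used in the proof of Theorem~\ref{thm:Bologna}: analytically continue the modified Bessel factors onto the positive imaginary axis through $J_0(ix)=I_0(x)$ and $\tfrac{\pi i}{2}H_0^{(1)}(ix)=K_0(x)$, rotate the contour $90^\circ$ clockwise onto $(0,\infty)$ by Jordan's lemma, and then read off the real or imaginary part of the real-axis integral, collapsing the resulting homogeneous polynomial in $J_0(x),Y_0(x)$ to a single monomial via repeated use of the cancelation formula \eqref{eq:JY_BHJ}. Throughout I abbreviate $J=J_0,\,Y=Y_0$, and set $A=J+iY=H_0^{(1)}$, $B=-J+iY$, so that $A^n-B^n=2\R(A^n)$ for odd $n$ and $A^n-B^n=2i\,\I(A^n)$ for even $n$; since $\ell=1$ (hence $(-1)^\ell=-1$) in every integral below, the combinations $\int_0^\infty J^m(A^n-B^n)x\,\D x$ are exactly those annihilated by Lemma~\ref{lm:BHJ}.

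For part (a) there is no free parameter, so I use the single-variable rotation. For \eqref{eq:IIKKKK_JJJJJJ}, $\int_0^{i\infty}[J_0]^2[H_0^{(1)}]^4z\,\D z=-\tfrac{16}{\pi^4}\int_0^\infty[I_0]^2[K_0]^4t\,\D t$ deforms to $\int_0^\infty[J_0]^2[H_0^{(1)}]^4x\,\D x$, whose real part is controlled by
\[
J^2(J^4-6J^2Y^2+Y^4)-\tfrac{2J^3}{3}\big[(J+iY)^3-(-J+iY)^3\big]-\tfrac{J}{10}\big[(J+iY)^5-(-J+iY)^5\big]=-\tfrac{8}{15}J^6,
\]
where the first bracket falls under the \emph{second} convergence clause of Lemma~\ref{lm:BHJ} ($m=n=3$, $\ell-m=\ell-n=-2<-1$) and the second under the first clause ($m=1,n=5$); this gives the factor $\pi^4/30$. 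For \eqref{eq:IKKKKK_JJJJJY} the analogous rotation $\int_0^{i\infty}J_0[H_0^{(1)}]^5z\,\D z=\tfrac{32i}{\pi^5}\int_0^\infty I_0[K_0]^5t\,\D t$ is purely imaginary, so I take imaginary parts and reduce via $J\,\I(A^5)-\tfrac53J^2\,\I(A^4)-\tfrac16\I(A^6)=-\tfrac83J^5Y$, the two subtracted terms being the even-$n$ instances $(m,n)=(2,4)$ and $(0,6)$ of \eqref{eq:JY_BHJ}; this yields $-\pi^5/12$.

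For part (b) and the first identity of part (c) the free parameter $x$ rides on the even entire kernel $J_0(xz)$, which becomes $I_0(xt)$ on the imaginary axis and $J_0(xu)$ on the real axis. Since $J_0(xz)=\tfrac12[H_0^{(1)}(xz)+H_0^{(2)}(xz)]$ carries an $H_0^{(2)}$-piece growing like $e^{-ixz}$ in the upper half-plane, the quarter-arc vanishes only when that growth is beaten by the $e^{i(\#H_0^{(1)})z}$ decay of the fixed factors; this forces $x<1$ in \eqref{eq:IIKK_JJJJ} (two $H_0^{(1)}$'s) and $x<3$ in \eqref{eq:IKKK_JY_etc} (three $H_0^{(1)}$'s). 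Formula \eqref{eq:JY_BHJ} survives verbatim with the extra even factor $J_0(xu)$ inserted, because $J_0(-xu)=J_0(xu)$. For \eqref{eq:IIKK_JJJJ} the real part gives $J\R(A^2)=J^3-JY^2=\tfrac23J^3+\tfrac13(J^3-3JY^2)$, the last term being a cancelable $J_0(xu)$-weighted $\R(A^3)$, producing $\pi^2/6$. For \eqref{eq:IKKK_JY_etc} no algebraic reduction is needed at all: the imaginary part is literally $\int_0^\infty J_0(xu)\,\I(A^3)u\,\D u=\int_0^\infty J_0(xu)\,Y(3J^2-Y^2)u\,\D u$, while the real part vanishes automatically (it is the cancelable $\R(A^3)$ term), delivering $-\pi^3/8$.

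The main obstacle is \eqref{eq:KIKK_JY_etc}, where the moving factor is $K_0(xt)$ rather than $I_0(xt)$. The correct analytic kernel is now $H_0^{(1)}(xz)$ (because $K_0(xt)=\tfrac{\pi i}{2}H_0^{(1)}(xz)$ at $z=it$), which on the real axis unfolds as $H_0^{(1)}(xu)=J_0(xu)+iY_0(xu)$, generating \emph{both} a $J_0(xu)$ and a $Y_0(xu)$ kernel and breaking the clean real/imaginary split of (a)--(b). Rotating $G(z)=H_0^{(1)}(xz)J_0(z)[H_0^{(1)}(z)]^2$ and taking imaginary parts gives $\int_0^\infty K_0(xt)I_0[K_0]^2t\,\D t=-\tfrac{\pi^3}{8}\int_0^\infty\{2J_0(xu)J^2Y+Y_0(xu)(J^3-JY^2)\}u\,\D u$; to recast this into the claimed two-term form I must show $\int_0^\infty\{J_0(xu)\,\I(A^3)+Y_0(xu)\,\R(A^3)\}u\,\D u=0$. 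Proving this is the hard step: it is a new, kernel-carrying analogue of Lemma~\ref{lm:BHJ}, which I would obtain by verifying $\int_{i0^+-\infty}^{i0^++\infty}H_0^{(1)}(xz)[H_0^{(1)}(z)]^3z\,\D z=0$ (the integrand decays like $z^{-1}e^{i(3+x)z}$ in the upper half-plane, so Jordan's lemma applies for every $x>0$, imposing no upper bound beyond the stated $[0,3)$), and then folding the line onto $(0,\infty)$ with the branch relations $H_0^{(1)}(-u+i0^+)=-J_0(u)+iY_0(u)$ and $H_0^{(1)}(-xu+i0^+)=-J_0(xu)+iY_0(xu)$ from \eqref{eq:H1_x}; the two halves combine to $2i\int_0^\infty\{J_0(xu)\,\I(A^3)+Y_0(xu)\,\R(A^3)\}u\,\D u$, whose vanishing is exactly what is needed. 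Granting this cancelation, multiplying the imaginary-part identity by $3$ and subtracting reproduces \eqref{eq:KIKK_JY_etc}. The only delicate points are the branch-cut bookkeeping for the shifted-argument Hankel factor during the fold and the quarter-arc estimates that pin down the admissible range of $x$.
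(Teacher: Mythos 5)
Your proposal is correct and follows essentially the same route as the paper's proof: Wick rotations justified by Jordan's lemma, followed by Lemma~\ref{lm:BHJ}-type cancelations, with your kernel-carrying contour identity $\int_{i0^+-\infty}^{i0^++\infty}H_0^{(1)}(xz)[H_0^{(1)}(z)]^3z\,\D z=0$ being precisely the folded vanishing identity the paper displays in order to eliminate the $\int_0^\infty Y_0(xt)J_0(t)[Y_0(t)]^2t\,\D t$ term in \eqref{eq:KIKK_JY_etc}. Your explicit reduction $J\,\I(A^5)-\tfrac{5}{3}J^2\,\I(A^4)-\tfrac{1}{6}\I(A^6)=-\tfrac{8}{3}J^5Y$ correctly supplies the step the paper dismisses as ``essentially similar'' for \eqref{eq:IKKKKK_JJJJJY}, and all constants, clause verifications for Lemma~\ref{lm:BHJ}, and admissible ranges of $x$ check out.
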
\begin{proof}\begin{enumerate}[leftmargin=*,  label=(\alph*),ref=(\alph*),
widest=a, align=left]\item As in the proof of Theorem \ref{thm:Bologna}, we compute\begin{align}
\left(\frac{2}{\pi}\right)^4\int_0^\infty [I_0(t)]^{2}[K_0(t)]^4t\D t={}& -\int_0^{i\infty} [J_0(z)]^{2}[H_0^{(1)}(z)]^4z\D z\notag\\={}&-\R\int_0^{\infty} [J_0(x)]^{2}[H_0^{(1)}(x)]^4x\D x\notag\\={}&-\int_0^{\infty} J^{2}(J^4-6 J^2 Y^2+Y^4)x\D x,
\end{align}where $ J=J_0(x),Y=Y_0(x)$ in the last step. Applying Lemma \ref{lm:BHJ} to \begin{align}&-J^2 (J^4-6 J^2 Y^2+Y^4)\notag\\{}&+\frac{J}{10} [(J+i Y)^5-(-J+i Y)^5]+
\frac{2J^3}{3}  [(J+i Y)^3-(-J+i Y)^3]=\frac{8 J^6}{15},
\end{align}we arrive at  \eqref{eq:IIKKKK_JJJJJJ}.

The proof of \eqref{eq:IKKKKK_JJJJJY} is essentially similar. \item By Jordan's lemma, we can justify the following Wick rotation for $x\in[0,1) $: \begin{align}
\left(\frac{2}{\pi}\right)^2\int_0^\infty I_0(xt)I_0(t)[K_0(t)]^2t\D t={}&\int_0^{i\infty} J_0(xz)J_0(z)[H_0^{(1)}(z)]^2z\D z\notag\\={}&\R\int_0^{\infty} J_0(xt)J_0(t)[H_0^{(1)}(t)]^2t\D t\notag\\={}&\int_0^\infty J_0(xt)J(J^{2}-Y^2)t\D t,
\end{align}where $J=J_0(t),Y=Y_0(t)$ in the last expression. Meanwhile, by a variation on Lemma \ref{lm:BHJ}, we have\begin{align}
\int_0^\infty J_0(xt)\frac{(J+iY)^3-(-J+iY)^3}{2}t\D t=\int_0^\infty J_0(xt)J(J^{2}-3Y^{2})t\D t=0,\label{eq:JJ_3YY}
\end{align}so the claimed identity is proved.\item To show \eqref{eq:IKKK_JY_etc}, simply take a Wick rotation:\begin{align}
\left(\frac{2}{\pi}\right)^3\int_0^\infty I_0(xt)[K_0(t)]^3t\D t={}&-\I\int_0^{i\infty}J_0(xz)[H_0^{(1)}(z)]^3z\D z\notag\\={}&-\I\int_0^{\infty}J_0(xt)[H_0^{(1)}(t)]^3t\D t\notag\\={}&-\int_0^\infty J_0(xt)Y(3J^2-Y^2)t\D t,
\end{align}where we use the abbreviation $J=J_0(t),Y=Y_0(t)$ as before.

For  \eqref{eq:KIKK_JY_etc}, Wick rotation alone brings us\begin{align}&
\left(\frac{2}{\pi}\right)^3\int_0^\infty K_0(xt)I_{0}(t)[K_0(t)]^2t\D t\notag\\={}&-2\int_0^\infty J_0(xt)J^{2}Yt\D t-\int_0^\infty Y_0(xt)J(J^{2}-Y^{2})t\D t.\label{eq:KIKK_Wick_prep}
\end{align}In the meantime, we extend the technique in Lemma \ref{lm:BHJ} to\begin{align}
\int_{0}^{\infty}\frac{[J_{0}(xt)+iY_0(xt)](J+iY)^3-[-J_{0}(xt)+iY_0(xt)](-J+iY)^3}{2i}t\D t=0,
\end{align}which implies \begin{align}
\int_0^\infty J_0(xt)Y(3J^2-Y^2)t\D t+\int_0^\infty Y_0(xt)J(J^2-3Y^2)t\D t=0.
\end{align} The equation above allows us to eliminate the term $ \int_0^\infty Y_0(xt)JY^2t\D t$ from \eqref{eq:KIKK_Wick_prep} and arrive at the right-hand side of   \eqref{eq:KIKK_JY_etc}.\qedhere\end{enumerate}\end{proof}Let $ h(x)=\int_0^\infty J_0(xt)I_0(t)[K_0(t)]^2t\D t$ be the Hankel transform of the function $ I_0(t)[K_0(t)]^2$, and $ \widetilde h(x)=\int_0^\infty J_0(xt)[J_0(t)]^3t\D t$ be a ``random walk integral'' ($ \widetilde h(x)=p_3(x)/x$, where $p_3(x)$ is the radial probability density of  the distance travelled by a random walker in the plane, taking three consecutive steps of unit lengths). According to  the Parseval--Plancherel theorem for Hankel transforms  \cite[cf.][(16)]{Bailey2008}, we have\begin{align}
\int_0^\infty [I_0(t)]^{2}[K_0(t)]^4t\D t=\int_{0}^\infty[h(x)]^2x\D x,\quad \int_0^\infty[J_{0}(t)]^6t\D t=\int_{0}^\infty[\widetilde h(x)]^2x\D x.\label{eq:IIKKKK_PPT}
\end{align}In order to  recast the left-hand sides of the equations above into  Eichler integrals, we need to represent the Hankel transforms  $h(x)$ and $\widetilde h(x) $ as modular forms.\begin{proposition}[Modular parametrizations of two Hankel transforms]\label{prop:Hankel_mod}\begin{enumerate}[leftmargin=*,  label=\emph{(\alph*)},ref=(\alph*),
widest=a, align=left]\item For $x>0$, we have a hypergeometric evaluation \begin{align}
\int_0^\infty J_0(xt)I_0(t)[K_0(t)]^2t\D t=\frac{\pi}{\sqrt{3} } \frac{1}{3+x^2} \, _2F_1\left(\left.\begin{array}{c}
\frac{1}{3},\frac{2}{3} \\1 \\
\end{array}\right|\frac{x^4 (9+x^2)}{(3+x^2)^3}\right),\label{eq:JIKK_2F1}
\end{align}which can be parametrized as \begin{align}
\int_0^\infty J_0\left(i\left[\frac{\theta\left(1-\frac{1}{3w}\right)}{\theta\left(3-\frac{1}{w}\right)}\right]^{2}t\right)I_0(t)[K_0(t)]^2t\D t=\frac{\pi}{3\sqrt{3}}\frac{\eta (3 w)[ \eta (2 w)]^6}{[\eta (w)]^3 [\eta (6 w)]^2},\label{eq:IKK_Hankel_mod}
\end{align}where $\theta(z):=\sum_{n\in\mathbb Z}e^{\pi i n^2z} $ is one of  Jacobi's elliptic theta functions (``Thetanullwerte''), and $ w=-\frac{1}{2}+iy$ for $ y>0$.\item For $ x\in(0,1)$, the function $p_3(x)/x= \int_0^\infty J_0(xt)[J_0(t)]^3t\D t$ admits a modular parametrization\begin{align}
\int_0^\infty J_0\left(\left[\frac{\theta\left(1-\frac{1}{3w}\right)}{\theta\left(3-\frac{1}{w}\right)}\right]^{2}t\right)[J_0(t)]^3t\D t={}&\frac{2}{\sqrt{3}\pi}\frac{\eta (3 w)[ \eta (2 w)]^6}{[\eta (w)]^3 [\eta (6 w)]^2},\label{eq:p3_small}
\intertext{where $w/i>0$; for $ x\in(1,3)$, the function $ p_3(x)/x$ can be parametrized as}
\int_0^\infty J_0\left(\left[\frac{\theta\left(1-\frac{1}{3w}\right)}{\theta\left(3-\frac{1}{w}\right)}\right]^{2}t\right)[J_0(t)]^3t\D t={}&\frac{2(1-3w)}{\sqrt{3}\pi}\frac{\eta (3 w)[ \eta (2 w)]^6}{[\eta (w)]^3 [\eta (6 w)]^2},\label{eq:p3_medium}
\end{align}where $ w=(1+e^{i\varphi})/6,\varphi\in(0,\pi)$; for $x>3$, we have $ p_3(x)/x=0$.\end{enumerate}\end{proposition}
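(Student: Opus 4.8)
The plan is to recognize both Hankel transforms as manifestations of the signature-three (Ramanujan cubic) hypergeometric theory, in exact parallel with the four-Bessel transforms of Proposition \ref{prop:BM_mod}---where the weight-two form $Z_{6,3}$ appeared---except that the relevant object is now the weight-one eta quotient $\frac{\eta(3w)[\eta(2w)]^6}{[\eta(w)]^3[\eta(6w)]^2}$. The Wick rotation of Lemma \ref{lm:6BesselWick}(b), equation \eqref{eq:IIKK_JJJJ}, will serve as the conceptual bridge between parts (a) and (b): the parametrized integral in (a) is an $I_0$-kernel transform (since $J_0(i\,\cdot)=I_0(\cdot)$), and \eqref{eq:IIKK_JJJJ} relates exactly such a transform to the $J_0$-kernel random-walk transform of (b).

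For the hypergeometric evaluation in part (a), I would expand the kernel as $J_0(xt)=\sum_{k\geq0}\frac{(-1)^k}{(k!)^2}(xt/2)^{2k}$ and integrate term by term, writing $h(x)=\sum_{k\geq0}\frac{(-1)^kx^{2k}}{4^k(k!)^2}\int_0^\infty I_0(t)[K_0(t)]^2t^{2k+1}\D t$. The even moments $\int_0^\infty I_0(t)[K_0(t)]^2t^{2k+1}\D t$ admit known closed forms (computable by the combinatorial methods of \cite{Bailey2008}), with $\int_0^\infty I_0(t)[K_0(t)]^2t\D t=\frac{\pi}{3\sqrt3}$ fixing $h(0)$. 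The resulting power series in $x$ should then be recast, via a cubic pullback of ${}_2F_1\big(\tfrac13,\tfrac23;1;\cdot\big)$---the one-order-lower analogue of the Domb generating identity \eqref{eq:Rogers2009} used in Proposition \ref{prop:BM_mod}---into the claimed form $\frac{\pi}{\sqrt3(3+x^2)}\,{}_2F_1\big(\tfrac13,\tfrac23;1;\tfrac{x^4(9+x^2)}{(3+x^2)^3}\big)$. This holds for $|x|$ small and extends to all $x>0$ by analytic continuation, the argument remaining inside $[0,1)$.

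Next, for the modular parametrizations, I would invoke the classical cubic uniformization: ${}_2F_1\big(\tfrac13,\tfrac23;1;\cdot\big)$ is rendered modular by Borwein's cubic theta functions, and its algebraic argument becomes a modular function once $x=\big[\theta(1-\tfrac1{3w})/\theta(3-\tfrac1w)\big]^2$. The task is to identify the weight-one eta quotient above with the appropriate cubic-theta combination, and to verify the two stated equalities by matching $q$-expansions of weight-one forms on the relevant congruence group (a finite computation once the dimension of the space is bounded), drawing on the eta and theta identities of Chan--Zudilin \cite{ChanZudilin2010} and Borwein--Borwein--Garvan \cite{BorweinBorweinGarvan}. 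Running $w$ along $\R w=-\tfrac12$, where the theta quotient is real, the explicit factor $i$ turns the kernel $J_0(i\,\cdot)$ into $I_0(\cdot)$ and yields part (a); running $w$ up the imaginary axis, where the theta quotient is again real but the factor $i$ is absent, yields the $x\in(0,1)$ case of part (b). I would first establish each identity near the infinite cusp (small $x$) and then continue along the respective geodesic, exactly as in Proposition \ref{prop:BM_mod}.

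The remaining two ranges in part (b) require continuing past the branch point of the three-step density at $x=1$ and accounting for the support cutoff at $x=3$. Crossing $x=1$ brings in the second solution of the underlying hypergeometric (Picard--Fuchs) equation through the standard connection formula; this second period is precisely the multiplier $(1-3w)$---the weight-one counterpart of the factor $(2z-1)$ appearing in \eqref{eq:JJJJ_Hankel_eta}---now evaluated on the arc $w=(1+e^{i\varphi})/6$. For $x>3$ the vanishing $p_3(x)/x=0$ follows because the displacement of three unit steps cannot exceed $3$, equivalently from the three-Bessel integral underlying $\widetilde h$ vanishing outside the triangle-inequality range \cite[cf.][\S13.46]{Watson1944Bessel}. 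I expect the principal obstacle to be the modular matching of the third paragraph---pinning down the Hauptmodul so the algebraic hypergeometric argument coincides with the cubic modular function, and certifying the weight-one eta quotient as the correct uniformizer on both geodesics---together with the branch-crossing bookkeeping at $x=1$ that produces the $(1-3w)$ period factor. As a final consistency check, the prefactors $\frac{\pi}{3\sqrt3}$ in (a) and $\frac{2}{\sqrt3\pi}$ in (b) differ by exactly $\pi^2/6$, as forced by the Wick rotation \eqref{eq:IIKK_JJJJ}.
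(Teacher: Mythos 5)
Your proposal follows the same skeleton as the paper's proof: the Wick rotation \eqref{eq:IIKK_JJJJ} as the bridge between parts (a) and (b), cubic (signature-three) modular uniformization of $_2F_1\bigl(\tfrac13,\tfrac23;1;\cdot\bigr)$, a second-solution argument on $(1,3)$ producing the factor $(1-3w)$ (the paper pins it down by requiring finiteness of $p_3(x)/x$ at the cusp $w\to\tfrac13$, i.e.\ $x\to3^-$, with the prefactor fixed by asymptotics --- essentially your connection-formula bookkeeping), and a Wick-rotation/support argument for $x>3$. One variant: for \eqref{eq:JIKK_2F1} the paper does not resum moments; it transports the known hypergeometric form of $p_3$ from \cite[(3.4)]{BSWZ2012} through \eqref{eq:IIKK_JJJJ} and then applies Ramanujan's cubic transformation \eqref{eq:doubling_Hecke3} at $p=-\frac{2x^2}{x^2+3}$. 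Your unspecified ``cubic pullback'' is precisely this transformation, and producing it (or the equivalent generating-function identity for the moments $\int_0^\infty I_0(t)[K_0(t)]^2t^{2k+1}\D t$) is the real content of that step, not a routine recast; as written it is the one place in part (a) where you assert rather than prove.

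There is, however, a concrete error: you have swapped the two geodesics. Along $\R w=-\tfrac12$ the theta quotient is \emph{not} real --- its square is purely imaginary, since $u=x^2=-\bigl[\theta\bigl(1-\tfrac{1}{3w}\bigr)\big/\theta\bigl(3-\tfrac1w\bigr)\bigr]^{4}$ is positive there --- so the prefactor $i$ makes $i\bigl[\theta/\theta\bigr]^{2}$ \emph{real} positive, and \eqref{eq:IKK_Hankel_mod} is the genuine $J_0$-kernel Hankel transform \eqref{eq:JIKK_2F1} evaluated at the modular $x\in(0,\infty)$; this is exactly what the later Parseval--Plancherel fusions (Proposition \ref{prop:IKM241} and the 4-loop sunrise) require. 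Your reading of (a) as the $I_0$-kernel transform $\int_0^\infty I_0(xt)I_0(t)[K_0(t)]^2t\D t$ with real $x=[\theta/\theta]^2$ would fail outright on most of that geodesic, since that integral diverges for $x\geq1$ while the parametrized $x$ runs over all of $(0,\infty)$. The conversion you describe happens instead on the imaginary $w$-axis: there $[\theta/\theta]^2\in(0,1)$ is real, $J_0(i\,\cdot)$ degenerates to $I_0(\cdot)$ under analytic continuation of \eqref{eq:IKK_Hankel_mod}, and the Wick rotation \eqref{eq:IIKK_JJJJ} then yields \eqref{eq:p3_small} --- which is how the paper derives (b) from (a), and which is the true source of your (correct) $\pi^2/6$ prefactor check. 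The chain is salvageable by exchanging the roles of the two geodesics, but as stated your derivation of (a), and hence the launching point for (b), is wrong.
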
\begin{proof}\begin{enumerate}[leftmargin=*,  label=(\alph*),ref=(\alph*),
widest=a, align=left]\item For sufficiently small $x$, we have \begin{align}
\int_0^\infty J_0(xt)I_0(t)[K_0(t)]^2t\D t=\frac{\pi}{\sqrt{3} } \frac{1}{3-x^2} \, _2F_1\left(\left.\begin{array}{c}
\frac{1}{3},\frac{2}{3} \\1 \\
\end{array}\right|-\frac{x^2 (9+x^2)^2}{(3-x^2)^3}\right),\label{eq:JIKK_prep}
\end{align}by the Wick rotation in \eqref{eq:IIKK_JJJJ} and an  analytic continuation of the hypergeometric representation for $ \int_0^\infty J_0(xt)[J_0(t)]^3t\D t$ \cite[][(3.4)]{BSWZ2012}. Setting $p=-\frac{2 x^2}{x^2+3}$ in the following hypergeometric identity \cite[][Chap.\ 33, Theorem 6.1]{RN5}:
\begin{align}
{_2F_1}\left(\left.\begin{array}{c}\frac{1}{3},\frac{2}{3} \\1 \\
\end{array}\right|\frac{p (3+p)^2}{2 (1+p)^3}\right)
=(1+p){_
2F_1}\left(\left.\begin{array}{c}
\frac{1}{3},\frac{2}{3} \\1 \\
\end{array}\right|\frac{p^{2} (3+p)}{4}\right),\label{eq:doubling_Hecke3}
\end{align}we  recast \eqref{eq:JIKK_prep} into \eqref{eq:JIKK_2F1}. The validity of  \eqref{eq:JIKK_2F1} extends to all $x>0$, by analytic continuation.

With a substitution $ x=i\big[\theta\big(-\frac{2}{3z}-1\big)\big/\theta\big(-\frac{2}{z}-3\big)\big]^{2}$, one can verify \begin{align}
\frac{x^4 (9+x^2)}{(3+x^2)^3}=\left\{ 1+\frac{1}{27}\left[ \frac{\eta(z)}{\eta(3z)} \right] ^{12}\right\}^{-1}
\end{align}by showing that the ratio between the two sides defines a bounded function on  $ X_{0}(3)=\varGamma_0(3)\backslash(\mathfrak H\cup\mathbb Q\cup\{i\infty\})$, and that the leading order $ q$-expansions of both sides agree. One can also check that the geodesic $ z=(5+e^{i\varphi})/12,\varphi\in(0,\pi)$ is mapped bijectively to $x\in(0,\infty)$, using a method similar to what was employed in the proof of Proposition~\ref{prop:Eichler_Iu}.

Meanwhile,  with the aforementioned relation between  $x\in(0,\infty)$ and $z=(5+e^{i\varphi})/12$ for $ \varphi\in(0,\pi)$, we  paraphrase an identity \cite[][Chap.~33, Corollary 3.4]{RN5} from Ramanujan's notebook as follows:\begin{align}
\frac{\sqrt{3} \sqrt[3]{\vphantom{1}x} \sqrt[4]{1+x^2} \sqrt[12]{9+x^2}}{3+x^2} \, _2F_1\left(\left.\begin{array}{c}
\frac{1}{3},\frac{2}{3} \\1 \\
\end{array}\right|\frac{x^4 (9+x^2)}{(3+x^2)^3}\right)=\left[\eta\left( \frac{2 z-1}{3 z-1} \right)\right]^{2}.
\end{align}  Multiplying both sides with  \begin{align} \frac{\sqrt3 \sqrt[12]{1+x^2}}{ \sqrt[3]{\vphantom{1}x}\sqrt[12]{9+x^2}}=\frac{\eta(z)}{\eta(3z)}=\frac{\eta\left( \frac{2 z-1}{3 z-1} \right)}{\eta\left( \frac{6 z-3}{3 z-1} \right)},\quad\text{where }\left(\begin{smallmatrix}2&-1\\3&-1\end{smallmatrix}\right)\in\varGamma_0(3),\end{align}we obtain\begin{align}
\frac{3\sqrt[3]{1+x^{2}}}{3+x^2}{_2F_1}\left(\left.\begin{array}{c}
\frac{1}{3},\frac{2}{3} \\1 \\
\end{array}\right|\frac{x^4 (9+x^2)}{(3+x^2)^3}\right)=\frac{\left[\eta\left( \frac{2 z-1}{3 z-1} \right)\right]^3}{\eta\left( \frac{6 z-3}{3 z-1} \right)}.
\end{align}  Furthermore, by a theta function identity \cite[][Chap.~18, (24.31)]{RN3} in Ramanujan's notebook, we have \begin{align}\sqrt[3]{1+
x^{2}}=\sqrt[3]{1-\left[\frac{\theta\left( -\frac{2}{3z}-1 \right)}{\theta\left( -\frac{2}{z}-3 \right)}\right]^{4}}=1-\frac{\theta\left( -\frac{2}{9z}-\frac{1}{3} \right)}{\theta\left( -\frac{2}{z}-3 \right)},\label{eq:theta_cubic_id}
\end{align}and the last expression can be reduced by an identity\begin{align}
1-\frac{\theta\left( \frac{2\tau}{3} -1\right)}{\theta(6\tau-9)}=2\frac{\eta(\tau)}{\eta(2\tau)}\left[ \frac{\eta(6\tau)}{\eta(3\tau)} \right]^{3},\quad \forall \tau\in\mathfrak H,\label{eq:theta_eta_x9}
\end{align}   also due to Ramanujan \cite[][Chap.~16, Entry 24(iii) and Chap.~20, Entry 1(ii)]{RN3}.

Finally, setting $ \tau=1-\frac{1}{3z}$ and $ \frac{2 z-1}{3 z-1}=1+2w\in i\mathbb R$ for  $ z=(5+e^{i\varphi})/12,\varphi\in(0,\pi)$, while simplifying eta functions with the modular transformation $ \eta(-1/\tau')=\sqrt{\tau'/i}\eta(\tau')$ where necessary, we arrive at the expression in \eqref{eq:IKK_Hankel_mod}.\item The modular parametrization in \eqref{eq:p3_small} follows directly from analytic continuation of  \eqref{eq:IKK_Hankel_mod} and the Wick rotation relation in \eqref{eq:IIKK_JJJJ}.

One notes that the smooth functions $ p_3(x),x\in(0,1)$ and $ p_3(x),x\in(1,3)$ satisfy the same ordinary differential equation of second order  \cite[][Theorem 2.4]{BSWZ2012}, so  $ p_3(x)/x,x\in(1,3)$ must be a linear combination of \begin{align}
\frac{\eta (3 w)[ \eta (2 w)]^6}{[\eta (w)]^3 [\eta (6 w)]^2}\quad\text{and}\quad \frac{w\eta (3 w)[ \eta (2 w)]^6}{[\eta (w)]^3 [\eta (6 w)]^2}
\end{align}for $x=\left[\theta\left(1-\frac{1}{3w}\right)\big/\theta\left(3-\frac{1}{w}\right)\right]^{2} $. Here, the linear combination must be proportional to $ (1-3w)$, so as to guarantee finiteness of $ p_3(x)/x$ in the $ x\to3-0^+$ regime. The precise prefactor can be determined by asymptotic analysis of $ p_3(x)/x$ and $q$-expansion of the modular form. This proves \eqref{eq:p3_medium}.

For $x>3$, one can prove  $ \int_0^\infty J_0(xt)[J_0(t)]^3t\D t=0$ by extracting the real part from the following Wick rotation:\begin{align}
\int_0^\infty H_{0}^{(1)}(xt)[J_0(t)]^3t\D t=\frac{2i}{\pi}\int_0^\infty [I_0(t)]^3K_{0}(xt)t\D t,\quad \forall x>3.
\end{align}Alternatively, one may invoke the probabilistic interpretation of  $p_3(x)=\int_0^\infty J_0(xt)[J_0(t)]^3xt\D t$ to conclude that $ p_3(x)/x=0$ for $x>3$.    \qedhere\end{enumerate}    \end{proof}\begin{remark}The modular parametrizations in the proposition above are foreshadowed by the following formula (see  \cite[][\S13.46, (9)]{Watson1944Bessel} and \cite[][(3)]{BNSW2011}) for $ x\in(0,1)\cup(1,3)$:\begin{align}
\int_0^\infty J_0(xt)[J_{0}(t)]^{3}t\D t=\frac{1}{\pi^{2}\sqrt{x}}\R {_
2F_1}\left(\left.\begin{array}{c}
\frac{1}{2},\frac{1}{2} \\1 \\
\end{array}\right|\frac{(3-x) (1+x)^3}{16 x}\right),
\end{align}
and the fact that \cite[][Chap.~33, Lemma 5.5 and Theorem 5.6]{RN5}\begin{align}
 {_
2F_1}\left(\left.\begin{array}{c}
\frac{1}{2},\frac{1}{2} \\1 \\
\end{array}\right|-\frac{(3+t^{2}) (1-t^{2})^3}{16 t^{2}}\right)=[\theta(3z)]^{2},\quad\text{for } t=\frac{\theta(z)}{\theta(3z)},z/i>0.
\end{align}  Formally, we may regard \eqref{eq:IKK_Hankel_mod} as  an analytic continuation of the identities above, along with a modular transformation corresponding to \eqref{eq:doubling_Hecke3}.\eor\end{remark}\begin{remark}It is also possible to parametrize the aforementioned Hankel transforms without using Jacobi's theta functions. For example, after comparing the Taylor expansion of $ p_3(x),0\leq x<1$ due to Borwein--Straub--Wan--Zudilin \cite[][(3.2)]{BNSW2011} to Zagier's Ap\'ery-like recurrence (Case \textbf{C}) \cite{Zagier2009Apery}, we arrive at \begin{align}
\int_0^\infty J_0\left(\frac{3[ \eta (w)]^2 [\eta (6 w)]^4}{[\eta (3 w)]^2[\eta (2 w)]^4 }t\right)[J_0(t)]^3t\D t=\frac{2}{\sqrt{3}\pi}\frac{\eta (3 w)[ \eta (2 w)]^6}{[\eta (w)]^3 [\eta (6 w)]^2}\tag{\ref{eq:p3_small}$'$},
\end{align}for $w/i>0$, which is an alternative formulation of \eqref{eq:p3_small}. For yet another approach to this modular parametrization, see Broadhurst's recent talks at Vienna (\cite[][\S1.2]{Broadhurst2017DESY} and  \cite[][\S1.2]{Broadhurst2017ESIa}), which refers to his earlier talk at Les Houches \cite[][\S2.5]{Broadhurst2014LesHouches}.    \eor\end{remark}

In addition to the usual Hankel transform $ \int_0^\infty J_0(xt)f(t)t\D t$ of a function $ f(t),t\in(0,\infty)$, we will also need the $Y$-transform $ \int_0^\infty Y_0(xt)f(t) t\D t$ and the $K$-transform $ \int_0^\infty K_0(xt)f(t)t\D t$ for certain Bessel moments.
\begin{proposition}[$Y$- and $K$-transforms]\label{prop:Y_K_transforms}\begin{enumerate}[leftmargin=*,  label=\emph{(\alph*)},ref=(\alph*),
widest=a, align=left]\item We have \begin{align}
&\int_0^\infty J_0\left(i\left[\frac{\theta\left(1-\frac{1}{3w}\right)}{\theta\left(3-\frac{1}{w}\right)}\right]^{2}t\right)[K_0(t)]^3t\D t-\frac{3\pi}{2}\int_0^\infty Y_0\left(i\left[\frac{\theta\left(1-\frac{1}{3w}\right)}{\theta\left(3-\frac{1}{w}\right)}\right]^{2}t\right)I_0(t)[K_0(t)]^2t\D t\notag\\={}&\frac{\pi^{2}(2w+1)}{2\sqrt{3}i}\frac{\eta (3 w)[ \eta (2 w)]^6}{[\eta (w)]^3 [\eta (6 w)]^2},\label{eq:JKKK_YIKK}
\end{align}where $ w=-\frac{1}{2}+iy$ for $ y>0$, and\begin{align}&
\int_0^\infty I_0\left(\left[\frac{\theta\left(1-\frac{1}{3w}\right)}{\theta\left(3-\frac{1}{w}\right)}\right]^{2}t\right)[K_0(t)]^3t\D t+3\int_0^\infty K_0\left(\left[\frac{\theta\left(1-\frac{1}{3w}\right)}{\theta\left(3-\frac{1}{w}\right)}\right]^{2}t\right)I_0(t)[K_0(t)]^2t\D t\notag\\={}&\frac{\pi^{2}w}{\sqrt{3}i}\frac{\eta (3 w)[ \eta (2 w)]^6}{[\eta (w)]^3 [\eta (6 w)]^2} \label{eq:IKKK_KIKK}
\end{align}   for $w/i>0$.\item We have \begin{align}&
3\int_0^\infty J_0\left(\left[\frac{\theta\left(1-\frac{1}{3w}\right)}{\theta\left(3-\frac{1}{w}\right)}\right]^{2}t\right)[J_0(t)]^2Y_0(t)t\D t+\int_0^\infty Y_0\left(\left[\frac{\theta\left(1-\frac{1}{3w}\right)}{\theta\left(3-\frac{1}{w}\right)}\right]^{2}t\right)[J_0(t)]^3t\D t\notag\\={}&-\frac{4w}{\sqrt{3}\pi i}\frac{\eta (3 w)[ \eta (2 w)]^6}{[\eta (w)]^3 [\eta (6 w)]^2}\label{eq:JJJY_YJJJ}
\end{align}for  $w/i>0$ and  $ w=(1+e^{i\varphi})/6,\varphi\in(0,\pi)$. \end{enumerate}\end{proposition}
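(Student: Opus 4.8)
Both halves of the proposition run on the same machine that drives \S\ref{sec:6Bessel}: the Wick rotations of Lemma~\ref{lm:6BesselWick}, which trade an $I_0/K_0$ integral for a $J_0/Y_0$ integral with a real scaling variable; the modular parametrizations of the two base four-factor Hankel transforms $h(x)=\int_0^\infty J_0(xt)I_0(t)[K_0(t)]^2t\D t$ and $\widetilde h(x)=\int_0^\infty J_0(xt)[J_0(t)]^3t\D t$ supplied by Proposition~\ref{prop:Hankel_mod}; and the fact that the second-order ODE governing all these four-Bessel-factor transforms (the hypergeometric equation behind \eqref{eq:JIKK_2F1}) has, in the modular coordinate $w$, a two-dimensional solution space spanned by $M:=\frac{\eta(3w)[\eta(2w)]^6}{[\eta(w)]^3[\eta(6w)]^2}$ and $wM$. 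Every right-hand side in the proposition is a prefactor \emph{linear in $w$} times $M$, which is precisely what this solution pencil predicts; the real content is deciding which member of $\{M,wM\}$ a given transform selects, and fixing the constants.

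For part~(a) the plan is as follows. After applying $J_0(ix)=I_0(x)$, $\frac{\pi i}{2}H_0^{(1)}(ix)=K_0(x)$ and $H_0^{(1)}=J_0+iY_0$, each of the four transforms appearing there is an $I_0$- or $K_0$-transform of a four-Bessel-factor integrand, hence a solution of the ODE above. The two combinations $\int_0^\infty I_0(x_0t)[K_0]^3t\D t+3\int_0^\infty K_0(x_0t)I_0[K_0]^2t\D t$ and $\int_0^\infty J_0(ix_0t)[K_0]^3t\D t-\frac{3\pi}{2}\int_0^\infty Y_0(ix_0t)I_0[K_0]^2t\D t$ are engineered, by balancing the symmetric-square weights $3[J_0]^2\pm[Y_0]^2$, to be the particular solution that stays regular at the relevant endpoint. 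I would write each as $(c_0+c_1w)M$, read off the correct linear combination from the branch behaviour of $H_0^{(1)}$ under the rotation $x\mapsto ix$ and from the geometry of the two parametrizing loci ($\R w=-\tfrac12$ for \eqref{eq:JKKK_YIKK} versus $w/i>0$ for \eqref{eq:IKKK_KIKK}), and finally pin $c_0,c_1$ by matching the $q=e^{2\pi iw}\to0$ expansion against the small-$x$ (large-$\I w$) behaviour of the Bessel moments, exactly as in the proof of Proposition~\ref{prop:Hankel_mod}.

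Part~(b) then follows in one line, and this is the step I am most confident about. Summing the two Wick rotations \eqref{eq:IKKK_JY_etc} and \eqref{eq:KIKK_JY_etc} of Lemma~\ref{lm:6BesselWick}(c), the integrands $Y_0(t)\{3[J_0]^2-[Y_0]^2\}$ and $Y_0(t)\{3[J_0]^2+[Y_0]^2\}$ add to $6[J_0]^2Y_0(t)$, so the $[Y_0]^3$ pieces cancel and there remains
\begin{align*}
&\int_0^\infty I_0(xt)[K_0(t)]^3t\D t+3\int_0^\infty K_0(xt)I_0(t)[K_0(t)]^2t\D t\\
={}&-\frac{\pi^3}{4}\left[3\int_0^\infty J_0(xt)[J_0(t)]^2Y_0(t)t\D t+\int_0^\infty Y_0(xt)[J_0(t)]^3t\D t\right],
\end{align*}
valid for every $x\in[0,3)$. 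Putting $x=\big[\theta(1-\tfrac1{3w})\big/\theta(3-\tfrac1w)\big]^2$ and replacing the left-hand side by \eqref{eq:IKKK_KIKK} of part~(a) gives $-\frac{4}{\pi^3}\cdot\frac{\pi^2w}{\sqrt3\,i}M=-\frac{4w}{\sqrt3\pi i}M$, which is exactly \eqref{eq:JJJY_YJJJ}. Since \eqref{eq:IKKK_KIKK} is stated only for $w/i>0$ (that is, $x\in(0,1)$), I would reach the arc $w=(1+e^{i\varphi})/6$ (that is, $x\in(1,3)$) by analytic continuation in $w$; the Wick rotations themselves are already valid on all of $[0,3)$.

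The main obstacle lives entirely inside part~(a): identifying, for each mixed $I_0/K_0/Y_0$ transform, the exact linear combination of $M$ and $wM$ that it represents. This demands careful bookkeeping of the $H_0^{(1)}$ branch when the argument is rotated to $ix$, of the monodromy of the transforms across the thresholds $x=1$ and $x=3$, and of a sufficiently accurate cusp expansion to separate the coefficient of $w$ from the constant term. Once those coefficients are correct, the cancellation above turns part~(b) into a triviality.
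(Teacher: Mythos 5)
Your part~(b) is exactly the paper's argument: summing the Wick rotations \eqref{eq:IKKK_JY_etc} and \eqref{eq:KIKK_JY_etc} so that the $[Y_0(t)]^3$ pieces cancel, substituting \eqref{eq:IKKK_KIKK}, and reaching the arc $w=(1+e^{i\varphi})/6$ by analytic continuation --- nothing to change there. The genuine gaps are both inside part~(a).

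First, your assertion that ``each of the four transforms appearing there is an $I_0$- or $K_0$-transform of a four-Bessel-factor integrand, hence a solution of the ODE above'' is false, and it is precisely the false step that makes the rest look easy. Only $h(x)=\int_0^\infty J_0(xt)I_0(t)[K_0(t)]^2t\D t$ is annihilated by the operator $\widehat B_3$ of \eqref{eq:defn_B3}; the $J$-transform of $[K_0]^3$ and the $Y$-transform of $\pi I_0[K_0]^2$ satisfy \emph{inhomogeneous} equations, with constant right-hand sides $\tfrac32$ and $1$ respectively, and the specific coefficients $-\tfrac{3\pi}{2}$ and $+3$ in \eqref{eq:JKKK_YIKK}--\eqref{eq:IKKK_KIKK} are chosen exactly so that these inhomogeneities cancel. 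Establishing those two inhomogeneous equations --- the paper does it via the shared recursion of $c_{3,k}$ and $s_{3,k}$, and via differentiation under the integral sign plus integration by parts for the $Y$-transform \cite[cf.][\S9]{Vanhove2014Survey} --- is the substantive content you must supply before you are entitled to the ansatz $(c_0+c_1w)M$. ``Balancing the symmetric-square weights $3[J_0]^2\pm[Y_0]^2$'' is the bookkeeping of the Wick rotations in Lemma \ref{lm:6BesselWick}, a different mechanism, and does not license that ansatz.

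Second, your constant-fixing plan --- matching the $q\to0$ expansion against the small-$x$ behaviour ``exactly as in the proof of Proposition \ref{prop:Hankel_mod}'' --- determines only $c_1$. The coefficient of $\log x$ (equivalently, of $w$) is indeed fixed by $\int_0^\infty I_0(t)[K_0(t)]^2t\D t=\pi/(3\sqrt3)$, as in the paper; but the $O(1)$ term of the $Y$-transform as $x\to0^+$ involves $\IKM(0,3;1)$ and logarithmic moments such as $\int_0^\infty I_0(t)[K_0(t)]^2\log(t)\,t\,\D t$, none of which can be read off the $q$-expansion of the modular side, so cusp data alone cannot decide $c_0$. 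The paper escapes through the \emph{other} end of the geodesic: superimpose \eqref{eq:JKKK_YIKK} with $\tfrac{3\pi}{2}i$ times \eqref{eq:IKK_Hankel_mod} to assemble the $H_0^{(1)}$ combination, continue analytically to $w/i>0$ (this is the honest version of your ``branch behaviour'' remark), and let $w\to i0^+$, i.e.\ $x\to1$, where the known value $\int_0^\infty I_0(t)[K_0(t)]^3t\D t=\pi^2/16$ forces $k_0=0$, since $M(w)$ blows up in that limit while $wM(w)$ stays finite. Some special value away from the cusp --- here $\IKM(1,3;1)$ --- is indispensable; without one, your matching scheme leaves $c_0$ undetermined and part~(a), hence the input to your otherwise correct part~(b), unproved.
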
\begin{proof}\begin{enumerate}[leftmargin=*,  label=(\alph*),ref=(\alph*),
widest=a, align=left]\item We observe that the sequences $ c_{3,k}:=\int_0^\infty [K_0(t)]^3t^k\D t$ and $ s_{3,k}:=\int_0^\infty I_{0}(t)[K_0(t)]^2t^k\D t$ satisfy the same recursion \cite[][(8)]{Bailey2008}, namely,  $ (k+1)^4 c_{3,k}-2(5k^2+20k+21)c_{3,k+2}+9c_{3,k+4}=0$ and $ (k+1)^4 s_{3,k}-2(5k^2+20k+21)s_{3,k+2}+9s_{3,k+4}=0$ both hold for non-negative integers $k$. As a result, the function\begin{align}
\int_0^\infty J_0(\sqrt{u}t)I_0(t)[K_0(t)]^2t\D t=\frac{\pi}{\sqrt{3} } \frac{1}{3+u} \, _2F_1\left(\left.\begin{array}{c}
\frac{1}{3},\frac{2}{3} \\1 \\
\end{array}\right|\frac{u^{2}(9+u)}{(3+u)^3}\right)
\end{align} is annihilated by the differential operator\begin{align}
\widehat B_3:=u (u+1) ( u+9)\frac{\D^2}{\D u^2}+(3 u^2 + 20 u + 9)\frac{\D}{\D u}+(u+3),\label{eq:defn_B3}
\end{align}and we have an inhomogeneous differential equation\begin{align}
\widehat B_3\left\{ \int_0^\infty J_0(\sqrt{u}t)[K_0(t)]^3t\D t \right\}=\frac{3}{2}.
\end{align}Meanwhile, differentiating under the integral sign and integrating by parts \cite[cf.][\S9]{Vanhove2014Survey}, we can verify that \begin{align}
\widehat B_3\left\{ \int_0^\infty Y_0(\sqrt{u}t)\pi I_{0}(t)[K_0(t)]^2t\D t \right\}=1.
\end{align}

In view of the analysis above, the left-hand side of \eqref{eq:JKKK_YIKK} must be equal to\begin{align}
\frac{\eta (3 w)[ \eta (2 w)]^6}{[\eta (w)]^3 [\eta (6 w)]^2}[ k_0 +k_1(2w+1)]
\end{align}where $k_0$ and $k_1$ are constants. Since  $ Y_0(xt)=\frac{2}{\pi}\log(xt)+O(1)$ as $ x\to0^+$, and $ \int_0^\infty I_0(t)[K_0(t)]^2t\D t=\frac{\pi}{3\sqrt{3}}$ \cite[][(23)]{Bailey2008}, we can determine $ k_1=\frac{\pi^{2}}{2\sqrt{3}i}$ immediately. Superimposing with \eqref{eq:IKK_Hankel_mod}, we obtain\begin{align}&
\int_0^\infty J_0\left(i\left[\frac{\theta\left(1-\frac{1}{3w}\right)}{\theta\left(3-\frac{1}{w}\right)}\right]^{2}t\right)[K_0(t)]^3t\D t-\frac{3\pi}{2i}\int_0^\infty H_0^{(1)}\left(i\left[\frac{\theta\left(1-\frac{1}{3w}\right)}{\theta\left(3-\frac{1}{w}\right)}\right]^{2}t\right)I_0(t)[K_0(t)]^2t\D t\notag\\={}&\frac{\eta (3 w)[ \eta (2 w)]^6}{[\eta (w)]^3 [\eta (6 w)]^2} \left(k_0 +\frac{\pi^{2}w}{\sqrt{3}i}\right),
\end{align}which analytically continues to\begin{align}&
\int_0^\infty I_0\left(\left[\frac{\theta\left(1-\frac{1}{3w}\right)}{\theta\left(3-\frac{1}{w}\right)}\right]^{2}t\right)[K_0(t)]^3t\D t+3\int_0^\infty K_0\left(\left[\frac{\theta\left(1-\frac{1}{3w}\right)}{\theta\left(3-\frac{1}{w}\right)}\right]^{2}t\right)I_0(t)[K_0(t)]^2t\D t\notag\\={}&\frac{\eta (3 w)[ \eta (2 w)]^6}{[\eta (w)]^3 [\eta (6 w)]^2} \left(k_0 +\frac{\pi^{2}w}{\sqrt{3}i}\right)
\end{align}for $ w/i>0$. Taking the $ w\to i0^+$ limit, and recalling the evaluation  $ \int_0^\infty I_0(t)[K_0(t)]^3t\D t=\pi^2/16$ from \cite[][(54)]{Bailey2008}, we find $k_0=0$.

Thus far, we have confirmed both  \eqref{eq:JKKK_YIKK}  and \eqref{eq:IKKK_KIKK}. \item We note that the expression $ \int_0^\infty I_0(xt)[K_0(t)]^3t\D t+3\int_0^\infty K_0(xt)I_0(t)[K_0(t)]^2t\D t$ is continuous with respect to $x\in(0,3)$, and the right-hand side of  \eqref{eq:IKKK_KIKK} is  smooth in a neighborhood of $ i0^+$. Therefore, the validity of  \eqref{eq:IKKK_KIKK} extends to the geodesic   $ w=(1+e^{i\varphi})/6,\varphi\in(0,\pi)$, by analytic continuation.

  Adding up \eqref{eq:IKKK_JY_etc} and \eqref{eq:KIKK_JY_etc}, we  derive \eqref{eq:JJJY_YJJJ} from  \eqref{eq:IKKK_KIKK}. \qedhere\end{enumerate}\end{proof}
\subsection{Eichler integrals via Hankel fusions}We can now use the modular parametrizations in Proposition \ref{prop:Hankel_mod}  to fuse  Hankel transforms into  Feynman integrals involving 6 Bessel factors, as planned in \eqref{eq:IIKKKK_PPT}.\begin{proposition}[Eichler formulation of $ \IKM(2,4;1)$]\label{prop:IKM241}We have \begin{align}
\int_0^\infty [I_0(t)]^{2}[K_0(t)]^4t\D t=\frac{\pi^{3}i}{3}\int_{-\frac{1}{2}}^{-\frac{1}{2}+i\infty}[\eta(w)\eta(2w)\eta(3w)\eta(6w)]^{2}\D w.\label{eq:IIKKKK_Eichler}
\end{align}\end{proposition}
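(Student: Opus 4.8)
The plan is to combine the Parseval--Plancherel identity \eqref{eq:IIKKKK_PPT} with the modular parametrization of the Hankel transform $h$ supplied by Proposition \ref{prop:Hankel_mod}(a), thereby converting the integral over $x\in(0,\infty)$ into a contour integral over $w$ along the vertical line $\R w=-\tfrac12$. Recall that \eqref{eq:IIKKKK_PPT} gives $\int_0^\infty[I_0(t)]^2[K_0(t)]^4t\D t=\int_0^\infty[h(x)]^2x\D x$, while \eqref{eq:IKK_Hankel_mod} expresses $h$ along the substitution
$$x=i\left[\frac{\theta\left(1-\frac{1}{3w}\right)}{\theta\left(3-\frac{1}{w}\right)}\right]^{2},\qquad w=-\tfrac12+iy,\ y>0,$$
as $h(x)=\frac{\pi}{3\sqrt3}G(w)$ with $G(w)=\frac{\eta(3w)[\eta(2w)]^6}{[\eta(w)]^3[\eta(6w)]^2}$. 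First I would record that this substitution is a bijection of $y\in(0,\infty)$ onto $x\in(0,\infty)$ (established in the course of proving Proposition \ref{prop:Hankel_mod}(a)), with $x\to0$ as $y\to\infty$ and $x\to\infty$ as $y\to0^+$; the cusp limit is consistent with $G(w)\to1$ as $q=e^{2\pi iw}=-e^{-2\pi y}\to0$ together with the value $h(0)=\frac{\pi}{3\sqrt3}$ read off from \eqref{eq:JIKK_2F1}.

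The heart of the matter is the change-of-variables Jacobian. Substituting $[h(x)]^2=\frac{\pi^2}{27}[G(w)]^2$, the Parseval integrand becomes $\frac{\pi^2}{27}[G(w)]^2\,x\,\frac{\D x}{\D w}\,\D w$, so that the claim \eqref{eq:IIKKKK_Eichler} is equivalent to the weight-$4$ eta-quotient identity
$$x\frac{\D x}{\D w}=-9\pi i\,\frac{[\eta(w)]^8[\eta(6w)]^6}{[\eta(2w)]^{10}},$$
since a direct cancellation gives $f_{4,6}(w)/[G(w)]^2=[\eta(w)]^8[\eta(6w)]^6/[\eta(2w)]^{10}$, where $f_{4,6}(w)=[\eta(w)\eta(2w)\eta(3w)\eta(6w)]^2$. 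To establish this I would first rewrite $x^2=-\big[\theta(1-\tfrac{1}{3w})/\theta(3-\tfrac1w)\big]^4$ as an eta quotient in $w$, reusing the chain of Ramanujan theta--eta identities already deployed in the proof of Proposition \ref{prop:Hankel_mod}(a) (notably \eqref{eq:theta_cubic_id} and \eqref{eq:theta_eta_x9}), and then differentiate with the logarithmic-derivative formulas for $\eta$-quotients on $\varGamma_0(6)_{+3}$, exactly as in Lemma \ref{lm:Jac_xz} via the Chan--Zudilin identities \eqref{eq:logWeberf2_deriv'} and \eqref{eq:logWeberf2_deriv''}. Equivalently, because $x^2$ is a modular function, $\frac12\,\D(x^2)/\D w$ is automatically a meromorphic weight-$2$ form; I would then verify it is holomorphic with the correct divisor, so that matching it against the weight-$2$ eta quotient on the right reduces to comparing leading $q$-coefficients at the cusp.

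Finally I would assemble the pieces and fix the orientation. Because $x\!:0\!\to\!\infty$ corresponds to $w\!:-\tfrac12+i\infty\!\to\!-\tfrac12$, reversing the contour supplies the overall sign that converts the prefactor $-9\pi i$ into the stated $+\frac{\pi^3i}{3}$ after multiplying by $\frac{\pi^2}{27}[G(w)]^2$; that is, $\int_0^\infty[h(x)]^2x\D x=-\int_{-1/2}^{-1/2+i\infty}\frac{\pi^2}{27}[G(w)]^2\,x\frac{\D x}{\D w}\,\D w=\frac{\pi^3i}{3}\int_{-1/2}^{-1/2+i\infty}f_{4,6}(w)\D w$. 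I expect the main obstacle to be the Jacobian identity: carrying the theta-to-eta reduction of $x^2$ through cleanly and then differentiating to land \emph{exactly} on $[\eta(w)]^8[\eta(6w)]^6/[\eta(2w)]^{10}$ with the correct constant and sign. A secondary but necessary point is convergence at both ends of the contour — the integrand decays like $q\sim-e^{-2\pi y}$ at the upper cusp, and one must confirm integrability as $y\to0^+$, i.e.\ as $x\to\infty$.
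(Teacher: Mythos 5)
Your proposal is correct and follows essentially the same route as the paper: Parseval--Plancherel fusion of the Hankel transform \eqref{eq:IKK_Hankel_mod} with itself, followed by the change of variables whose Jacobian the paper computes as $\frac{\D (x^2)}{\D w}=-18\pi i\,[\eta(w)]^8[\eta(6w)]^6/[\eta(2w)]^{10}$ (equivalently your $x\,\frac{\D x}{\D w}=-9\pi i\,[\eta(w)]^8[\eta(6w)]^6/[\eta(2w)]^{10}$), obtained exactly as you outline via the theta-to-eta reduction \eqref{eq:theta_cubic_id}, \eqref{eq:theta_eta_x9} and the Chan--Zudilin logarithmic derivatives \eqref{eq:logWeberf2_deriv'}, \eqref{eq:logWeberf2_deriv''}. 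Your constants, the cancellation $f_{4,6}(w)/[G(w)]^2=[\eta(w)]^8[\eta(6w)]^6/[\eta(2w)]^{10}$, and the orientation-reversal sign all match the paper's computation.
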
\begin{proof}By the Parseval--Plancherel theorem for Hankel transforms, we have\begin{align}
\int_0^\infty [I_0(t)]^{2}[K_0(t)]^4t\D t=\frac{1}{2}\int_0^\infty\left|\int_0^\infty J_{0}(\sqrt{u}t)I_0(t)[K_0(t)]^2t\D t\right|^2\D u.
\end{align}Here, for $ \tau=2-\frac{1}{6w}$, the modular parameter [cf.~\eqref{eq:theta_cubic_id} and \eqref{eq:theta_eta_x9}]\begin{align} u={}&x^2=-\left[\frac{\theta\left(1-\frac{1}{3w}\right)}{\theta\left(3-\frac{1}{w}\right)}\right]^{4}=\left[ 1-\frac{\theta\left( \frac{1}{3}-\frac{1}{9w} \right)}{\theta\left( 3-\frac{1}{w} \right)} \right]^3-1\notag\\={}&8\left[\frac{\eta(\tau)}{\eta(2\tau)}\right]^{3}\left[ \frac{\eta(6\tau)}{\eta(3\tau)} \right]^{9}-1=\left[\frac{\eta(6w)}{\eta(3w)}\right]^{3}\left[ \frac{\eta(w)}{\eta(2w)} \right]^{9}-1\end{align}satisfies [cf.~\eqref{eq:logWeberf2_deriv'} and \eqref{eq:logWeberf2_deriv''}]\begin{align}
\frac{\D u}{\D w}=-18\pi i\left[\frac{\eta(6w)}{\eta(3w)}\right]^{3}\left[ \frac{\eta(w)}{\eta(2w)} \right]^{9}\frac{ [\eta (3 w ) \eta (6w )]^3}{\eta (w ) \eta (2w )}=-18\pi i\frac{[\eta (w)]^8 [\eta (6 w)]^6}{[\eta (2w)]^{10}},\label{eq:uw_Jac}
\end{align}so \eqref{eq:IIKKKK_Eichler} follows immediately.\end{proof}\begin{proposition}[Eichler formulation of $ \JYM(6,0;1)$]\label{prop:JYM601}We have \begin{align}
\int_0^\infty [J_0(t)]^6t\D t={}&\frac{12}{\pi i}\int_{0}^{i\infty}[\eta(w)\eta(2w)\eta(3w)\eta(6w)]^{2}\D  w\notag\\{}&-\frac{6}{\pi i}\int_{\frac12}^{\frac{1}{2}+i\infty}[\eta(w)\eta(2w)\eta(3w)\eta(6w)]^{2}\D w.\label{eq:JJJJJJ_Eichler}
\end{align}\end{proposition}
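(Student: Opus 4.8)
The plan is to adapt the proof of Proposition~\ref{prop:IKM241}, now pairing the ``random walk'' Hankel transform $\widetilde h$ with itself. First I would combine the support statement $p_3(x)/x=0$ for $x>3$ (Proposition~\ref{prop:Hankel_mod}(b)) with the Parseval--Plancherel identity \eqref{eq:IIKKKK_PPT}, reducing the claim to
\begin{align}
\int_0^\infty[J_0(t)]^6t\D t=\int_0^1[\widetilde h(x)]^2x\D x+\int_1^3[\widetilde h(x)]^2x\D x.\notag
\end{align}
On $(0,1)$ I substitute \eqref{eq:p3_small} and on $(1,3)$ I substitute \eqref{eq:p3_medium}, converting $x\D x=\tfrac12\D(x^2)$ with the Jacobian \eqref{eq:uw_Jac}; because the present $\JYM$ normalization has $x^2=-u$ in the notation of Proposition~\ref{prop:IKM241}, the sign of \eqref{eq:uw_Jac} flips and $x\D x=9\pi i\,[\eta(w)]^8[\eta(6w)]^6[\eta(2w)]^{-10}\D w$. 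A direct count of eta powers then collapses the pullback of $[\widetilde h(x)]^2x\D x$ to $\tfrac{12i}{\pi}f_{4,6}(w)\D w$ on $(0,1)$ and to $\tfrac{12i}{\pi}(1-3w)^2f_{4,6}(w)\D w$ on $(1,3)$, the extra factor $(1-3w)^2$ being the square of the prefactor in \eqref{eq:p3_medium}.

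Next I would fix the two $w$-contours and their orientations. One checks that $x\to0$ as $w\to i\infty$, while the prefactor $1-3w$ forces $x=3$ at $w=\tfrac13$; hence $x=1$ corresponds to the common cusp $w=0$. Thus, as $x$ increases from $0$ to $3$, the parameter first runs down the imaginary axis from $i\infty$ to $0$, then sweeps the geodesic $w=(1+e^{i\varphi})/6$ from $\varphi=\pi$ ($w=0$) to $\varphi=0$ ($w=\tfrac13$). The first piece therefore contributes $\tfrac{12i}{\pi}\int_{i\infty}^{0}f_{4,6}(w)\D w=\tfrac{12}{\pi i}\int_0^{i\infty}f_{4,6}(w)\D w$, which is exactly the first term of \eqref{eq:JJJJJJ_Eichler}.

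For the second piece I would apply the Fricke involution $w\mapsto-1/(6w)$: from $\eta(-1/\tau)=\sqrt{\tau/i}\,\eta(\tau)$ one obtains $f_{4,6}(-1/(6w))=36w^4f_{4,6}(w)$, the geodesic maps onto the line $\R w=-\tfrac12$, and $(1-3w)^2f_{4,6}(w)\D w$ pulls back to $\tfrac32(2w+1)^2f_{4,6}(w)\D w$. By periodicity $f_{4,6}(w+1)=f_{4,6}(w)$ the contour $\R w=-\tfrac12$ may be replaced by $\R w=\tfrac12$ (turning $(2w+1)^2$ into $(2w-1)^2$), so that, tracking orientation, the second piece becomes $-\tfrac{18i}{\pi}\int_{1/2}^{1/2+i\infty}(2w-1)^2f_{4,6}(w)\D w$. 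The decisive step is to reduce this weight-two Eichler integral to weight zero. Writing $f_{4,6}=[Z_{6,3}]^2X_{6,3}$ and combining the $\varGamma_0(6)_{+3}$-modularity of $Z_{6,3}$ with the $\widehat W_3$-invariance of $X_{6,3}$ in \eqref{eq:X63_symm} gives $f_{4,6}(\widehat W_3w)=9(2w-1)^4f_{4,6}(w)$; since $\widehat W_3$ preserves the line $\R w=\tfrac12$ setwise, acting there by $\tfrac12+it\mapsto\tfrac12+\tfrac{i}{12t}$, the substitution $w\mapsto\widehat W_3w$ in $\int_{1/2}^{1/2+i\infty}(2w-1)^jf_{4,6}(w)\D w$ interchanges the exponents $j$ and $2-j$ up to an explicit constant, and the case $j=0$ yields $\int_{1/2}^{1/2+i\infty}[1+3(2w-1)^2]f_{4,6}(w)\D w=0$. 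Substituting this relation collapses the second piece to $-\tfrac{6}{\pi i}\int_{1/2}^{1/2+i\infty}f_{4,6}(w)\D w$, and adding the two pieces proves \eqref{eq:JJJJJJ_Eichler}.

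I expect the weight reduction at the end to be the crux: the degree-two factor $(1-3w)^2$ inherited from \eqref{eq:p3_medium} would otherwise contribute independent weight-one and weight-two periods of $f_{4,6}$, and it is precisely the Atkin--Lehner relation $f_{4,6}(\widehat W_3w)=9(2w-1)^4f_{4,6}(w)$ (equivalently the factorization $f_{4,6}=[Z_{6,3}]^2X_{6,3}$ together with Lemma~\ref{lm:X63}) that annihilates the unwanted combination along $\R w=\tfrac12$. The remaining work is bookkeeping of orientations and powers of $i$; convergence at the cusps $0,\tfrac13,i\infty$ is automatic since $f_{4,6}$ is a cusp form, so all contour manipulations are legitimate.
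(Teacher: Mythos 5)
Your proof is correct, and its first half coincides with the paper's proof: the same Parseval--Plancherel fusion of $\widetilde h$ with itself, the same vanishing of $p_3(x)/x$ for $x>3$, the same Jacobian \eqref{eq:uw_Jac} (with the sign correctly flipped to account for $x^2=-u$), and the same pullbacks $\tfrac{12i}{\pi}f_{4,6}(w)\D w$ on $(0,1)$ and $\tfrac{12i}{\pi}(1-3w)^{2}f_{4,6}(w)\D w$ along the arc, with correct orientations throughout. Where you genuinely diverge is the conversion of the arc integral. The paper makes a \emph{single} substitution $w=\widehat W_{2}z$ with $\widehat W_{2}=\tfrac{1}{\sqrt2}\left(\begin{smallmatrix}2&-1\\6&-2\end{smallmatrix}\right)$, which maps the semicircle onto the ray $\R z=\tfrac12$ and, via $f_{4,6}(\widehat W_{2}z)=4(3z-1)^{4}f_{4,6}(z)$ together with $3w-1=-\tfrac{1}{2(3z-1)}$ and $\D w=\tfrac{\D z}{2(3z-1)^{2}}$, absorbs the quadratic factor \emph{exactly}, landing on $-\tfrac{6}{\pi i}\int_{1/2}^{1/2+i\infty}f_{4,6}(z)\D z$ in one stroke with no residual polynomial periods. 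You factor this one move into two: the Fricke involution (arc $\to$ ray $\R w=-\tfrac12$, picking up $(2w+1)^{2}$), then the $\widehat W_{3}$ period symmetry on $\R w=\tfrac12$ to trade the weight-two period for the weight-zero one. Since $\widehat W_{2}\widehat W_{3}$ agrees with the Fricke map up to a translation in $\varGamma_0(6)$, your two steps compose to the paper's single step; the ``weight reduction crux'' you identify is thus an artifact of choosing $W_6$ rather than $W_2$ --- in the paper's route no such reduction ever arises. Your detour does produce, as a correct byproduct the paper never needs, the relation $\int_{1/2}^{1/2+i\infty}\left[1+3(2w-1)^{2}\right]f_{4,6}(w)\D w=0$, i.e.\ the $j=0$ case of $\int_{1/2}^{1/2+i\infty}(2w-1)^{j}f_{4,6}(w)\D w=-(-1)^{j}3^{1-j}\int_{1/2}^{1/2+i\infty}(2w-1)^{2-j}f_{4,6}(w)\D w$. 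Two minor remarks: your derivation of $f_{4,6}(\widehat W_{3}w)=9(2w-1)^{4}f_{4,6}(w)$ from $f_{4,6}=[Z_{6,3}]^{2}X_{6,3}$ is insensitive to the sign of the $\widehat W_{3}$-eigenvalue of $Z_{6,3}$ because $Z_{6,3}$ enters squared (and it can be cross-checked by composing $f_{4,6}(-1/(6w))=36w^{4}f_{4,6}(w)$ with the paper's $\widehat W_{2}$ relation); and convergence should also be noted at the cusp $\tfrac12$, the lower endpoint of your final contour, not only at $0$, $\tfrac13$, $i\infty$ --- harmless, since $f_{4,6}$ is cuspidal there as well.
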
\begin{proof}Applying the arguments in the last proposition directly to \eqref{eq:p3_small} and \eqref{eq:p3_medium}, we obtain\begin{align}
\int_0^\infty [J_0(t)]^6t\D t={}&\frac{12}{\pi i}\int_{0}^{i\infty}[\eta(w)\eta(2w)\eta(3w)\eta(6w)]^{2}\D w\notag\\{}&+\frac{12}{\pi i}\int^{0+i0^{+}}_{\frac{1}{3}+i0^{+}}[\eta(w)\eta(2w)\eta(3w)\eta(6w)]^{2}(3w-1)^{2}\D w,
\end{align}where the second integral runs along the semi-circular path $ w=(1+e^{i\varphi})/6,\varphi\in(0,\pi)$.

Before arriving at the expression in \eqref{eq:JJJJJJ_Eichler}, we need to perform modular transformations on the last integral.

Towards this end, we recall from Chan--Zudilin \cite{ChanZudilin2010}  that the group $ \varGamma_0(6)_{+2}=\langle \varGamma_0(6),\widehat W_2\rangle$,  constructed by adjoining $ \widehat W_2=\frac{1}{\sqrt{2}}\left(\begin{smallmatrix}2&-1\\6&-2\end{smallmatrix}\right)$ to $ \varGamma_0(6)$, enjoys a Hauptmodul \begin{align}
X_{6,2}(z)=\left[ \frac{\eta(3z)\eta(6z)}{\eta(z)\eta(2z)} \right]^4\label{eq:X62_defn}
\end{align}   and a weight-2 modular form\begin{align}
Z_{6,2}(z)=\frac{[\eta(z)\eta(2z)]^{3}}{\eta(3z)\eta(6z)}.\label{eq:Z62_defn}
\end{align}With these notations, we see that $[\eta(z)\eta(2z)\eta(3z)\eta(6z)]^2=[Z_{6,2}(z)]^2 X_{6,2}(z) $ is a modular form of weight 4 on $ \varGamma_0(6)_{+2}$. In particular, we have \begin{align} [\eta(\widehat W_2z)\eta(2\widehat W_2z)\eta(3\widehat W_2z)\eta(6\widehat W_2z)]^2=4(3z-1)^{4}[\eta(z)\eta(2z)\eta(3z)\eta(6z)]^2.\end{align}Consequently, a variable substitution $ w=\widehat W_2z$ brings us \begin{align}&
\frac{12}{\pi i}\int^{0+i0^{+}}_{\frac{1}{3}+i0^{+}}[\eta(w)\eta(2w)\eta(3w)\eta(6w)]^{2}(3w-1)^{2}\D w\notag\\={}&-\frac{6}{\pi i}\int_{\frac12}^{\frac{1}{2}+i\infty}[\eta(z)\eta(2z)\eta(3z)\eta(6z)]^{2}\D z,
\end{align}thereby completing the proof.\end{proof}

David Broadhurst considered the following modular form of weight 4 and level 6\begin{align}
f_{4,6}(z)=[\eta(z)\eta(2z)\eta(3z)\eta(6z)]^2=\sum_{n=1}^\infty a_{4,6}(n)e^{2\pi inz},
\end{align} based on a suggestion from Francis Brown at Les Houches in 2010. Drawing on the work of Hulek \textit{et al.}~\cite{Hulek2001} that related the aforementioned modular form to a Kloosterman problem, Broadhurst conjectured that $  \IKM(2,4;1)$ is equal to $ \frac32 L(f_{4,6},3)$ \cite[][(110)]{Broadhurst2016}, where the special $L$-value can be written explicitly as \cite[][(108)]{Broadhurst2016}\begin{align}
L(f_{4,6},3):=\sum_{n=1}^\infty\frac{a_{4,6}(n)}{n^3}\left( 1+\frac{2\pi n}{\sqrt{6}}+\frac{2\pi^2 n^2}{3} \right)e^{-2\pi n/\sqrt{6}}.\label{eq:Lf46_3_defn}
\end{align}
 We now verify Broadhurst's conjecture.\begin{theorem}[$ \IKM(2,4;1)$ as a critical $L$-value]We have \begin{align}
 \IKM(2,4;1)=\int_0^\infty [I_0(t)]^{2}[K_0(t)]^4t\D t=\frac{3}{2}L(f_{4,6},3).
\end{align}\end{theorem}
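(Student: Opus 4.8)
The plan is to start from the Eichler-integral representation \eqref{eq:IIKKKK_Eichler} furnished by Proposition \ref{prop:IKM241}, namely $\IKM(2,4;1)=\frac{\pi^{3}i}{3}\int_{-1/2}^{-1/2+i\infty}f_{4,6}(w)\D w$, and to extract the critical $L$-value by combining the $q$-expansion of $f_{4,6}$ with the Hecke structure and the functional equation of its $L$-series. First I would write $f_{4,6}(w)=\sum_{n\geq1}a_{4,6}(n)e^{2\pi inw}$ and note that along the vertical line $w=-\tfrac12+iy$ one has $e^{2\pi inw}=(-1)^ne^{-2\pi ny}$. Integrating term by term in $y$ (the interchange being legitimate because $f_{4,6}$ is cuspidal, decaying exponentially as $y\to\infty$ and, in the local parameter at the cusp $w=-\tfrac12$, as $y\to0^{+}$; the resulting conditionally convergent sum matches the Dirichlet-series continuation) then yields
\begin{align}
\int_{-1/2}^{-1/2+i\infty}f_{4,6}(w)\D w=\frac{i}{2\pi}\sum_{n=1}^\infty\frac{(-1)^na_{4,6}(n)}{n}.\notag
\end{align}

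The next step is to evaluate this alternating series. Because $S_4(\varGamma_0(1))$, $S_4(\varGamma_0(2))$ and $S_4(\varGamma_0(3))$ all vanish, the form $f_{4,6}$ is new, and it spans the one-dimensional space $S_4(\varGamma_0(6))$, so it is automatically a normalized Hecke eigenform. As the prime $2$ exactly divides the level and $a_{4,6}(2)=-2$, one has $a_{4,6}(2^{j})=(-2)^{j}$ and the Euler factor at $2$ equals $(1+2^{1-s})^{-1}$. Splitting $n=2^{j}m$ with $m$ odd and resumming the geometric series in $j$, I would derive the clean identity $\sum_{n\geq1}(-1)^na_{4,6}(n)n^{-s}=-(1+2^{2-s})L(f_{4,6},s)$, whose value at $s=1$ is $-3L(f_{4,6},1)$. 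Substituting back, the prefactors collapse (using $i^2=-1$) to $\IKM(2,4;1)=\frac{\pi^{2}}{2}L(f_{4,6},1)$, which is the representation recorded in \eqref{eq:IKM241_eta_int}.

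It remains to pass from $L(f_{4,6},1)$ to $L(f_{4,6},3)$ through the functional equation. Applying $\eta(-1/\tau)=\sqrt{\tau/i}\,\eta(\tau)$ to each of the four eta factors, I would verify the Fricke relation $f_{4,6}(-1/(6w))=36w^{4}f_{4,6}(w)$, so that $f_{4,6}$ is an eigenform of the Fricke involution $w\mapsto-1/(6w)$ with eigenvalue $+1$; hence the completed $L$-function $\Lambda(s)=6^{s/2}(2\pi)^{-s}\Gamma(s)L(f_{4,6},s)$ satisfies $\Lambda(s)=\Lambda(4-s)$. Comparing $s=1$ with $s=3$ gives $L(f_{4,6},1)=\frac{3}{\pi^{2}}L(f_{4,6},3)$, whence $\frac{\pi^{2}}{2}L(f_{4,6},1)=\frac32 L(f_{4,6},3)$, which is the asserted identity. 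Finally, to match Broadhurst's explicit definition, I would confirm that the rapidly convergent series \eqref{eq:Lf46_3_defn} really represents $L(f_{4,6},3)$: split the Mellin integral $\int_0^\infty f_{4,6}(iy)y^{2}\D y=(2\pi)^{-3}\Gamma(3)L(f_{4,6},3)$ at the Fricke fixed point $y=1/\sqrt6$, fold the range $(0,1/\sqrt6)$ back with the Fricke relation to obtain $\int_{1/\sqrt6}^{\infty}f_{4,6}(iy)(y^{2}+\tfrac16)\D y$, and evaluate the incomplete $\Gamma$-integrals termwise.

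The main obstacle is the middle step: the contour integral along $\R w=-\tfrac12$ produces an \emph{alternating} Dirichlet series rather than $L(f_{4,6},s)$ itself, and a direct contour shift to the imaginary axis is blocked by the cusp at $w=\tfrac12$. Converting the alternating sum into a rational multiple of $L(f_{4,6},1)$ therefore genuinely requires the Atkin--Lehner/eigenform input at the prime $2$ (equivalently, the eta-quotient identity $f_{4,6}(\widehat W_2z)=4(3z-1)^4f_{4,6}(z)$ already invoked in the proof of Proposition \ref{prop:JYM601}); once this and the Fricke eigenvalue are in hand, the remaining arithmetic is routine.
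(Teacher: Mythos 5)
Your route is genuinely different from the paper's in its middle step and, modulo one analytic misstatement discussed below, it is sound. The paper never touches Hecke theory: it evaluates the same Eichler integral \eqref{eq:IIKKKK_Eichler} by playing Proposition \ref{prop:IKM241} off against Proposition \ref{prop:JYM601} and the Wick rotation \eqref{eq:IIKKKK_JJJJJJ}, which together yield $-\pi^{3}i\int_{0}^{i\infty}f_{4,6}(w)\D w=\frac{\pi^{4}}{12}\JYM(6,0;1)-\frac{3}{2}\IKM(2,4;1)=\IKM(2,4;1)$; the Fricke involution then matches this with Broadhurst's series \eqref{eq:Lf46_3_defn}. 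In effect both proofs rest on the hidden relation $\int_{-1/2}^{-1/2+i\infty}f_{4,6}(w)\D w=-3\int_{0}^{i\infty}f_{4,6}(w)\D w$: the paper extracts the factor $-3$ from a second, independent Bessel-moment computation (the Hankel fusion behind \eqref{eq:JJJJJJ_Eichler}, which already uses the Atkin--Lehner transformation of $f_{4,6}$ under $\widehat W_2$), while you extract it arithmetically from the newform structure at the ramified prime $2$ --- note that $U_2f_{4,6}=-2f_{4,6}$, i.e.\ $a_{4,6}(2n)=-2a_{4,6}(n)$ for all $n$, packages your splitting $n=2^{j}m$ even more quickly. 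Your approach buys brevity and transferability to other levels at the cost of external inputs (the dimension of $S_4(\varGamma_0(6))$, newform theory at $p\,\|\,N$, the completed functional equation), which the paper deliberately avoids so as to stay within self-contained Bessel-moment manipulations; your Fricke computation $f_{4,6}(-1/(6w))=36w^{4}f_{4,6}(w)$, the consequent $\Lambda(s)=\Lambda(4-s)$ and $L(f_{4,6},1)=\frac{3}{\pi^{2}}L(f_{4,6},3)$, and the closing verification of \eqref{eq:Lf46_3_defn} by folding the Mellin integral at $y=1/\sqrt6$ all check out and agree with \eqref{eq:IKM241_eta_int}.

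The one genuine defect is your claim that term-by-term integration produces a ``conditionally convergent sum'' $\sum_{n\geq1}(-1)^{n}a_{4,6}(n)/n$. That series does not converge at all: by Deligne's bound together with Rankin--Selberg, $|a_{4,6}(n)|$ is of size $n^{3/2}$ on average, so the terms $(-1)^{n}a_{4,6}(n)/n$ have average size $n^{1/2}$ and do not even tend to zero; the exponential decay of $f_{4,6}$ at the two endpoints of the contour justifies no such termwise identity at $s=1$. The repair is the standard one you only gesture at and should carry out explicitly: for $\R s>5/2$ one has $\int_{0}^{\infty}f_{4,6}\bigl(-\tfrac12+iy\bigr)y^{s-1}\D y=(2\pi)^{-s}\Gamma(s)\sum_{n=1}^{\infty}(-1)^{n}a_{4,6}(n)n^{-s}$; the left-hand side is entire in $s$ (exponential decay as $y\to0^{+}$, since $-\tfrac12$ is a cusp, and as $y\to\infty$), the right-hand side continues via your Euler-factor identity $\sum_{n\geq1}(-1)^{n}a_{4,6}(n)n^{-s}=-(1+2^{2-s})L(f_{4,6},s)$ and the entirety of $\Lambda$, and setting $s=1$ gives $\int_{-1/2}^{-1/2+i\infty}f_{4,6}(w)\D w=-\frac{3i}{2\pi}L(f_{4,6},1)$, whence $\IKM(2,4;1)=\frac{\pi^{2}}{2}L(f_{4,6},1)$ as desired. (The same caveat applies to reading $\int_{0}^{i\infty}f_{4,6}(w)\D w$ as $\frac{i}{2\pi}L(f_{4,6},1)$; by contrast, the paper only integrates termwise the absolutely convergent completed series truncated at the Fricke point $i/\sqrt6$, which is why its corresponding step needs no continuation argument.) With this emendation your proof is complete.
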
\begin{proof}Judging from termwise integration of uniformly  convergent series, we note that Broadhurst's conjecture essentially says that \begin{align}
\int_0^\infty [I_0(t)]^{2}[K_0(t)]^4t\D t=6\pi^{3}i\int_{i/\sqrt{6}}^{i\infty}f_{4,6}(w)\left( w^{2} -\frac{1}{6}\right)\D w.\label{eq:IIKKKK_L_value'}
\end{align}What we will do is to show that this statement is consistent with our results in  Propositions \ref{prop:IKM241} and \ref{prop:JYM601}.
Here, one can prove\begin{align}
6\pi^{3}i\int_{i/\sqrt{6}}^{i\infty}f_{4,6}(w)w^{2}\D w=-\pi^{3}i\int^{i/\sqrt{6}}_{0}f_{4,6}(z)\D z
\end{align}by a change of variable $ w=-1/{(6z)}$ and the  modular transformation $ \eta(-1/\tau)=\sqrt{\tau/i}\eta(\tau)$, so the right-hand side of \eqref{eq:IIKKKK_L_value'} is the same as $-\pi^{3}i\int_{0}^{i\infty}f_{4,6}(w)\D w. $
However, according to Propositions \ref{prop:IKM241} and \ref{prop:JYM601}, we have \begin{align}
-\pi^{3}i\int_{0}^{i\infty}f_{4,6}(w)\D w={}&\frac{\pi^4}{12}\int_0^\infty[J_0(t)]^6 t\D t-\frac{3}{2}\int_0^\infty [I_0(t)]^{2}[K_0(t)]^4t\D t.
\end{align}Meanwhile, the Wick rotation in \eqref{eq:IIKKKK_JJJJJJ} tells us that this is precisely $ \IKM(2,4;1)$, as conjectured by Broadhurst.
\end{proof}Before applying Proposition \ref{prop:Y_K_transforms} to  the 4-loop sunrise diagram $ \IKM(1,5;1)$, we need a cancelation formula related to Hankel and $ Y$-transforms.\begin{lemma}[Hilbert cancelation]\label{lm:HT_JY}Consider a   continuous function $ F(t),t>0$, whose Kramers--Kronig transform\begin{align}
(\widehat {\mathscr K}F)(\tau):=\int_{-\infty}^\infty\frac{F(|t|)|t|\D t}{\pi(\tau-t)},\quad \tau\in\mathfrak H
\end{align}is well-defined, and has the following asymptotic behavior:\begin{align}\begin{cases}
(\widehat {\mathscr K}F)(\tau)=O(\sqrt{\tau}),&\text{as }|\tau|\to0,\\(\widehat {\mathscr K}F)(\tau)=O\left( \dfrac{1}{|\tau|} \right),&\text{as }|\tau|\to\infty.\end{cases}\end{align} Suppose that  $ \int_0^\infty J_0(xt) F(t)t\D t,x\in(0,\infty)$ and   $ \int_0^\infty Y_0(xt) F(t)t\D t,x\in(0,\infty)$   are  both well-defined,  then \begin{align}\int_0^\infty \left[ \int_0^\infty J_0(xt) F(t)t\D t \right]\left[\int_0^\infty Y_0(x\tau) F(\tau)\tau\D \tau\right]x\D x
=0.
\end{align}\end{lemma}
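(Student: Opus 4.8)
The plan is to compute the inner $Y$-transform of the Hankel transform $g(x):=\int_0^\infty J_0(xt)F(t)t\,\D t$ explicitly, recognize it as a boundary value of the Kramers--Kronig transform $\widehat{\mathscr K}F$, and then exploit the antisymmetry of the Cauchy kernel to collapse the whole expression to zero. To bypass the conditional convergence of the $Y$-transform, I would first work at a complex spectral parameter $\tau_c\in\mathfrak H$, where $H_0^{(1)}(x\tau_c)$ decays exponentially as $x\to+\infty$. Applying Green's identity on $(0,\infty)$ to the pair $J_0(xt)$ and $H_0^{(1)}(x\tau_c)$, both annihilated by the radial operator $\partial_x^2+x^{-1}\partial_x$ with eigenvalues $-t^2$ and $-\tau_c^2$, the only surviving boundary contribution is the logarithmic one at $x=0$, and one obtains the Lommel-type evaluation
\[\int_0^\infty J_0(xt)H_0^{(1)}(x\tau_c)x\,\D x=\frac{2i}{\pi(\tau_c^2-t^2)},\qquad \I\tau_c>0.\]
Because this $x$-integral now converges absolutely, Fubini lets me fuse it against $F(t)t$ and collapse the $t$-integral into the defining kernel of $\widehat{\mathscr K}$:
\[\int_0^\infty g(x)H_0^{(1)}(x\tau_c)x\,\D x=\frac{2i}{\pi}\int_0^\infty\frac{F(t)t}{\tau_c^2-t^2}\,\D t=\frac{i}{\tau_c}(\widehat{\mathscr K}F)(\tau_c).\]

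Next I would let $\tau_c\to\tau+i0^+$ for $\tau>0$. On the left, $H_0^{(1)}=J_0+iY_0$ separates the expression into $\int_0^\infty g(x)J_0(x\tau)x\,\D x$, which equals $F(\tau)$ by Hankel inversion, plus $i$ times the sought $Y$-transform; on the right, the Sokhotski--Plemelj formula gives $(\widehat{\mathscr K}F)(\tau+i0^+)=\mathscr P(\tau)-iF(\tau)\tau$, where $\mathscr P(\tau):=\frac1\pi\,\mathrm{P.V.}\!\int_{-\infty}^\infty\frac{F(|t|)|t|}{\tau-t}\,\D t$. Matching the two sides and cancelling the common term $F(\tau)$ isolates the key identity
\[\int_0^\infty g(x)Y_0(x\tau)x\,\D x=\frac{1}{\tau}\mathscr P(\tau),\qquad \tau>0.\]
Writing the second factor of the target integral as $\int_0^\infty Y_0(x\tau)F(\tau)\tau\,\D\tau$ and interchanging the order of integration then reduces the entire quantity to $\int_0^\infty F(\tau)\mathscr P(\tau)\,\D\tau$. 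Unfolding $\mathscr P$ by evenness turns this into the symmetric double principal-value integral
\[\frac{2}{\pi}\,\mathrm{P.V.}\!\int_0^\infty\!\!\int_0^\infty\frac{F(r)F(\tau)\,r\tau}{\tau^2-r^2}\,\D r\,\D\tau,\]
whose integrand is a density $F(r)F(\tau)r\tau$ symmetric under $r\leftrightarrow\tau$ multiplied by the kernel $(\tau^2-r^2)^{-1}$ that is antisymmetric under the same swap; the double integral therefore vanishes, which is the asserted cancelation.

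The genuinely delicate steps are the two analytic passages rather than any single identity: the limit interchange in $\tau_c\to\tau+i0^+$ (continuity of $\widehat{\mathscr K}F$ up to the real axis and the validity of Sokhotski--Plemelj for the density $F(|t|)|t|$), and the final Fubini step together with the existence of the double principal-value integral. This is exactly where the hypotheses are consumed: the prescribed decay $(\widehat{\mathscr K}F)(\tau)=O(\sqrt{\tau})$ as $|\tau|\to0$ and $(\widehat{\mathscr K}F)(\tau)=O(1/|\tau|)$ as $|\tau|\to\infty$ make $F(\tau)\mathscr P(\tau)$ absolutely integrable on $(0,\infty)$ and legitimize replacing the iterated integral by the symmetric principal-value integral above, while the same bounds control the regularized $x$-integrals uniformly in $\tau_c$ as the boundary is approached. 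I expect this taming of conditionally convergent (principal-value) integrals, not the formal algebra, to be the main obstacle.
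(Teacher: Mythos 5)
Your proposal is correct, and it takes a genuinely different route from the paper, although the two arguments share the same skeleton: both reduce the mixed $J$/$Y$ pairing to the single statement $\int_0^\infty F(\tau)\mathscr P(\tau)\,\D \tau=0$, where $\mathscr P(\tau)$ denotes the Hilbert transform of $F(|t|)|t|$, and then kill that pairing. The paper performs the reduction on the \emph{other} factor: it closes the contour in $\int_{i0^+-\infty}^{i0^++\infty}H_0^{(1)}(x\tau)(\widehat {\mathscr K}F)(\tau)\D \tau=0$ by Jordan's lemma, applies the Plemelj jump on the real axis, and exploits the even/odd parity of the two boundary terms to get $\int_0^\infty Y_0(xt)F(t)t\,\D t=-\int_0^\infty J_0(x\xi)\mathscr P(\xi)\,\D \xi$, which it then fuses with the Hankel transform of $F$ via Parseval--Plancherel, arriving at the target $=-\int_0^\infty F\mathscr P$. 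You instead evaluate the $Y$-transform of the Hankel transform $g$ directly through the Lommel-type integral at complex parameter (your Green's-identity evaluation $\int_0^\infty J_0(xt)H_0^{(1)}(x\tau_c)x\,\D x=\tfrac{2i}{\pi(\tau_c^2-t^2)}$ is correct and agrees with Watson), identify the fused integral with $\tfrac{i}{\tau_c}(\widehat{\mathscr K}F)(\tau_c)$, and take the boundary limit to get $\int_0^\infty g(x)Y_0(x\tau)x\,\D x=\mathscr P(\tau)/\tau$, whence the target $=+\int_0^\infty F\mathscr P$. The opposite signs are not an error on either side: the two reductions attach the Hilbert kernel to different factors, and juxtaposing them would in fact prove $\int_0^\infty F\mathscr P=0$ outright. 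For the final vanishing, the paper writes the pairing as $\pm\frac14\I\int_{i0^+-\infty}^{i0^++\infty}[(\widehat {\mathscr K}F)(\tau)]^{2}\tau^{-1}\D \tau$ and closes the contour upward a second time --- this is precisely where the hypothesis $(\widehat{\mathscr K}F)(\tau)=O(\sqrt{\tau})$ is consumed, since it keeps the integrand bounded as $\tau\to i0^+$ --- whereas you argue by antisymmetry of the principal-value kernel under $r\leftrightarrow\tau$; the two are equivalent, but the symmetric-truncation interchange you flag as delicate can be obtained for free by borrowing the paper's contour wrapper verbatim. On rigor the ledger roughly balances: your route needs uniform (Abel-type) control of the conditionally convergent $x$-integral as $\tau_c\to\tau+i0^+$ together with Hankel inversion $\int_0^\infty g(x)J_0(x\tau)x\,\D x=F(\tau)$, while the paper needs the analogous boundary-limit justification for its contour identity plus the Parseval theorem. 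What your version buys is concreteness: Parseval--Plancherel and the parity bookkeeping are replaced by one explicit Weber--Schafheitlin-type evaluation and a plain interchange of integrals, making the mechanism of the cancelation visible; what the paper's version buys is that both stated asymptotic hypotheses on $\widehat{\mathscr K}F$ are used exactly where quoted, with no auxiliary inversion theorem.
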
\begin{proof}According to the asymptotic behavior of $ \widehat {\mathscr K}F$, we have a vanishing identity for all $ x>0$:\begin{align}
\int_{i0^+-\infty}^{i0^++\infty}H_0^{(1)}(x\tau)(\widehat {\mathscr K}F)(\tau)\D \tau=0.\label{eq:H0KF_Jordan}
\end{align} Here, the contour can be closed upwards, thanks to Jordan's lemma. As $ \I \tau\to0^+$, we have the following Plemelj jump  relation for $ \xi\in(-\infty,0)\cup(0,\infty)$:\begin{align}
(\widehat {\mathscr K}F)(\xi+i0^{+})=\mathscr P\int_{-\infty}^\infty\frac{F(|t|)|t|\D t}{\pi(\xi-t)}-iF(|\xi|)|\xi|,
\end{align}where $ \mathscr P$ denotes Cauchy principal value. Here, the first term on the right-hand side of the equation above is the Hilbert transform of an even function $ F(|t|)|t|,t\in(-\infty,0)\cup(0,\infty)$, so it must be an odd function in $\xi$ \cite[][\S4.2]{KingVol1}. Meanwhile, we know that \begin{align}
H_0^{(1)}(x\xi+i0^{+})=\begin{cases}J_{0}(x\xi)+iY_0(x\xi),\ & \xi>0,\ \\
-J_{0}(x|\xi|)+iY_0(x|\xi|), & \xi<0,\ \\
\end{cases}
\end{align}so the vanishing identity in \eqref{eq:H0KF_Jordan} brings us \begin{align}
\int_0^\infty Y_0(xt)F(t)t\D t=-\int_0^\infty J_0(x\xi)\left[\mathscr  P\int_{-\infty}^\infty\frac{F(|t|)|t|\D t}{\pi(\xi-t)} \right]\D \xi.\label{eq:Hilbert_JY_pair}
\end{align}

Now we compute \begin{align}&
\int_0^\infty \left[ \int_0^\infty J_0(xt) F(t)t\D t \right]\left[\int_0^\infty Y_0(x\tau) F(\tau)\tau\D \tau\right]x\D x\notag\\={}&-\int_0^\infty \left[ \int_0^\infty J_0(xt) F(t)t\D t \right]\left\{\int_0^\infty J_0(x\tau) \left[\mathscr  P\int_{-\infty}^\infty\frac{F(|t|)|t|\D t}{\pi(\tau-t)} \right]\D \tau\right\}x\D x\notag\\={}&-\int_0^\infty F(\tau)\left[\mathscr  P\int_{-\infty}^\infty\frac{F(|t|)|t|\D t}{\pi(\tau-t)} \right]\D \tau=\frac{1}{4}\I\int_{i0^+-\infty}^{i0^++\infty}\frac{[(\widehat {\mathscr K}F)(\tau)]^{2}\D \tau}{\tau}.
\end{align}The last contour integral is indeed vanishing, because the integrand remains bounded as $ \tau\to i0^+$, and we can close the contour upwards, according to the asymptotic behavior of the Kramers--Kronig transform $ \widehat {\mathscr K}F$.\end{proof}\begin{theorem}[Sunrise at 4 loops]We have\begin{align}
\frac{3}{\pi^{2}}\int_0^\infty I_0(t)[K_0(t)]^5 t\D t=\int_0^\infty [I_0(t)]^{3}[K_0(t)]^3 t\D t={}&-6\pi^2\int_{0}^{i\infty}f_{4,6}(z)z\D z=\frac{3}{2}L(f_{4,6},2),\label{eq:sunrise_4loop}
\end{align}as stated in \eqref{eq:IKM151_331_eta_int}.\end{theorem}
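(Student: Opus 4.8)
The three equalities in \eqref{eq:sunrise_4loop} separate into two routine reductions flanking one genuine modular computation, and I would organize the argument accordingly.

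\emph{The outer equalities.} For the leftmost identity $\frac{3}{\pi^{2}}\IKM(1,5;1)=\IKM(3,3;1)$ I would invoke the sum rule \eqref{eq:HB_sum_rule} at $m=3$, $n=1$ (legitimate since $\tfrac{m-n}{2}=1\in\mathbb Z_{>0}$). A binomial expansion gives
\begin{align*}
\frac{[\pi I_{0}(t)+iK_0(t)]^{3}+[\pi I_{0}(t)-iK_0(t)]^{3}}{i}=-2i\bigl\{\pi^{3}[I_0(t)]^{3}-3\pi I_0(t)[K_0(t)]^{2}\bigr\},
\end{align*}
so multiplying by $[K_0(t)]^{3}t$ and integrating turns \eqref{eq:HB_sum_rule} into $\pi^{3}\IKM(3,3;1)-3\pi\IKM(1,5;1)=0$, which is the claim. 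For the rightmost identity I would use only the Hecke integral representation of the completed $L$-function of the weight-$4$ newform $f_{4,6}$: parametrizing $z=iy$ and using the exponential decay of $f_{4,6}(iy)$ at both cusps, $\int_{0}^{i\infty}f_{4,6}(z)z\D z=-\int_{0}^{\infty}f_{4,6}(iy)y\D y=-\Gamma(2)(2\pi)^{-2}L(f_{4,6},2)=-\tfrac{1}{4\pi^{2}}L(f_{4,6},2)$, whence $-6\pi^{2}\int_{0}^{i\infty}f_{4,6}(z)z\D z=\tfrac32 L(f_{4,6},2)$.

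\emph{The middle equality.} Everything thus reduces to $\IKM(1,5;1)=-2\pi^{4}\int_{0}^{i\infty}f_{4,6}(z)z\D z$, which I would prove in parallel with Propositions \ref{prop:IKM241} and \ref{prop:JYM601}, the only new feature being a nontrivial polynomial weight in the modular variable (this is what moves the Hecke integral from the $L(\cdot,3)$ regime of \eqref{eq:IKM241_eta_int} to the $L(\cdot,2)$ regime needed here). Starting from the Wick rotation \eqref{eq:IKKKKK_JJJJJY}, $\IKM(1,5;1)=-\tfrac{\pi^{5}}{12}\int_0^\infty[J_0(x)]^5Y_0(x)x\D x$, I would apply the Parseval--Plancherel theorem for Hankel transforms to the splitting $[J_0]^5Y_0=[J_0]^3\cdot[J_0]^2Y_0$, writing the integral as $\int_0^\infty \widetilde h(x)\widetilde Y(x)x\D x$ with $\widetilde h=p_3/x$ the Hankel transform of $[J_0]^3$ and $\widetilde Y$ that of $[J_0]^2Y_0$. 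The relation \eqref{eq:JJJY_YJJJ} of Proposition \ref{prop:Y_K_transforms} then expresses $\widetilde Y$ as a modular form (weighted by $w$) minus one third of the $Y$-transform of $[J_0]^3$; the cross term pairing $\widetilde h$ against that $Y$-transform vanishes identically by the Hilbert cancelation of Lemma \ref{lm:HT_JY} (taken with $F=[J_0]^3$). What survives is a single modular integral in which the integrand $[\eta(3w)[\eta(2w)]^6/([\eta(w)]^3[\eta(6w)]^2)]^{2}x\D x$ is weighted by a polynomial in $w$ of degree at most two, the two pieces of the parametrization \eqref{eq:p3_small}--\eqref{eq:p3_medium} contributing the weights $w$ and $w(1-3w)$ respectively.

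\emph{From modular integral to Eichler integral.} To finish I would invoke the Jacobian \eqref{eq:uw_Jac}: with $u=x^{2}$ one has $[\eta(3w)[\eta(2w)]^6/([\eta(w)]^3[\eta(6w)]^2)]^{2}\D u=-18\pi i\,f_{4,6}(w)\D w$, so the weight-$4$ form $f_{4,6}$ materializes and the surviving integral becomes $\int f_{4,6}(w)(\text{polynomial in }w)\D w$ taken along the imaginary axis together with the arc $w=(1+e^{i\varphi})/6$, exactly the two contour pieces handled in Proposition \ref{prop:JYM601}. The last step is to transport the arc contribution to the line $\Re w=\tfrac12$ and thence to the imaginary axis by the Atkin--Lehner substitution $w\mapsto\widehat W_2w$ and the Fricke relation $\eta(-1/\tau)=\sqrt{\tau/i}\,\eta(\tau)$, after which the pieces reassemble into $\int_{0}^{i\infty}f_{4,6}(z)z\D z$ with the asserted constant. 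I expect this contour-and-automorphy bookkeeping to be the main obstacle: one must track how the weight-$4$ automorphy factor acts on the degree-two weight carried by the arc (as in the $\IKM(2,4;1)$ computation, where $w\mapsto-1/(6z)$ exchanges the weights $w^{2}$ and $1$), and confirm that the Fricke sign makes the two contour pieces combine into a single clean Eichler integral of degree one. The supporting analytic points --- convergence of the Hankel and $Y$-transforms, legitimacy of the Parseval interchange, and verification that $F=[J_0]^3$ meets the hypotheses of Lemma \ref{lm:HT_JY} --- are routine but should be checked.
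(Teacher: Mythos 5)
Your outer equalities are correct and coincide with the paper's treatment: the leftmost one is exactly the case $m=3$, $n=1$ of \eqref{eq:HB_sum_rule}, and the rightmost one is the Hecke--Mellin computation (one caveat: $\sum_n a_{4,6}(n)n^{-2}$ is not absolutely convergent for a weight-4 form, so you should either appeal to the analytic continuation implicit in the completed integral or, as the paper does, split the integral at $i/\sqrt{6}$ via the Fricke involution to land on the exponentially convergent series that actually defines $L(f_{4,6},2)$ here). The genuine gap is in the final ``contour-and-automorphy bookkeeping'' that you deferred: it cannot be made to work from your single fusion. After you split $[J_0]^5Y_0=[J_0]^3\cdot[J_0]^2Y_0$, apply \eqref{eq:JJJY_YJJJ}, drop the cross term, and transport the arc by $w=\widehat W_2z$, what survives alongside $\frac{8}{\pi}\int_0^{i\infty}f_{4,6}(w)w\D w$ is the half-line Eichler integral $\frac{4}{\pi}\int_{1/2}^{1/2+i\infty}f_{4,6}(z)(1-2z)\D z$, and this piece is \emph{not} reducible to the imaginary axis by any automorphy. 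The line $\R z=\frac12$ is the geodesic joining the cusps $\frac12$ and $\infty$, and no element of the Atkin--Lehner extended group carries it to the geodesic joining $0$ and $\infty$: $\widehat W_2$ and the Fricke map send it back to the arc, while $\widehat W_3$ maps it to itself --- indeed, under $z=\widehat W_3u$ the differential $f_{4,6}(z)(1-2z)\D z$ pulls back exactly to $f_{4,6}(u)(1-2u)\D u$, so the $\widehat W_3$ symmetry returns a tautology rather than a reduction. Combined with the Wick rotation \eqref{eq:IKKKKK_JJJJJY}, your fusion therefore produces one linear equation in two unknowns, namely $\IKM(1,5;1)$ and $\int_{1/2}^{1/2+i\infty}f_{4,6}(z)(1-2z)\D z$, and the argument cannot close.

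The paper supplies precisely the missing second equation by an \emph{independent} fusion on the modified-Bessel side: pairing \eqref{eq:IKK_Hankel_mod} with \eqref{eq:JKKK_YIKK} through Parseval--Plancherel, and killing the cross term by Lemma \ref{lm:HT_JY} with $F=I_0K_0^2$ (whose exponential decay makes the hypotheses unproblematic), it obtains $\IKM(1,5;1)=\frac{\pi^4}{2}\int_{-1/2}^{-1/2+i\infty}f_{4,6}(w)(1+2w)\D w$; by $1$-periodicity of $f_{4,6}$ this evaluates your stray half-line integral as $-\frac{2}{\pi^4}\IKM(1,5;1)$, after which the half-line pieces cancel and one is left with $\frac{8}{\pi}\int_0^{i\infty}f_{4,6}(w)w\D w=-\frac{4}{\pi^5}\IKM(1,5;1)$, i.e.\ the middle equality. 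So you need to add this second fusion (or some equivalent independent evaluation of the $\R z=\frac12$ integral); the rest of your outline then goes through. A secondary caveat: your Hilbert cancelation takes $F=[J_0]^3$, for which the Kramers--Kronig transform decays only like $|\tau|^{-1/2}$ near the real axis, so Lemma \ref{lm:HT_JY} applies only after weakening its $O(1/|\tau|)$ hypothesis (the weaker decay still suffices for Jordan's lemma); this point is implicitly shared by the paper's own $\JYM$-side fusion, but in a careful write-up it deserves an explicit remark rather than the label ``routine.''
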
\begin{proof}The first equality in \eqref{eq:sunrise_4loop}
has been proved in \cite[][Lemma 3.1]{HB1}, as a special case ($ m=3,n=1$) of \eqref{eq:HB_sum_rule}. The last equality comes from the definition of $L$-functions via  Mellin transforms of cusp forms. The rest of this proof will revolve around the second equality.

We combine \eqref{eq:IKK_Hankel_mod} with \eqref{eq:JKKK_YIKK}, and carry out computations as in Proposition \ref{prop:IKM241}, to arrive at \begin{align}
\int_0^\infty I_0(t)[K_0(t)]^5 t\D t=\frac{\pi^{4}}{2}\int_{-\frac{1}{2}}^{-\frac{1}{2}+i\infty}f_{4,6}(w)(1+2w)\D w.
\end{align}Here, we have used the Parseval--Plancherel identity\begin{align}
\int_0^\infty \left\{ \int_0^\infty J_0(xt) I_{0}(t)[K_{0}(t)]^{2}t\D t \right\}\left\{\int_0^\infty J_0(x\tau)[K_{0}(\tau)]^{3}\tau\D \tau\right\}x\D x
=\int_0^\infty I_0(t)[K_0(t)]^5 t\D t
\end{align} and the Hilbert cancelation \begin{align}
\int_0^\infty \left\{ \int_0^\infty J_0(xt) I_{0}(t)[K_{0}(t)]^{2}t\D t \right\}\left\{\int_0^\infty Y_0(x\tau)  I_{0}(\tau)[K_{0}(\tau)]^{2}\tau\D \tau\right\}x\D x
=0.
\end{align}

By an analog of Proposition \ref{prop:JYM601}, we fuse \eqref{eq:p3_small}--\eqref{eq:p3_medium} and \eqref{eq:JJJY_YJJJ} together into the following formula:\begin{align}
\int_0^\infty [J_0(t)]^5 Y_{0}(t)t\D t={}&\frac{8}{\pi}
\int_{0}^{i\infty}f_{4,6}(w)w\D w+\frac{8}{\pi }\int^{0+i0^{+}}_{\frac{1}{3}+i0^{+}}f_{4,6}(w)w(1-3w)\D w.
\end{align} Again, a variable substitution $ w=\widehat W_2z$ gives rise to\begin{align}
\frac{8}{\pi }\int^{0+i0^{+}}_{\frac{1}{3}+i0^{+}}f_{4,6}(w)w(1-3w)\D w=\frac{4}{\pi}\int_{\frac{1}{2}}^{\frac{1}{2}+i\infty}f_{4,6}(z)(1-2z)\D z.
\end{align}

Thus, we have \begin{align}
\frac{8}{\pi}
\int_{0}^{i\infty}f_{4,6}(w)w\D w=\int_0^\infty [J_0(t)]^5 Y_{0}(t)t\D t+\frac{8}{\pi^{5}}\int_0^\infty I_0(t)[K_0(t)]^5 t\D t
\end{align} by cancelation of Eichler integrals. We can rewrite the equation above as\begin{align}
\frac{8}{\pi}
\int_{0}^{i\infty}f_{4,6}(w)w\D w=-\frac{4}{\pi^{5}}\int_0^\infty I_0(t)[K_0(t)]^5 t\D t,
\end{align}with the aid of \eqref{eq:IKKKKK_JJJJJY}. As we have  \cite[cf.][(107)]{Broadhurst2016}\begin{align}
&-6\pi^2\int_{0}^{i\infty}f_{4,6}(z)z\D z=-12\pi^2\int_{i/\sqrt{6}}^{i\infty}f_{4,6}(z)z\D z\notag\\={}&\frac{3}{2}\sum_{n=1}^\infty\frac{a_{4,6}(n)}{n^2}\left( 2+\frac{4\pi n}{\sqrt{6}} \right)e^{-2\pi n/\sqrt{6}}
\end{align}by termwise integration, this completes the proof. \end{proof}Like the determinant of \eqref{eq:BM_m2}, Broadhurst--Mellit also proposed that \cite[(113)]{Broadhurst2016}\begin{align}
\det
\begin{pmatrix}\IKM(1,5;1) & \IKM(1,5;3) \\
\IKM(2,4;1) & \IKM(2,4;3) \\
\end{pmatrix}=\frac{\pi^{4}}{2^63^2}.
\end{align}We have recently verified this conjecture in \cite[][\S3]{Zhou2017BMdet}, without explicitly computing individual matrix elements.

The Eichler integral representations for the first column in the determinant above have already been discussed. In a recent talk at the Erwin Schr\"odinger Institute \cite[][\S7.3]{Broadhurst2017ESI}, Broadhurst has announced his discoveries of representations for the second column as integrals over modular forms. We now prove Broadhurst's  empirical formulae.
\begin{theorem}[Broadhurst integrals for $ \IKM(1,5;3)$ and $ \IKM(2,4;3)$]Setting $v=3\left[ \frac{\eta(3z)}{\eta(z)} \right]^4\left[ \frac{\eta(2z)}{\eta(6z)} \right]^2 $ and $G(z)=f_{4,6}(z)(v^4-6v^2+2-6v^{-2}+9v^{-4})$, we have \begin{align}
\IKM(2,4;3)={}&\frac{\pi^3}{i}\int_{\frac12}^{\frac{1}{2}+i\infty}\frac{G(z)}{96}\D z,\label{eq:IKM243_mod}\\\IKM(1,5;3)={}&-3\pi^{4}\int_{\frac12}^{\frac{1}{2}+i\infty}\frac{G(z)}{96}\left( z-\frac{1}{2} \right)\D z.\label{eq:IKM153_mod}
\end{align}\end{theorem}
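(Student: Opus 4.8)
The plan is to carry the Parseval--Plancherel strategy of Proposition~\ref{prop:IKM241} and the sunrise theorem from the $t^{1}$ level to the $t^{3}$ level. The one extra ingredient is the case $k=1$ of \eqref{eq:Bessel_diff_trick}: since $\big(\frac{\D^2}{\D x^2}+\frac1x\frac{\D}{\D x}\big)J_0(xt)=-t^2J_0(xt)$, multiplication by $t^2$ on the Bessel side corresponds to $-\Delta_x$ on the Hankel side, where $\Delta_x:=\frac{\D^2}{\D x^2}+\frac1x\frac{\D}{\D x}=4\frac{\D}{\D u}\big(u\frac{\D}{\D u}\big)$ with $u=x^2$. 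Writing $h(x)=\int_0^\infty J_0(xt)I_0(t)[K_0(t)]^2t\D t$ [the Hankel transform parametrized in \eqref{eq:IKK_Hankel_mod}] and $g(x)=\int_0^\infty J_0(xt)[K_0(t)]^3t\D t$, the Parseval--Plancherel identity then gives
\begin{align*}
\IKM(2,4;3)=\int_0^\infty[-\Delta_xh(x)]h(x)\,x\D x,\qquad \IKM(1,5;3)=\int_0^\infty[-\Delta_xh(x)]g(x)\,x\D x.
\end{align*}
Throughout I set $\Phi(w):=\frac{\eta(3w)[\eta(2w)]^6}{[\eta(w)]^3[\eta(6w)]^2}$, so that $h=\frac{\pi}{3\sqrt3}\Phi$ along the relevant contour, and I record the decisive collapse
\begin{align*}
[\Phi(w)]^2\frac{\D u}{\D w}=-18\pi i\,f_{4,6}(w),
\end{align*}
which is immediate from the Jacobian \eqref{eq:uw_Jac}.

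For \eqref{eq:IKM243_mod} I would use that $H(u):=h(\sqrt u)$ is annihilated by $\widehat B_3$ of \eqref{eq:defn_B3}, which is in Sturm--Liouville form $\widehat B_3=\frac{\D}{\D u}\big(u(u+1)(u+9)\frac{\D}{\D u}\big)+(u+3)$. Eliminating $H''$ through $\widehat B_3H=0$ turns $-\Delta_xh=-4\frac{\D}{\D u}(uH')$ into the first-order expression $\frac{8u(u+5)}{(u+1)(u+9)}H'+\frac{4(u+3)}{(u+1)(u+9)}H$; feeding this into $\IKM(2,4;3)=\frac12\int_0^\infty(-\Delta_xh)H\D u$ and integrating the $HH'=\frac12(H^2)'$ term by parts (the boundary contributions vanish because $h(x)\to\frac{\pi}{3\sqrt3}$ as $x\to0^+$ and $h(x)\to0$ as $x\to\infty$) collapses everything to
\begin{align*}
\IKM(2,4;3)=\int_0^\infty\frac{2(u^3+8u^2+21u-18)}{[(u+1)(u+9)]^2}[H(u)]^2\D u.
\end{align*}
Now $[H(u)]^2\D u=\frac{\pi^2}{27}[\Phi(w)]^2\frac{\D u}{\D w}\D w=-\frac{2\pi^3i}{3}f_{4,6}(w)\D w$, so the moment becomes a constant multiple of $\int_{-1/2}^{-1/2+i\infty}\frac{u^3+8u^2+21u-18}{[(u+1)(u+9)]^2}f_{4,6}(w)\D w$. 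Finally the Fricke involution $w=\widehat W_2z$ of Proposition~\ref{prop:JYM601}, under which $f_{4,6}(w)\D w=2(3z-1)^2f_{4,6}(z)\D z$ and the line $\R w=-\tfrac12$ maps to $\R z=\tfrac12$, yields \eqref{eq:IKM243_mod} once the rational weight is matched to $\frac{1}{96}P(v)$, where $P(v):=v^4-6v^2+2-6v^{-2}+9v^{-4}$.

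The proof of \eqref{eq:IKM153_mod} follows the same template with $g$ replacing the second $h$. First I would rewrite $g$ modularly: combining \eqref{eq:IKK_Hankel_mod} with \eqref{eq:JKKK_YIKK} expresses $g$ as $\frac{3\pi}{2}$ times the $Y$-transform $\int_0^\infty Y_0(xt)I_0(t)[K_0(t)]^2t\D t$ plus the modular piece $\frac{\pi^2(2w+1)}{2\sqrt3\,i}\Phi(w)$. The $Y$-transform contribution is killed by a weighted analogue of Lemma~\ref{lm:HT_JY}: with $F=I_0[K_0]^2$, the integral $\int_0^\infty[-\Delta_xh]\big[\int_0^\infty Y_0(x\tau)F(\tau)\tau\D\tau\big]x\D x$ reduces, exactly as there, to a contour integral of $[(\widehat{\mathscr K}F)(\tau)]^2$ against an odd power of $\tau$ that still closes in the upper half-plane and vanishes. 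What survives carries the linear factor $(2w+1)$; running the same Sturm--Liouville reduction of $\widehat B_3$ (now exploiting the Lagrange/Wronskian identity afforded by the self-adjoint form together with $\widehat B_3g=\frac32$), integrating by parts, and invoking the collapse $[\Phi]^2\frac{\D u}{\D w}=-18\pi i f_{4,6}$ produces $\int_{-1/2}^{-1/2+i\infty}(2w+1)\,\frac{u^3+8u^2+21u-18}{[(u+1)(u+9)]^2}f_{4,6}(w)\D w$. The involution $w=\widehat W_2z$ turns $(2w+1)$ into a multiple of $\big(z-\tfrac12\big)$, giving \eqref{eq:IKM153_mod}.

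The principal obstacle is the eta/theta bookkeeping hidden in the final identification: one must verify that the rational weight $\frac{u^3+8u^2+21u-18}{[(u+1)(u+9)]^2}$, pulled back along $w=\widehat W_2z$ and multiplied by the automorphy factor $(3z-1)^2$, is (up to an explicit rational constant) the modular function $\frac{1}{96}P(v)$ with $v=3[\eta(3z)/\eta(z)]^4[\eta(2z)/\eta(6z)]^2$. Because every quantity in sight is a modular function on the genus-zero curve of $\varGamma_0(6)_{+2}$ with poles only at the cusps $u\in\{-1,-9,0,\infty\}$, this reduces to matching principal parts and one normalizing $q$-coefficient, with $u$ (equivalently $v$) as the Hauptmodul. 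Two subsidiary checks need care but no new ideas: the vanishing of all boundary terms in the integrations by parts, controlled by the hypergeometric representation \eqref{eq:JIKK_2F1} for the behaviour of $h$ and $g$ near $x\to0^+$ and $x\to\infty$; and the precise weighted Hilbert cancelation for the $Y$-transform term, where one confirms that the extra $t^2$ keeps the Kramers--Kronig integrand decaying fast enough to close the contour. The mixed case \eqref{eq:IKM153_mod} is the more delicate of the two, since the product $hg$ does not reduce to a perfect square and the surviving derivative terms must be tamed through the self-adjointness of $\widehat B_3$ before the collapse to $f_{4,6}$ can be applied.
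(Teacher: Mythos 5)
Your overall strategy is the paper's own---Parseval--Plancherel fusion at the $t^3$ level, elimination of the second derivative through $\widehat B_3$, and the collapse $[\Phi(w)]^2\,\frac{\D u}{\D w}=-18\pi i\,f_{4,6}(w)$ from \eqref{eq:uw_Jac}; indeed your partial-fraction weight $\frac{u^3+8u^2+21u-18}{[(u+1)(u+9)]^2}$ is exactly the paper's $2\big[\tfrac{1}{4(u+1)}-\tfrac{1}{2(u+1)^2}+\tfrac{3}{4(u+9)}-\tfrac{9}{2(u+9)^2}\big]$. But your endgame contains a genuine error: the Atkin--Lehner map $w=\widehat W_2z$ does \emph{not} send the line $\R w=-\tfrac12$ to $\R z=\tfrac12$. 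For $z=\tfrac12+iy$ one computes $\R(\widehat W_2z)=\frac{12y^2}{1+36y^2}\in\big(0,\tfrac13\big)$, so $\widehat W_2$ exchanges the line $\R z=\tfrac12$ with the semicircle $w=(1+e^{i\varphi})/6$ used in Proposition \ref{prop:JYM601}, never with another vertical line. The correct move, as in the paper, is the trivial translation $w\mapsto w+1$ (the integrand in \eqref{eq:IKM243_mod_prep} is $1$-periodic): it carries \emph{no} automorphy factor, and it is what turns $2w+1$ into $2\big(z-\tfrac12\big)$, producing the weight in \eqref{eq:IKM153_mod}. Under your Fricke substitution $2w+1$ becomes $\frac{10z-4}{6z-2}$, not a multiple of $z-\tfrac12$, and the spurious factor $2(3z-1)^2$ would make the proposed $q$-expansion match with $G(z)/96$ fail. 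With the translation substituted for the involution, your derivation of \eqref{eq:IKM243_mod} is sound and essentially identical to the paper's.

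The fatal gap is in \eqref{eq:IKM153_mod}: the single-term ``weighted Hilbert cancelation'' is false, precisely at the point you flagged for checking. Running the proof of Lemma \ref{lm:HT_JY} on $\int_0^\infty[-\Delta_xh(x)]\big[\int_0^\infty Y_0(x\tau)F(\tau)\tau\D\tau\big]x\D x$ with $F=I_0[K_0]^2$ reduces it, via \eqref{eq:Hilbert_JY_pair} and Parseval, to a multiple of
\begin{equation*}
\I\int_{i0^+-\infty}^{i0^++\infty}\tau\,[(\widehat {\mathscr K}F)(\tau)]^{2}\D \tau,
\end{equation*}
and the extra factor $\tau$ destroys the argument: since $(\widehat{\mathscr K}F)(\tau)=\frac{c}{\tau}+O(\tau^{-3})$ with $c=\frac{2}{\pi}\int_0^\infty I_0(t)[K_0(t)]^2t\D t=\frac{2}{3\sqrt3}\neq0$ (the $\tau^{-2}$ coefficient vanishes by parity), the integrand behaves like $c^2/\tau$, the large semicircle contributes $i\pi c^2$, and the line integral equals $-i\pi c^2+o(1)$. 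The cross term is therefore $\pm\pi c^2/4=\pm\pi/27$, not zero. Equivalently, $\Delta_x$ is not self-adjoint between a $J$- and a $Y$-transform: the logarithm of $Y_0$ at $x=0$ leaves a nonvanishing boundary term $[x(a'b-ab')]_{x=0}$. This is exactly why the paper does not kill the $Y$-piece alone, but instead proves the \emph{symmetric} vanishing identity in which the weight $\tau^3$ is distributed over the two slots---so the Kramers--Kronig integrand becomes $\widehat{\mathscr K}(Ft)\cdot\widehat{\mathscr K}(Ft^3)/\tau=O(\tau^{-3})$ and genuinely closes---and pairs the weighted Hankel transform against the full $\widehat B_3$-annihilated combination $g(u)$ built from \eqref{eq:JKKK_YIKK}. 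As written, your argument for \eqref{eq:IKM153_mod} would come out wrong by this computable nonzero amount; the symmetric two-term identity is the missing idea.
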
\begin{proof}Writing $ f(u):=\int_0^\infty J_0(\sqrt{u}t)I_0(t)[K_0(t)]^2t\D t$ for $ u>0$, and using the Bessel differential equation along with  $\widehat B_3f(u)=0 $  [cf.~\eqref{eq:defn_B3}], one can show that \begin{align}\begin{split}&
\int_0^\infty J_0(\sqrt{u}t)I_0(t)[K_0(t)]^2t^{3}\D t\\={}&-4\left( u\frac{\D^2}{\D u^2}+\frac{\D }{\D u} \right)\int_0^\infty J_0(\sqrt{u}t)I_0(t)[K_0(t)]^2t\D t\\={}&\frac{4 [2 u (u+5) f'(u)+(u+3) f(u)]}{(u+1) (u+9)}.\end{split}
\end{align}  By Hankel fusion and integration by parts, we have \begin{align}\begin{split}&
\int_0^\infty [I_0(t)]^{2}[K_0(t)]^4t^{3}\D t\\={}&\frac{1}{2}\int_0^\infty\frac{4f(u) [2 u (u+5) f'(u)+(u+3) f(u)]}{(u+1) (u+9)} \D u\\={}&2\int_0^\infty \left[ \frac{1}{4 (u+1)}-\frac{1}{2 (u+1)^2}+\frac{3}{4 (u+9)}-\frac{9}{2 (u+9)^2} \right] [f(u)]^2\D u.\end{split}
\end{align}As we may recall from Proposition \ref{prop:IKM241}, the differential form $\frac{[f(u)]^2\D u}{2} $ translates into $ \frac{\pi^3i}{3}[\eta(z)\eta(2z)$ $\eta(3z)\eta(6z)]^{2}\D z$ for $\R z=-\frac12$, and \begin{align}
u+1=\left[\frac{\eta(6z)}{\eta(3z)}\right]^{3}\left[ \frac{\eta(z)}{\eta(2z)} \right]^{9},\quad u+9=9\frac{ \eta (z)}{\eta (6z)}\left[\frac{ \eta (3z)}{ \eta (2z)}\right]^5,
\end{align}so  $ \IKM(2,4;3)$ has  an  integral representation:\begin{align}
\begin{split}&\frac{\pi^{3}}{i}\int_{-\frac{1}{2}}^{-\frac{1}{2}+i\infty}
\left\{ \frac{2[ \eta (3 z)]^8 [\eta (2 z)]^{20}}{3 [\eta (z)]^{16} [\eta (6 z)]^4}+\frac{2[ \eta (6 z)]^4 [\eta (2 z)]^{12}}{27[ \eta (3 z)]^8}\right.\\{}&\left.-\frac{[\eta (3 z)]^5 [\eta (2 z)]^{11}}{3[ \eta (z)]^7 \eta (6 z)}-\frac{\eta (z)[ \eta (6 z)]^3 [\eta (2 z)]^7}{9[ \eta (3 z)]^3} \right\}\D z.\label{eq:IKM243_mod_prep}\end{split}\end{align}
Here, the path of integration can be shifted to $\R z=\frac12 $, by periodicity of the integrand. To identify the integrand inside the braces of \eqref{eq:IKM243_mod_prep} with $ G(z)/96$ in \eqref{eq:IKM243_mod}, simply compare their $q$-expansions up to sufficiently many terms  \cite[][Remark 1]{ChanZudilin2010}. This proves Broadhurst's integral representation for $\IKM(2,4;3)$ in  \eqref{eq:IKM243_mod}.

To verify \eqref{eq:IKM153_mod}, we start by rewriting \eqref{eq:JKKK_YIKK} as\begin{align}\begin{split}
g(u):={}&\int_0^\infty J_0(\sqrt{u}t)[K_0(t)]^3t\D t-\frac{3\pi}{2}\int_0^\infty Y_0(\sqrt{u}t)I_0(t)[K_0(t)]^2t\D t\\={}&\frac{\pi^{2}}{2}\frac{1}{3+u} \, _2F_1\left(\left.\begin{array}{c}
\frac{1}{3},\frac{2}{3} \\1 \\
\end{array}\right|1-\frac{u^{2} (9+u)}{(3+u)^3}\right
)\end{split}\end{align} and noting that $ \widehat B_3g(u)=0$. We can subsequently deduce Broadhurst's integral representation for $ \IKM(1,5;3)$ from Hankel fusion and a vanishing identity for $ F(t)=I_0(t)[K_0(t)]^2$:\begin{align}
\begin{split}&
\int_0^\infty \left[ \int_0^\infty J_0(xt) F(t)t\D t \right]\left[\int_0^\infty Y_0(x\tau) F(\tau)\tau^{3}\D \tau\right]x\D x
\\{}&+\int_0^\infty \left[ \int_0^\infty J_0(xt) F(t)t^{3}\D t \right]\left[\int_0^\infty Y_0(x\tau) F(\tau)\tau\D \tau\right]x\D x=0,\end{split}
\end{align} which is provable by  a modest variation on Lemma \ref{lm:HT_JY}.    \end{proof}\section{Feynman diagrams with 8 Bessel factors\label{sec:8Bessel}}
\subsection{Hankel transforms and Wick rotations\label{subsec:8Bessel_Hankel_Wick}}
We open this section by a confirmation  of Broadhurst's conjecture on  $ \IKM(2,6;1)$. \begin{theorem}[Eichler integral formulation  of $ \IKM(2,6;1)$]\label{thm:IKM_261}We have \begin{align}&
\int_0^\infty[I_0(t)]^2[K_0(t)]^6t\D t=\frac{\pi^{5}}{4i}\int_{0}^{i\infty}\left\{ \frac{ [\eta (2 z) \eta (3 z)]^{9}}{[\eta (z)\eta (6 z)]^{3}}+\frac{ [\eta ( z) \eta (6 z)]^{9}}{[\eta (2z)\eta (3 z)]^{3}} \right\}\D z.\label{eq:IKM261_Eichler}
\end{align} \end{theorem}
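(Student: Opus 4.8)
The plan is to treat $\IKM(2,6;1)$ as a \emph{self-fusion} of the four-factor Hankel transform $\int_0^\infty J_0(xt)I_0(t)[K_0(t)]^3t\D t$, in exact analogy with the way Proposition~\ref{prop:IKM241} handled $\IKM(2,4;1)$ through a three-factor transform. Since $[I_0(t)]^2[K_0(t)]^6=(I_0(t)[K_0(t)]^3)^2$ and the function $I_0[K_0]^3$ lies in $L^2((0,\infty),t\D t)$---it decays like $e^{-2t}/t^2$ at infinity and is only logarithmically singular at the origin---the Parseval--Plancherel theorem for Hankel transforms yields
\[
\int_0^\infty [I_0(t)]^2[K_0(t)]^6t\D t=\int_0^\infty\left[\int_0^\infty J_0(xt)I_0(t)[K_0(t)]^3t\D t\right]^2x\D x.
\]

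Next I would insert the modular parametrization \eqref{eq:IKKK_Hankel_eta}, that is $\int_0^\infty J_0(xt)I_0(t)[K_0(t)]^3t\D t=\frac{\pi^2}{16}Z_{6,3}(z)$ with $x=\left[\frac{2\eta(2z)\eta(6z)}{\eta(z)\eta(3z)}\right]^3$, where $z$ sweeps the positive imaginary axis and $x$ sweeps $(0,\infty)$ bijectively. The change of variables rests on two facts from \S3: the square $x^2=64\,X_{6,3}(z)$, and the logarithmic derivative $\frac1x\frac{\D x}{\D z}=\pi i\left\{\frac{[\eta(z)\eta(2z)]^3}{\eta(3z)\eta(6z)}+9\frac{[\eta(3z)\eta(6z)]^3}{\eta(z)\eta(2z)}\right\}$ from Lemma~\ref{lm:Jac_xz}; the latter holds verbatim here because \eqref{eq:xz_Jac} is an identity of functions on $\mathfrak H$ that is blind to the constant prefactor in the parametrization. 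Writing $x\D x=x^2\cdot\frac1x\frac{\D x}{\D z}\D z$ and collecting the constants $\left(\frac{\pi^2}{16}\right)^2\cdot64\cdot\pi i=\frac{\pi^5i}{4}$ recasts the integral as $\frac{\pi^5i}{4}\int[Z_{6,3}(z)]^2X_{6,3}(z)\left\{\cdots\right\}\D z$. A short eta-quotient cancellation then gives the collapse $[Z_{6,3}(z)]^2X_{6,3}(z)=[\eta(z)\eta(2z)\eta(3z)\eta(6z)]^2=f_{4,6}(z)$, so the integrand simplifies to $f_{4,6}(z)\left\{\frac{[\eta(z)\eta(2z)]^3}{\eta(3z)\eta(6z)}+9\frac{[\eta(3z)\eta(6z)]^3}{\eta(z)\eta(2z)}\right\}$.

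The crux is the weight-$6$ eta identity
\[
f_{4,6}(z)\left\{\frac{[\eta(z)\eta(2z)]^3}{\eta(3z)\eta(6z)}+9\frac{[\eta(3z)\eta(6z)]^3}{\eta(z)\eta(2z)}\right\}=\frac{[\eta(2z)\eta(3z)]^9}{[\eta(z)\eta(6z)]^3}+\frac{[\eta(z)\eta(6z)]^9}{[\eta(2z)\eta(3z)]^3}=f_{6,6}(z),
\]
whose left-hand side expands to $[\eta(z)\eta(2z)]^5\eta(3z)\eta(6z)+9\,\eta(z)\eta(2z)[\eta(3z)\eta(6z)]^5$. Both sides are holomorphic modular forms of weight $6$ on $\varGamma_0(6)$, so by the valence formula it suffices to match finitely many Fourier coefficients; the leading terms already agree, both beginning $q+4q^2+\cdots$. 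This is the step I expect to demand the most bookkeeping, although it is entirely mechanical---the very ``compare the $q$-expansions'' device used after \eqref{eq:IKM243_mod_prep}, with the number of required coefficients bounded by the dimension of the ambient space of modular forms.

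It remains to fix the orientation. Because $X_{6,3}(z)\to0$ at the infinite cusp, $x\to0$ corresponds to $z\to i\infty$ and $x\to\infty$ to $z\to i0^+$, so the $x$-integral over $(0,\infty)$ becomes a $z$-integral running from $i\infty$ down to $0$; reversing the direction supplies the sign that converts $\frac{\pi^5i}{4}$ into $\frac{\pi^5}{4i}$ and produces
\[
\IKM(2,6;1)=\frac{\pi^5}{4i}\int_0^{i\infty}f_{6,6}(z)\D z,
\]
which is the claimed formula. Beyond this bookkeeping, the only analytic points to confirm are the legitimacy of the Parseval step and of the termwise substitution across the whole range, both of which follow from the decay estimates above and the bijectivity recorded in Lemma~\ref{lm:X63}.
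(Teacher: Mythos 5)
Your proposal is correct and follows essentially the same route as the paper's own proof: the Parseval--Plancherel self-fusion of the Hankel transform \eqref{eq:IKKK_Hankel_eta}, the change of variables via the Jacobian \eqref{eq:xz_Jac} from Lemma~\ref{lm:Jac_xz}, and the weight-6 identity \eqref{eq:f66_two_forms} verified by comparing $q$-expansions of modular forms on $\varGamma_0(6)$. Your added bookkeeping (the constant $\frac{\pi^5 i}{4}$, the orientation reversal yielding $\frac{\pi^5}{4i}$, and the collapse $[Z_{6,3}]^2X_{6,3}=f_{4,6}$) is accurate and matches the paper's computation.
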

\begin{proof}
By  the Parseval--Plancherel theorem for Hankel transforms, we have\begin{align}
\int_0^\infty[I_0(t)]^2[K_0(t)]^6t\D t=\int_0^\infty\left|\int_0^\infty J_{0}(xt)I_0(t)[K_0(t)]^3t\D t\right|^2x\D x.\label{eq:IKM261_Hankel}
\end{align}  With the modular parametrization in \eqref{eq:IKKK_Hankel_eta}, and the Jacobian in \eqref{eq:xz_Jac},  we transition from an integration over the variable $x\in(0,\infty)$ to its counterpart over the variable $z$ on the $ \I z$-axis. Accordingly, we see that \begin{align}&\int_0^\infty[I_0(t)]^2[K_0(t)]^6t\D t\notag\\={}&\frac{\pi^{5}}{4i}\int_{0}^{i\infty}[\eta (z) \eta (2 z) \eta (3 z) \eta (6 z)]^{2}\left\{ \frac{[\eta (z ) \eta (2z )]^3}{\eta (3z ) \eta (6z )}+9\frac{ [\eta (3z ) \eta (6z )]^3}{\eta (z ) \eta (2z )} \right\}\D z\end{align} descends from \eqref{eq:IKM261_Hankel}.

Meanwhile, one can establish the following identity\begin{align}&
[\eta (z) \eta (2 z) \eta (3 z) \eta (6 z)]^{2}\left\{ \frac{[\eta (z ) \eta (2z )]^3}{\eta (3z ) \eta (6z )}+9\frac{ [\eta (3z ) \eta (6z )]^3}{\eta (z ) \eta (2z )} \right\}\notag\\={}&\frac{ [\eta (2 z) \eta (3 z)]^{9}}{[\eta (z)\eta (6 z)]^{3}}+\frac{ [\eta ( z) \eta (6 z)]^{9}}{[\eta (2z)\eta (3 z)]^{3}}\label{eq:f66_two_forms}
\end{align} by verifying that both sides are weight-6 modular forms on $ \varGamma_0(6)$, and checking the $q$-expansions of both sides up to sufficiently many terms \cite[][Remark 1]{ChanZudilin2010}.\end{proof}\begin{remark}Encouraged by Yun's recent contribution to Kloosterman sums \cite{Yun2015}, Broadhurst wrote \cite[][(135)]{Broadhurst2016}\begin{align}
f_{6,6}(z)=\frac{ [\eta (2 z) \eta (3 z)]^{9}}{[\eta (z)\eta (6 z)]^{3}}+\frac{ [\eta ( z) \eta (6 z)]^{9}}{[\eta (2z)\eta (3 z)]^{3}}=\sum_{n=1}^\infty a_{6,6}(n)e^{2\pi inz}
\end{align}and conjectured that $ \IKM(2,4;1)=\frac{27}{2}L(f_{6,6},5)$ for \cite[][(141) and (145)]{Broadhurst2016} \begin{align}
L(f_{6,6},5):=\sum_{n=1}^\infty \frac{a_{6,6}(n)}{n^{5}}\left( 1+\frac{2\pi n}{\sqrt{6}}+\frac{2\pi^2 n^2}{3} +\frac{2\pi^{3}}{9\sqrt{6}}+\frac{\pi^4n^4}{27}\right)e^{-2\pi n/\sqrt{6}}.
\end{align}This said the same thing as \begin{align}
\IKM(2,4;1)=\frac{9\pi^{5}}{i}\int_{i/\sqrt{6}}^{i\infty}\left\{ \frac{ [\eta (2 z) \eta (3 z)]^{9}}{[\eta (z)\eta (6 z)]^{3}}+\frac{ [\eta ( z) \eta (6 z)]^{9}}{[\eta (2z)\eta (3 z)]^{3}} \right\}\left( z^{4} +\frac{1}{36}\right)\D z,
\end{align}which is also equivalent to \eqref{eq:IKM261_Eichler} per a Fricke involution $ z\mapsto-1/(6z)$ and a modular transformation  $ \eta(-1/\tau)=\sqrt{\tau/i}\eta(\tau)$. \eor\end{remark}\begin{remark}In an earlier version of his conjecture, Broadhurst formulated the modular form $ f_{6,6} $ as \cite[][(90) and (91)]{Broadhurst2013MZV}\begin{align}
f_{6,6}(z)=[\eta (z) \eta (2 z) \eta (3 z) \eta (6 z)]^{2}\left( \sum_{m,n\in\mathbb Z}e^{2\pi i(m^2+mn+n^2)z} \right)\left( \sum_{m,n\in\mathbb Z}e^{4\pi i(m^2+mn+n^2)z} \right).
\end{align}This is of course compatible with the left-hand side of \eqref{eq:f66_two_forms}, in view of an identity by Borwein--Borwein--Garvan \cite[][Proposition 2.2(i)(ii) and Theorem 2.6(i)]{BorweinBorweinGarvan}.  \eor\end{remark}

Before handling other Bessel moments $ \IKM(a,b;1)$ satisfying $ a+b=8$, we need a modest generalization of Lemma \ref{lm:6BesselWick} and  modular parametrizations of some Hankel transforms not covered in \S\ref{sec:5Bessel}. \begin{lemma}[Some identities for Bessel moments]\begin{enumerate}[leftmargin=*,  label=\emph{(\alph*)},ref=(\alph*),
widest=a, align=left]\item The following formulae are true:\begin{align}\left(\frac2\pi\right)^6
\int_0^\infty[I_0(t)]^2[K_0(t)]^6t\D t={}&-\frac{8}{7}\int_0^\infty [J_0(x)]^6\{[J_0(x)]^2-7[Y_0(x)]^2\}x\D x,\label{eq:IKM261_Wick}\\\left(\frac2\pi\right)^4
\int_0^\infty[I_0(t)]^4[K_0(t)]^4t\D t={}&-\frac{4}{5}\int_0^\infty [J_0(x)]^6\{[J_0(x)]^2-5[Y_0(x)]^2\}x\D x.\label{eq:IKM441_Wick}\\\int_0^\infty [J_0(x)]^{4}[Y_{0}(x)]^4x\D x={}&-\frac{1}{5}\int_0^\infty [J_0(x)]^6\{[J_0(x)]^2-10[Y_0(x)]^2\}x\D x.\label{eq:JY_1_10_5}
\end{align}\item For $ x\in[0,2]$, we have\begin{align}
\int_0^\infty J_0(xt)[J_0(t)]^4t\D t=3\int_0^\infty J_0(xt)[J_0(t)]^{2}[Y_{0}(t)]^2t\D t.\label{eq:JJJJ_JJYY_Hankel_Wick}
\end{align}\item  For $ x\in[0,2]$, we have\begin{align}
\left(\frac2\pi\right)^3
\int_0^\infty I_{0}(xt)I_0(t)[K_0(t)]^3t\D t=-2\int_0^\infty J_0(xt)[J_0(t)]^{3}Y_{0}(t)t\D t.\label{eq:IIKKK_JJJJY_Wick}
\end{align} \end{enumerate}\end{lemma}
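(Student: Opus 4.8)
The plan is to run every identity through the two-step machine of Theorem~\ref{thm:Bologna} and Lemma~\ref{lm:6BesselWick}: a Wick rotation that converts a product of modified Bessel functions into a contour integral of $J_0$ and $H_0^{(1)}=J_0+iY_0$, followed by the Bessel--Hankel--Jordan cancelations of Lemma~\ref{lm:BHJ} to reshape the resulting $J_0$--$Y_0$ integrand into the advertised form.

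For part~(a) I would begin from $I_0(t)=J_0(it)$ and $K_0(t)=\tfrac{\pi i}{2}H_0^{(1)}(it)$, so that $(2/\pi)^{b}[K_0(t)]^{b}=i^{b}[H_0^{(1)}(it)]^{b}$; the parity of $b$ must be tracked carefully here, since $i^{6}=-1$ flips the overall sign relative to the $b=4$ case treated in Lemma~\ref{lm:6BesselWick}. After the substitution $z=it$ (whence $t\,\D t=-z\,\D z$) and a rotation of the contour from the positive imaginary axis onto the positive real axis---legitimate by Jordan's lemma, as every surviving exponential $e^{ikz}$ has $k\geq 2>0$---each moment becomes the real part of $\int_0^\infty [J_0(x)]^{a}(J+iY)^{b}x\,\D x$, whose real part is a polynomial in $J=J_0(x)$ and $Y=Y_0(x)$ carrying only even powers of $Y$. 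Abbreviating $A,B,C,D$ for $\int_0^\infty J^{8-2j}Y^{2j}x\,\D x$ with $j=0,1,2,3$, the whole of part~(a) then collapses to linear algebra among $A,B,C,D$. The two relations I need are supplied by \eqref{eq:JY_BHJ} at $\ell=1$ with $(m,n)=(3,5)$ and $(1,7)$ (each obeying $\ell-(m+n)/2<0$ and $m<n$), namely $A-10B+5C=0$ and $A-21B+35C-7D=0$. The third identity \eqref{eq:JY_1_10_5} and the second identity \eqref{eq:IKM441_Wick} drop out of the first relation alone, whereas the first identity \eqref{eq:IKM261_Wick}---whose rotation yields $A-15B+15C-D$---follows from the combination $\tfrac17$ of the second relation plus twice the first. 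One may equivalently package this as a single polynomial identity to be verified and then integrated against $x\,\D x$.

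Parts~(b) and~(c) use the same rotation with a Bessel kernel attached. For~(b), closing $\int_{i0^+-\infty}^{i0^++\infty}J_0(xz)J_0(z)[H_0^{(1)}(z)]^3 z\,\D z$ upward gives the kernel-weighted cancelation $\int_0^\infty J_0(xt)(J^4-3J^2Y^2)t\,\D t=0$, which is precisely \eqref{eq:JJJJ_JJYY_Hankel_Wick}. For~(c), rotating $I_0(xt)I_0(t)[K_0(t)]^3$ (noting $I_0(xt)=J_0(xz)$ under $z=it$, and $i^{3}=-i$) produces $-\int_0^\infty J_0(xt)(3J^3Y-JY^3)t\,\D t$ after taking real parts; the further cancelation $\int_0^\infty J_0(xt)\,JY(J^2-Y^2)\,t\,\D t=0$, coming from $\int_{i0^+-\infty}^{i0^++\infty}J_0(xz)[H_0^{(1)}(z)]^4 z\,\D z=0$, then trims this to the clean $-2\int_0^\infty J_0(xt)J^3Y\,t\,\D t$ of \eqref{eq:IIKKK_JJJJY_Wick}.

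The main obstacle, and the only genuinely analytic point, is the range of validity in~(b) and~(c): once a kernel $J_0(xt)$ is present, the dominant exponential on the first-quadrant arc is governed by $n-m-x$, where $m$ and $n$ count the $J_0$ and $H_0^{(1)}$ factors beside the kernel, so Jordan's lemma applies only for $x<n-m$. The binding case $n-m=2$ pins the range to $[0,2)$, with the endpoint $x=2$ recovered afterwards by continuity of the Hankel transforms. Part~(a) carries no such restriction, since there every exponent is positive regardless; the only care required there is the parity bookkeeping of the powers of $i$ together with checking that the chosen $(m,n)$ meet the hypotheses of Lemma~\ref{lm:BHJ}.
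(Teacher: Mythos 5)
Your proposal is correct and follows essentially the same route as the paper: the same Wick rotations with the same parity-of-$i^b$ sign bookkeeping, and the same Bessel--Hankel--Jordan cancelations --- your relations $A-10B+5C=0$ and $A-21B+35C-7D=0$ are exactly the paper's polynomial identities $J^{3}[(J+iY)^{5}-(-J+iY)^{5}]$ and $\frac{J}{14}[(J+iY)^{7}-(-J+iY)^{7}]$ integrated against $x\,\D x$, taken in the identical combination (coefficients $2$ and $\tfrac{1}{7}$) that the paper uses. Your parts (b) and (c), including the kernel-weighted vanishing integrals from $J_0(xz)J_0(z)[H_0^{(1)}(z)]^{3}z$ and $J_0(xz)[H_0^{(1)}(z)]^{4}z$, the restriction $x<n-m$ from Jordan's lemma, and the recovery of the endpoint $x=2$ by continuity, likewise mirror the paper's proof.
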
\begin{proof}\begin{enumerate}[leftmargin=*,  label=(\alph*),ref=(\alph*),
widest=a, align=left]\item
By Wick rotation, we have\begin{align}
\left(\frac2\pi\right)^6
\int_0^\infty[I_0(t)]^2[K_0(t)]^6t\D t={}&\R\int_0^{i\infty}[J_0(z)]^2[H_0^{(1)}(z)]^6z\D z\notag\\={}&\R\int_0^\infty [J_0(x)]^2[H_0^{(1)}(x)]^6x\D x\notag\\={}& \int_0^\infty J^2( J^6-15 J^4 Y^2+15 J^2 Y^4-Y^6 ) x\D x,\label{eq:IKM261_Wick_prep}
\end{align}for $ J=J_0(x),Y=Y_0(x)$. With\begin{align}&
J^2( J^6-15 J^4 Y^2+15 J^2 Y^4-Y^6 )\notag\\{}&-\frac{J}{14}  [(J+i Y)^7-(-J+i Y)^7]-J^3 [(J+i Y)^5-(-J+i Y)^5]\notag\\={}&-\frac{8}{7}  J^6 (J^2-7 Y^2),
\end{align} we are able to  reduce \eqref{eq:IKM261_Wick_prep}  into \eqref{eq:IKM261_Wick}, by virtue of  \eqref{eq:JY_BHJ} in Lemma \ref{lm:BHJ}.

One can prove \eqref{eq:IKM441_Wick}  in a similar vein.

To prove \eqref{eq:JY_1_10_5}, compute\begin{align}
\frac{J^3}{2} [(J+i Y)^5-(-J+i Y)^5]=J^{4}(J^4-10J^2Y^2+5Y^4)
\end{align}and invoke   \eqref{eq:JY_BHJ}.
 \item By a variation on \eqref{eq:JJ_3YY}, we have the following vanishing identity when $ x\in[0,2]$:\begin{align}
\int_0^\infty J_0(xt)J\frac{(J+iY)^3-(-J+iY)^3}{2}t\D t=\int_0^\infty J_0(xt)J^{2}(J^{2}-3Y^{2})t\D t=0,
\end{align}with $ J=J_0(t),Y=Y_0(t)$.\item By Wick rotation, we can show that \begin{align}
\left(\frac2\pi\right)^3
\int_0^\infty I_{0}(xt)I_0(t)[K_0(t)]^3t\D t=-\int_0^\infty J_0(xt)(3J^{3}Y-JY^{3})t\D t,
\end{align}where  $ J=J_0(t),Y=Y_0(t)$. Meanwhile, when  $ x\in[0,4]$, we also have \begin{align}
\int_0^\infty J_0(xt)\frac{(J+iY)^4-(-J+iY)^4}{8i}t\D t=\int_0^\infty J_0(xt)(J^{3}Y-JY^{3})t\D t=0,
\end{align}  by an extension of Lemma \ref{lm:BHJ}. \qedhere\end{enumerate}\end{proof}\begin{proposition}[Hankel transforms related to $ \JYM$] \begin{enumerate}[leftmargin=*,  label=\emph{(\alph*)},ref=(\alph*),
widest=a, align=left]\item For   $ z=\frac12+\frac{i}{2\sqrt{3}}e^{i\varphi},\varphi\in(0,\pi/3)$, we have \begin{align}
\int_0^\infty J_{0}\left(i\left[\frac{2 \eta (2z ) \eta (6 z )}{\eta (z ) \eta (3z)}\right]^{3}t\right)[J_0(t)]^4t\D t=\frac{1-6z+12z^{2}}{4\pi i}Z_{6,3}(z)\label{eq:JJJJ_Hankel_high}
\end{align}where  $ x=i\left[\frac{2 \eta (2z ) \eta (6 z )}{\eta (z ) \eta (3z)}\right]^{3}$ maps $ \varphi\in(0,\pi/3)$ bijectively to $ x\in(2,4)$; for $ x\geq4$, we have \begin{align}
\int_0^\infty J_{0}(xt)[J_0(t)]^4t\D t=0.\label{eq:JJJJ_high_vanish}
\end{align}Consequently, we have\begin{align}&
\int_0^\infty[J_0(x)]^8x\D x\notag\\={}&\frac{36}{\pi i}\int_{\frac{1}{2}+\frac{i}{2\sqrt{3}}}^{\frac{1}{2}+i\infty}f_{6,6}(z)(1-2z)^{2}\D z+\frac{4}{\pi i}\int_{\frac{1}{4}+\frac{i}{4\sqrt{3}}}^{\frac{1}{2}+\frac{i}{2\sqrt{3}}}f_{6,6}(z)(1-6z+12z^{2})^{2}\D z.\label{eq:J8_Eichler}
\end{align}\item For $z=\frac12+iy,y\in\left(\frac{1}{2\sqrt{3}},\infty\right) $ and    $ z=\frac12+\frac{i}{2\sqrt{3}}e^{i\varphi},\varphi\in[0,\pi/3)$, the formula\begin{align}
\int_0^\infty J_{0}\left(i\left[\frac{2 \eta (2z ) \eta (6 z )}{\eta (z ) \eta (3z)}\right]^{3}t\right)[J_0(t)]^{2}[Y_{0}(t)]^2t\D t=\frac{2z-1}{4\pi i}Z_{6,3}(z)\label{eq:JJYY_Hankel}
\end{align} parametrizes $ \int_0^\infty J_0(xt)[J_0(t)]^{2}[Y_{0}(t)]^2t\D t$ for   $ x\in(0,4)$, and brings us\begin{align}&
\int_0^\infty [J_0(x)]^6[Y_0(x)]^2x\D x\notag\\={}&\frac{12}{\pi i}\int_{\frac{1}{2}+\frac{i}{2\sqrt{3}}}^{\frac{1}{2}+i\infty}f_{6,6}(z)(1-2z)^{2}\D z-\frac{4}{\pi i}\int_{\frac{1}{4}+\frac{i}{4\sqrt{3}}}^{\frac{1}{2}+\frac{i}{2\sqrt{3}}}f_{6,6}(z)(1-2z)(1-6z+12z^{2})\D z.\label{eq:J6Y2_Eichler}
\end{align}In addition, for $z=(1+e^{i\psi})/6,\psi\in[\pi/3,\pi) $, we have\begin{align}
\int_0^\infty J_{0}\left(i\left[\frac{2 \eta (2z ) \eta (6 z )}{\eta (z ) \eta (3z)}\right]^{3}t\right)[J_0(t)]^{2}[Y_{0}(t)]^2t\D t=-\frac{z(1-3z)}{\pi i}Z_{6,3}(z),\label{eq:JJYY_Hankel'}
\end{align}which parametrizes $ \int_0^\infty J_0(xt)[J_0(t)]^{2}[Y_{0}(t)]^2t\D t$ for   $ x\in[4,\infty)$ and leads us to \begin{align}&
\int_0^\infty [J_0(x)]^{4}[Y_0(x)]^4x\D x\notag\\={}&\frac{4}{\pi i}\int_{\frac{1}{4}+\frac{i}{4\sqrt{3}}}^{\frac{1}{2}+i\infty}f_{6,6}(z)(1-2z)^{2}\D z+\frac{64}{\pi i}\int_0^{\frac{1}{4}+\frac{i}{4\sqrt{3}}}f_{6,6}(z)z^{2}(1-3z)^{2}\D z.
\end{align} \item For  $z=\frac12+iy,y\in\left(\frac{1}{2\sqrt{3}},\infty\right) $, we have\begin{align}
\int_0^\infty J_{0}\left(i\left[\frac{2 \eta (2z ) \eta (6 z )}{\eta (z ) \eta (3z)}\right]^{3}t\right)[J_0(t)]^{3}Y_{0}(t)t\D t={}&-\frac{1}{4\pi }Z_{6,3}(z),\label{eq:J3Y_Hankel_S}
\intertext{which parametrizes $ \int_0^\infty J_{0}(xt)[J_0(t)]^{3}Y_{0}(t)t\D t$ for $ x\in(0,2)$; for     $ z=\frac12+\frac{i}{2\sqrt{3}}e^{i\varphi},\varphi\in[0,\pi/3)$, the identity}
\int_0^\infty J_{0}\left(i\left[\frac{2 \eta (2z ) \eta (6 z )}{\eta (z ) \eta (3z)}\right]^{3}t\right)[J_0(t)]^{3}Y_{0}(t)t\D t={}&\frac{1-6z+6z^{2}}{4\pi }Z_{6,3}(z)\label{eq:J3Y_Hankel_M}
\intertext{parametrizes $ \int_0^\infty J_{0}(xt)[J_0(t)]^{3}Y_{0}(t)t\D t$ for $ x\in[2,4)$; for $z=(1+e^{i\psi})/6,\psi\in[\pi/3,\pi) $, we have}\int_0^\infty J_{0}\left(i\left[\frac{2 \eta (2z ) \eta (6 z )}{\eta (z ) \eta (3z)}\right]^{3}t\right)[J_0(t)]^{3}Y_{0}(t)t\D t={}&-\frac{3z^2}{2\pi}Z_{6,3}(z),\label{eq:J3Y_Hankel_L}\end{align}a formula that parametrizes $ \int_0^\infty J_{0}(xt)[J_0(t)]^{3}Y_{0}(t)t\D t$ for $ x\in[4,\infty)$. As a result, the following identity holds:\begin{align}&
\int_0^\infty [J_0(x)]^6[Y_0(x)]^2x\D x\notag\\={}&-\frac{4}{\pi i}\int_{\frac{1}{2}+\frac{i}{2\sqrt{3}}}^{\frac{1}{2}+i\infty}f_{6,6}(z)\D z-\frac{4}{\pi i}\int_{\frac{1}{4}+\frac{i}{4\sqrt{3}}}^{\frac{1}{2}+\frac{i}{2\sqrt{3}}}f_{6,6}(z)(1-6z+6z^{2})^{2}\D z\notag\\{}&-\frac{144}{\pi i}\int_0^{\frac{1}{4}+\frac{i}{4\sqrt{3}}}f_{6,6}(z)z^{4}\D z.\label{eq:J6Y2_Eichler'}
\end{align}\end{enumerate}\end{proposition}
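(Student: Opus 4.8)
The plan is to establish all the identities in parts (a)--(c) by the same two-stage mechanism that proved Propositions \ref{prop:IKM241} and \ref{prop:JYM601}. First I would pin down the modular parametrizations of the Hankel-type transforms $\int_0^\infty J_0(xt)B(t)t\,\D t$, with $B$ a product of four zeroth-order Bessel functions, on each of the three $x$-ranges $(0,2)$, $[2,4)$ and $[4,\infty)$; then I would feed these into the Parseval--Plancherel identity and convert the resulting $x$-integral into a contour integral of $f_{6,6}$ along a piecewise geodesic. The computational backbone is the substitution $x=i\bigl[2\eta(2z)\eta(6z)/(\eta(z)\eta(3z))\bigr]^{3}$, under which $x^{2}=-64X_{6,3}(z)$, together with the Jacobian $\frac1x\frac{\D x}{\D z}=\pi i\{\ldots\}$ of Lemma \ref{lm:Jac_xz} and the eta-quotient identity $[Z_{6,3}(z)]^{2}X_{6,3}(z)=f_{4,6}(z)$, which upon multiplication by the weight-$4$ bracket $\{\ldots\}$ collapses through \eqref{eq:f66_two_forms} to $f_{6,6}(z)$.

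For Stage 1 I would exploit that each such transform is a period of the same $K3$ family, so that $x$ times it is annihilated by the symmetric-square operator $\widehat A_4$ of \eqref{eq:op_A4}, and hence by \eqref{eq:fx_Sym2} the transform equals $Z_{6,3}(z)$ times a polynomial of degree at most two in $z$. On $(0,2)$ the coefficients come directly from the Wick rotations \eqref{eq:JJJJ_JJYY_Hankel_Wick} and \eqref{eq:IIKKK_JJJJY_Wick} combined with the \S\ref{sec:5Bessel} parametrizations \eqref{eq:JJJJ_Hankel_eta} and \eqref{eq:IKKK_I}; for instance \eqref{eq:J3Y_Hankel_S} is immediate upon substituting \eqref{eq:IKKK_I} into \eqref{eq:IIKKK_JJJJY_Wick}, and \eqref{eq:JJYY_Hankel} on $(0,2)$ follows from \eqref{eq:JJJJ_JJYY_Hankel_Wick} and \eqref{eq:JJJJ_Hankel_eta}. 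The substantive claims are the formulae on $[2,4)$ and $[4,\infty)$, namely \eqref{eq:JJJJ_Hankel_high}, \eqref{eq:JJJJ_high_vanish}, \eqref{eq:JJYY_Hankel'}, \eqref{eq:J3Y_Hankel_M} and \eqref{eq:J3Y_Hankel_L}: here I would analytically continue the solution across the regular singular points $x=2$ and $x=4$ of $\widehat A_4$, where the parametrizing point turns from the line $\R z=\frac12$ onto the arc $z=\frac12+\frac{i}{2\sqrt3}e^{i\varphi}$ and then onto the semicircle $z=(1+e^{i\psi})/6$, fixing the new monodromy coefficients by matching value and first derivative at each junction together with a finiteness or vanishing condition at the far end. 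The vanishing \eqref{eq:JJJJ_high_vanish} is cleanest, being the statement that the $4$-step planar walk cannot exceed radius $4$, so $p_4(x)/x\equiv0$ there.

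For Stage 2 I would Parseval the appropriate factorizations: $[J_0]^8=([J_0]^4)^2$ for \eqref{eq:J8_Eichler}; both the cross form $[J_0]^6[Y_0]^2=[J_0]^4\cdot[J_0]^2[Y_0]^2$ for \eqref{eq:J6Y2_Eichler} and the square form $[J_0]^6[Y_0]^2=([J_0]^3Y_0)^2$ for \eqref{eq:J6Y2_Eichler'}; and $[J_0]^4[Y_0]^4=([J_0]^2[Y_0]^2)^2$ for the remaining identity. Substituting the Stage-1 parametrization turns the squared (or mixed product of) transforms into $\pm\frac{[\mathrm{poly}(z)]^{2}}{16\pi^{2}}[Z_{6,3}(z)]^{2}$, the sign being $i^{-2}=-1$ exactly when the parametrization carries an explicit factor $i$ in its denominator, while $x\,\D x=x^{2}\cdot\frac1x\frac{\D x}{\D z}\,\D z=-64X_{6,3}(z)\,\pi i\{\ldots\}\,\D z$. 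Using $[Z_{6,3}]^{2}X_{6,3}\{\ldots\}=f_{6,6}$ the integrand becomes a constant multiple of $[\mathrm{poly}(z)]^{2}f_{6,6}(z)\,\D z$, and summing the three oriented segments $\frac12+i\infty\to\frac12+\frac{i}{2\sqrt3}\to\frac14+\frac{i}{4\sqrt3}\to0$ traced out as $x$ increases, each oriented oppositely to the displayed contours, reproduces the stated Eichler integrals; for \eqref{eq:J6Y2_Eichler'} the three pieces carry the polynomials $1$, $(1-6z+6z^{2})^{2}$ and $36z^{4}$, matching the three displayed terms.

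The main obstacle is Stage 1 on $[2,4)$ and $[4,\infty)$: correctly continuing each modular parametrization through the thresholds and determining the degree-two polynomials. This is a monodromy computation for $\widehat A_4$, carried out concretely through the genus-zero uniformization of $\varGamma_0(6)_{+3}$ in Lemma \ref{lm:X63} — which dictates how $z$ migrates from the vertical line to the arc and then to the semicircle — reinforced by matching of values and derivatives at $x=2,4$ and by the finiteness and support constraints that suppress the unwanted growing solutions. Once these parametrizations are secured, Stage 2 is a bookkeeping computation identical in spirit to the proof of Proposition \ref{prop:JYM601}.
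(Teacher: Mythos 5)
Your plan reproduces the paper's proof in all essentials. Stage 1 on $(0,2)$ is exactly the paper's: \eqref{eq:JJYY_Hankel} is obtained from \eqref{eq:JJJJ_Hankel_eta} together with \eqref{eq:JJJJ_JJYY_Hankel_Wick}, and \eqref{eq:J3Y_Hankel_S} from \eqref{eq:IKKK_I} together with \eqref{eq:IIKKK_JJJJY_Wick}; on $[2,4)$ and $[4,\infty)$ the paper likewise writes each transform as $Z_{6,3}(z)(c_0+c_1z+c_2z^2)$ via \eqref{eq:fx_Sym2}; your probabilistic support argument for \eqref{eq:JJJJ_high_vanish} is the alternative the paper itself records for $p_3$ (its official route here is the Wick rotation $\int_0^\infty H_0^{(1)}(xt)[J_0(t)]^4t\D t=\frac{2i}{\pi}\int_0^\infty [I_0(t)]^4K_0(xt)t\D t$, which forces the real part to vanish); and your Stage 2 --- Parseval fusion with the factorizations $([J_0]^4)^2$, $[J_0]^4\cdot[J_0]^2[Y_0]^2$, $([J_0]^3Y_0)^2$, $([J_0]^2[Y_0]^2)^2$, the Jacobian of Lemma \ref{lm:Jac_xz}, the identity $[Z_{6,3}(z)]^2X_{6,3}(z)\{\cdots\}=f_{6,6}(z)$ via \eqref{eq:f66_two_forms}, and the orientation/sign bookkeeping you describe --- is precisely the computation behind \eqref{eq:J8_Eichler}, \eqref{eq:J6Y2_Eichler}, \eqref{eq:J6Y2_Eichler'} and the $[J_0]^4[Y_0]^4$ identity.

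The one step that would fail as literally stated is fixing the connection constants by ``matching value and first derivative at each junction.'' The junctions are square-root turning points of the modular parametrization: at $z_c=\frac12+\frac{i}{2\sqrt3}$ (i.e.\ $x=2$) the point $z_c$ is fixed by $\widehat W_3$ with local multiplier $\widehat W_3'(z_c)=-1$, so invariance of the Hauptmodul forces $X_{6,3}'(z_c)=0$, hence $x-2\propto(z-z_c)^2$ and $\frac{\D z}{\D x}\asymp|x-2|^{-1/2}$; the transforms are therefore generically \emph{not} differentiable in $x$ there --- this is the familiar cusp of $p_4$ at $x=2$, where the one-sided derivatives of $p_4(x)/x$ blow up like $|x-2|^{-1/2}$ (note that the $z$-derivatives of $3(2z-1)Z_{6,3}(z)$ and $(1-6z+12z^2)Z_{6,3}(z)$ differ at $z_c$ even though the values agree). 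So $C^1$ matching supplies no equation. The paper instead fixes the three constants of \eqref{eq:JJJJ_Hankel_high} by plain continuity at $x=2$ together with the known endpoint asymptotics as $x\to4^-$ from \cite[Theorem 4.1]{BSWZ2012}, and disposes of \eqref{eq:JJYY_Hankel'}, \eqref{eq:J3Y_Hankel_M} and \eqref{eq:J3Y_Hankel_L} by analogous continuity-plus-boundary-behavior conditions (e.g.\ decay as $x\to\infty$); equivalently, one matches coefficients in the local Frobenius basis at the regular singular point rather than derivatives. Since your own ``finiteness or vanishing condition at the far end'' is already half of this, the repair is local: replace derivative matching by these endpoint conditions, after which your argument coincides with the paper's.
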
\begin{proof}

\begin{enumerate}[leftmargin=*,  label=(\alph*),ref=(\alph*),
widest=a, align=left]\item \label{itm:JJJJ_Hankel}Judging from \eqref{eq:fx_Sym2}, we know that \begin{align}
\int_0^\infty J_{0}(xt)[J_0(t)]^4t\D t=Z_{6,3}(z)(c_{0}+c_1z+c_2 z^2),\quad x\in(2,4),
\end{align}where the constants $ c_0$, $c_1$ and $c_2$ can be determined by the continuity at $x=2$ and the asymptotic behavior as $ x\to4^-$ \cite[][Theorem 4.1]{BSWZ2012}.
This
proves \eqref{eq:JJJJ_Hankel_high}.

To show \eqref{eq:JJJJ_high_vanish}, read off the real part from the following Wick rotation:\begin{align}
\int_0^\infty H_{0}^{(1)}(xt)[J_0(t)]^4t\D t=\frac{2i}{\pi}\int_0^\infty [I_0(t)]^4K_{0}(xt)t\D t,\quad \forall x\geq4.
\end{align}

Applying the Parseval--Plancherel theorem for Hankel transforms to \eqref{eq:JJJJ_Hankel_eta}  and \eqref{eq:JJJJ_Hankel_high}, we arrive at \eqref{eq:J8_Eichler}.\item For $ z=\frac12+iy,y\in\left(\frac{1}{2\sqrt{3}},\infty\right)$, the Hankel transform formula in \eqref{eq:JJYY_Hankel} follows from  \eqref{eq:JJJJ_Hankel_eta}  and \eqref{eq:JJJJ_JJYY_Hankel_Wick}. The remaining arguments run parallel to those in \ref{itm:JJJJ_Hankel}.  \item To verify \eqref{eq:J3Y_Hankel_S}, simply combine \eqref{eq:IKKK_I} with \eqref{eq:IIKKK_JJJJY_Wick}. The rest founds on similar principles as the proof of \ref{itm:JJJJ_Hankel}.   \qedhere\end{enumerate}
\end{proof}\begin{remark}We note that Borwein \textit{et al.}~expressed $ \int_0^\infty J_{0}(xt)[J_0(t)]^4t\D t,x\in(2,4)$ as generalized hypergeometric series \cite[][Theorem 4.7]{BSWZ2012}, but did not give a modular parametrization. \eor\end{remark}
\begin{proposition}[$Y$- and $K$-transforms]For $ z=\frac12+iy,y\in\left(\frac{1}{2\sqrt{3}},\infty\right) $, we have\begin{align}&
\int_0^\infty I_{0}\left(i\left[\frac{2 \eta (2z ) \eta (6 z )}{\eta (z ) \eta (3z)}\right]^{3}t\right)[K_0(t)]^{4}t\D t+4\int_0^\infty K_{0}\left(i\left[\frac{2 \eta (2z ) \eta (6 z )}{\eta (z ) \eta (3z)}\right]^{3}t\right)I_{0}(t)[K_0(t)]^{3}t\D t\notag\\={}&\frac{\pi^3(2z-1)}{8i}Z_{6,3}(z).\label{eq:IKKKK_KIKKK}
\end{align}For $ z/i>0$, we have \begin{align}&
\int_0^\infty J_{0}\left(\left[\frac{2 \eta (2z ) \eta (6 z )}{\eta (z ) \eta (3z)}\right]^{3}t\right)[K_0(t)]^{4}t\D t-2\pi\int_0^\infty Y_{0}\left(\left[\frac{2 \eta (2z ) \eta (6 z )}{\eta (z ) \eta (3z)}\right]^{3}t\right)I_{0}(t)[K_0(t)]^{3}t\D t\notag\\={}&\frac{\pi^3z}{4i}Z_{6,3}(z).\label{eq:JKKKK_YIKKK}
\end{align}\end{proposition}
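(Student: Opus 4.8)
The plan is to adapt the mechanism of Proposition \ref{prop:Y_K_transforms} to the symmetric-square setting of \S\ref{eq:Sym2}, replacing the second-order operator $\widehat B_3$ by the third-order Picard--Fuchs operator $\widehat A_4$. The key structural fact I would exploit is that $\int_0^\infty J_0(xt)I_0(t)[K_0(t)]^3t\,\D t$ solves the \emph{homogeneous} equation $\widehat A_4[x\,(\cdot)]=0$ (this is \eqref{eq:IKKK_Hankel_eta} together with the symmetric-square form \eqref{eq:fx_Sym2}), whereas $\int_0^\infty J_0(xt)[K_0(t)]^4t\,\D t$ solves the \emph{inhomogeneous} equation $\widehat A_4[x\,(\cdot)]=-24x^3$ (the $J_0$-transform is the analytic continuation of $x\mathscr I(x^2)$ in \eqref{eq:A4_inhom}, and the source depends only on the $t\to0$ behaviour of the $[K_0]^4$ factor, which is common to both kernels). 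I would prove the real-argument identity \eqref{eq:JKKKK_YIKKK} on the imaginary axis first, and then deduce \eqref{eq:IKKKK_KIKKK} by writing $Y_0=(H_0^{(1)}-J_0)/i$, substituting the known Hankel transform \eqref{eq:IKKK_Hankel_eta} for the resulting $J_0$-piece, and analytically continuing the argument from the positive real axis to the positive imaginary axis, under which $J_0(xt)\mapsto I_0(xt)$ and $\tfrac{2\pi}{i}H_0^{(1)}(ixt)\mapsto-4K_0(xt)$ by $\tfrac{\pi i}{2}H_0^{(1)}(ix)=K_0(x)$. This continuation automatically converts the combination $\int J_0[K_0]^4-2\pi\int Y_0 I_0[K_0]^3$ into $\int I_0[K_0]^4+4\int K_0 I_0[K_0]^3$ and shifts the right-hand side from $\tfrac{\pi^3z}{4i}Z_{6,3}$ to $\tfrac{\pi^3(2z-1)}{8i}Z_{6,3}$ through the extra term $\tfrac{\pi^2}{16}Z_{6,3}$.

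The first genuine computation is the inhomogeneity of the $Y_0$-transform. By differentiating under the integral sign and integrating by parts, exactly as in the derivation of $\widehat B_3\{\int_0^\infty Y_0(\sqrt u\,t)\pi I_0(t)[K_0(t)]^2t\,\D t\}=1$, the logarithmic behaviour $Y_0(xt)\sim\tfrac2\pi\log(xt)$ at $t=0$ leaves a residual boundary contribution, and I expect $\widehat A_4\big[x\int_0^\infty Y_0(xt)I_0(t)[K_0(t)]^3t\,\D t\big]=-\tfrac{12}{\pi}x^3$. The coefficient $-2\pi$ in \eqref{eq:JKKKK_YIKKK} is then precisely the one that annihilates the combined source $-24x^3-2\pi\big(-\tfrac{12}{\pi}x^3\big)=0$, so the left-hand side of \eqref{eq:JKKKK_YIKKK} is a homogeneous solution and, by \eqref{eq:fx_Sym2}, must equal $Z_{6,3}(z)(k_0+k_1z+k_2z^2)$.

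It remains to pin down the three constants. At the cusp $z\to i\infty$ (so $x\to0$), the transform $\int J_0(xt)[K_0(t)]^4t\,\D t$ stays bounded (tending to $\tfrac{7\zeta(3)}{8}$), while $\int Y_0(xt)I_0(t)[K_0(t)]^3t\,\D t$ carries only a single logarithm; hence the left-hand side grows at most like $\log x$, no $(\log x)^2$ term occurs, and $k_2=0$. Matching the $\log x$ coefficient by means of $\int_0^\infty I_0(t)[K_0(t)]^3t\,\D t=\tfrac{\pi^2}{16}$ (the $n=0$ case of \eqref{eq:IKKK_Domb}) and the cusp expansion $x\sim 8e^{i\pi z}$ then yields $k_1=\tfrac{\pi^3}{4i}$. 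Finally I would fix the remaining constant after passing to \eqref{eq:IKKKK_KIKKK}: at the CM point $z_*=\tfrac12+\tfrac{i\sqrt5}{2\sqrt3}$ one has $X_{6,3}(z_*)=-\tfrac1{64}$, hence $x=1$, where $\int_0^\infty I_0(t)[K_0(t)]^4t\,\D t=\pi^2C$ by Theorem \ref{thm:Bologna} and $\int_0^\infty K_0(t)I_0(t)[K_0(t)]^3t\,\D t=\int_0^\infty I_0(t)[K_0(t)]^4t\,\D t$ trivially, so the left-hand side equals $5\pi^2C$; this matches $\tfrac{\pi^3(2z_*-1)}{8i}Z_{6,3}(z_*)=\pi^2\sqrt5\,c=5\pi^2C$ via Table \ref{tab:spec_X63_Z63} and $c=\sqrt5\,C$, forcing $k_0=0$ on the imaginary axis and confirming both identities.

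The hard part will be the exact evaluation of the source term $\widehat A_4\big[x\int_0^\infty Y_0(xt)I_0(t)[K_0(t)]^3t\,\D t\big]=-\tfrac{12}{\pi}x^3$: it demands careful bookkeeping of the boundary terms generated by the logarithmic singularity of $Y_0$ at the origin when the third-order operator $\widehat A_4$ is integrated by parts against the triple product $I_0[K_0]^3$, and getting this numerical constant exactly right is what makes the coefficients $-2\pi$ and $+4$ produce a clean homogeneous solution rather than an Eichler integral. A secondary point requiring care is the justification of the analytic continuation across $x=0$ that links the imaginary-axis and $\R z=\tfrac12$ regimes, together with the convergence of the $K_0$-transforms in $x\in(0,2)$.
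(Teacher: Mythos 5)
Your proposal is, at bottom, the paper's own method: annihilation by the Picard--Fuchs operator $\widehat A_4$ of \eqref{eq:op_A4}, the symmetric-square ansatz $Z_{6,3}(z)(k_0+k_1z+k_2z^2)$ from \eqref{eq:fx_Sym2}, $k_2=0$ and $k_1$ fixed by the logarithmic cusp asymptotics via $\IKM(1,3;1)=\pi^2/16$, and $k_0=0$ fixed by the Bologna evaluation at $z=\frac12+\frac{i\sqrt5}{2\sqrt3}$, where $x=1$ and the left-hand side of \eqref{eq:IKKKK_KIKKK} collapses to $5\,\IKM(1,4;1)=5\pi^2C$ --- all of which matches the paper's proof step for step. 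The genuine difference is directional: the paper proves \eqref{eq:IKKKK_KIKKK} first, because there the single new computation, $\widehat A_4\big\{\int_0^\infty K_0(xt)I_0(t)[K_0(t)]^3xt\,\D t\big\}=6x^3$ (so that $-24x^3+4\cdot6x^3=0$), is carried out against exponentially decaying integrands, where differentiation under the integral sign and the integrations by parts are absolutely convergent; it then continues to the imaginary axis and extracts real parts to obtain \eqref{eq:JKKKK_YIKKK}. Your continuation runs backwards, which is legitimate --- your substitution $Y_0=(H_0^{(1)}-J_0)/i$ together with the $\frac{\pi^3}{8i}Z_{6,3}$ shift coming from \eqref{eq:IKKK_Hankel_eta} is exactly the paper's real-part extraction read in reverse --- but it pushes the hard inhomogeneity computation onto the oscillatory $Y_0$-kernel, where the $t\to0$ logarithm and the slow oscillatory decay coexist; this is precisely the bookkeeping you flag as ``the hard part'' and leave undone, and it is the step the paper's ordering is designed to avoid.

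Beyond that strategic point, one asserted equation is wrong as written and would corrupt your constants if used verbatim. The operator $\widehat A_4$ is not invariant under $x\mapsto ix$: writing $\widehat A_4=x^4(\theta+1)^3-4x^2\theta(5\theta^2+3)+64(\theta-1)^3$ with $\theta=x\frac{\D}{\D x}$, the substitution flips the sign of the middle term. Hence the $J_0$-transform does \emph{not} satisfy $\widehat A_4[\,x\,(\cdot)\,]=-24x^3$ in its real argument: setting $g(\xi)=\xi\int_0^\infty J_0(\xi t)[K_0(t)]^4t\,\D t=i\,h(-i\xi)$ with $h(x)=x\mathscr I(x^2)$ and transporting \eqref{eq:A4_inhom} gives $\widehat A_4^{\sigma}g=+24\xi^3$ for the sign-conjugated operator $\widehat A_4^{\sigma}=x^4(\theta+1)^3+4x^2\theta(5\theta^2+3)+64(\theta-1)^3$. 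Your heuristic that the source ``depends only on the $t\to0$ behaviour of the $[K_0]^4$ factor, common to both kernels'' fails for exactly this reason, and the $Y_0$-source must accordingly be computed against $\widehat A_4^{\sigma}$, coming out as $+\frac{12}{\pi}\xi^3$ so that $24-2\pi\cdot\frac{12}{\pi}=0$. The homogeneous theory is unharmed --- under $\xi=\big[\frac{2\eta(2z)\eta(6z)}{\eta(z)\eta(3z)}\big]^3$, $z/i>0$, the kernel of $\widehat A_4^{\sigma}$ is still spanned by $\xi\,Z_{6,3}(z)z^j$, $j\in\{0,1,2\}$, by continuation of \eqref{eq:fx_Sym2} --- so your architecture survives, and your constant-matching ($k_2=0$, $k_1=\frac{\pi^3}{4i}$ from $x\sim8e^{i\pi z}$, then $k_0=0$ at the CM point) is correct, but only after this operator bookkeeping is repaired and the $Y_0$-side source term is actually established.
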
\begin{proof}Let $ \widehat A_4$ be the Picard--Fuchs operator given in \eqref{eq:op_A4}, then one can verify \begin{align}
\widehat A_4\left\{ \int_0^\infty K_0(xt)I_0(t)[K_0(t)]^3xt\D t \right\}=6x^{3}
\end{align}by differentiation under the integral sign, and integration by parts \cite[cf.][\S9]{Vanhove2014Survey}. Comparing this to  \eqref{eq:A4_inhom}, we know that $ \int_0^\infty I_0(xt)[K_0(t)]^4xt\D t+4 \int_0^\infty K_0(xt)I_0(t)[K_0(t)]^3xt\D t$ is annihilated by $ \widehat A_4$. Therefore, the left-hand side of \eqref{eq:IKKKK_KIKKK} must assume the form\begin{align}
Z_{6,3}(z)\left[ k_0 +k_1(2z-1)+k_2(2z-1)^2\right],
\end{align}for certain  constants $ k_0$, $k_1$, and $k_2$. Since $ K_0(xt)=-\log(xt)+O(1)$ as $ x\to0^+$, and $ \int_0^\infty I_0(t)[K_0(t)]^3t\D t=\pi^2/16$ \cite[][(54)]{Bailey2008}, the left-hand side of \eqref{eq:IKKKK_KIKKK} behaves  like $ \frac{\pi^3(2z-1+o(z))}{8i}Z_{6,3}(z)$ as $ z\to \frac12+i\infty$. This shows that $ k_1=\frac{\pi^3}{8i}$ and $k_2=0$. To demonstrate that  $ k_0=0$, simply check the special value at $ z=\frac12+\frac{i\sqrt{5}}{2\sqrt{3}}$ against Theorem \ref{thm:Bologna} and Table \ref{tab:spec_X63_Z63}.

As we perform analytic continuation on the left-hand side of \eqref{eq:IKKKK_KIKKK} to the positive $ \I z$-axis, and extract the real part, we arrive at \eqref{eq:JKKKK_YIKKK}.  \end{proof}\begin{remark}From a Hilbert transform formula \cite[cf.][(3.2)]{HB1}\begin{align}
\mathscr P\int_{-\infty}^\infty\frac{2\pi I_0(t)[K_0(|t|)]^3|t|\D t}{\pi(\tau-t)}=\{[\pi I_0(\tau)]^2-[K_0(|\tau|)]^2\}[K_0(|\tau|)]^2\tau,\quad \forall\tau\in\mathbb R\smallsetminus\{0\},
\end{align}we can deduce [cf.~\eqref{eq:Hilbert_JY_pair}]\begin{align}
\int_0^\infty J_0(xt)\{[\pi I_0(t)]^2-[K_0(t)]^2\}[K_0(t)]^2t\D t=-2\pi\int_0^\infty Y_0(xt)I_0(t)[K_0(t)]^3t\D t,\quad \forall x>0.
\end{align} Thus, we may recast \eqref{eq:JKKKK_YIKKK} into \begin{align}
\int_0^\infty J_{0}\left(\left[\frac{2 \eta (2z ) \eta (6 z )}{\eta (z ) \eta (3z)}\right]^{3}t\right)[I_{0}(t)]^{2}[K_0(t)]^{2}t\D t={}&\frac{\pi z}{4i}Z_{6,3}(z)\label{eq:JKKKK_YIKKK'}\tag{\ref{eq:JKKKK_YIKKK}$'$}
\end{align}for $z/i>0$.\eor\end{remark}\begin{remark}From \eqref{eq:IKKK_KIKK} and \eqref{eq:IKKKK_KIKKK}, we see that when  $ n$ is 3 or 4, and $ u\in(0,1)$, the expression\begin{align}
\int_0^\infty I_0(\sqrt{u}t)[K_0(t)]^{n}t\D t+n\int_0^\infty K_0(\sqrt{u}t)I_{0}(t)[K_0(t)]^{n-1}t\D t
\end{align}is annihilated by a differential operator (in $u$) of order $n-1$. The same pattern actually applies to all $n\in\mathbb Z_{\geq2}$, and the corresponding differential operator has been constructed by Vanhove in \cite[][\S9]{Vanhove2014Survey}. The steps of integrations by parts leading to these homogeneous differential equations are described in \cite[][Lemma 4.2]{Zhou2017BMdet}. Such homogeneous differential equations are crucial in our recent proofs \cite[][\S4]{Zhou2017BMdet} of two determinant formulae proposed by Broadhurst--Mellit \cite[][Conjectures 4 and 7]{Broadhurst2016}.    \eor\end{remark}\subsection{Critical $L$-values for Bessel moments}
A conjectural sum rule    $ 9\pi^2\IKM(4,4;1)-14\IKM(2,$ $6;1)=0$ dated back to 2008 \cite[][at the end of \S6.3, between (228) and (229)]{Bailey2008}, and was restated as an open problem in 2016 \cite[][(147)]{Broadhurst2016}. It has also been conjectured that  \cite[][(139) and (143)]{Broadhurst2016} \begin{align}
\int_0^\infty[I_0(t)]^4[K_0(t)]^4t\D t=L(f_{6,6},3):=\sum_{n=1}^\infty \frac{a_{6,6}(n)}{n^{3}}\left( 2+\frac{4\pi n}{\sqrt{6}}+\frac{2\pi^2 n^2}{3} \right)e^{-2\pi n/\sqrt{6}}.
\end{align} With the preparations in \S\ref{subsec:8Bessel_Hankel_Wick}, we can verify these claims.\begin{theorem}[Relation between  $ \IKM(2,6;1)$ and $ \IKM(4,4;1)$]\begin{enumerate}[leftmargin=*,  label=\emph{(\alph*)},ref=(\alph*),
widest=a, align=left]\item We have a vanishing identity\begin{align}
\int^{\frac12+i\infty}_{\frac{1}{2}+\frac{i}{2\sqrt{3}}}f_{6,6}(z)(1-2z)^{2}\D z+\int_{\frac{1}{4}+\frac{i}{4\sqrt{3}}}^{\frac{1}{2}+\frac{i}{2\sqrt{3}}}f_{6,6}(z)(1 - 4 z + 8 z^2)\D z=0.
\label{eq:2arc_sum0}\end{align}\item We have a sum rule\begin{align}
9\pi^2\int_0^\infty[I_0(t)]^4[K_0(t)]^4t\D t-14\int_0^\infty[I_0(t)]^2[K_0(t)]^6t\D t=0.\label{eq:IKM441_IKM261_sum}
\end{align}\end{enumerate}\end{theorem}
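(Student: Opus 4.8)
The plan is to derive the sum rule (b) from the vanishing identity (a), after first recasting both $\IKM$ moments in terms of the random walk integrals $P:=\JYM(8,0;1)=\int_0^\infty[J_0(x)]^8x\D x$ and $Q:=\JYM(6,2;1)=\int_0^\infty[J_0(x)]^6[Y_0(x)]^2x\D x$. Feeding the two Wick rotations \eqref{eq:IKM261_Wick} and \eqref{eq:IKM441_Wick}, namely $(2/\pi)^6\IKM(2,6;1)=-\tfrac{8}{7}(P-7Q)$ and $(2/\pi)^4\IKM(4,4;1)=-\tfrac45(P-5Q)$, into the left-hand side of \eqref{eq:IKM441_IKM261_sum} and collecting terms yields
\begin{align}
9\pi^2\IKM(4,4;1)-14\IKM(2,6;1)=\frac{\pi^6}{10}\bigl(5Q-2P\bigr).\notag
\end{align}
Hence (b) is equivalent to the single scalar relation $5\JYM(6,2;1)=2\JYM(8,0;1)$, which is what I would aim to prove.

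For part (a), I would equate the two Eichler representations \eqref{eq:J6Y2_Eichler} and \eqref{eq:J6Y2_Eichler'} of the one moment $\JYM(6,2;1)$. The obstruction to reading off a clean identity is that \eqref{eq:J6Y2_Eichler'} carries an integral over the segment joining the cusp $0$ to $\frac14+\frac{i}{4\sqrt3}$, a contour absent from \eqref{eq:J6Y2_Eichler}. I would fold this contour back onto the line $\R z=\frac12$ by the Fricke involution $z\mapsto-1/(6z)$: since $f_{6,6}$ has Fricke eigenvalue $-1$ (as one reads off from the Remark following Theorem \ref{thm:IKM_261}), tracking the weight-$6$ automorphy factor gives $36\int_0^{\frac14+\frac{i}{4\sqrt3}}f_{6,6}(z)z^4\D z=\int_{\frac12+\frac{i}{2\sqrt3}}^{\frac12+i\infty}f_{6,6}(z)\D z$, in the same spirit as the substitution $w=\widehat W_2z$ used in Proposition \ref{prop:JYM601}. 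After this folding, the discrepancy between the two representations lives entirely on the line $\R z=\frac12$ and on the arc $z=\frac12+\frac{i}{2\sqrt3}e^{i\varphi}$; simplifying it with the Atkin--Lehner involution $\widehat W_3$, whose fixed point $\frac12+\frac{i}{2\sqrt3}$ is exactly the common endpoint of the two contours, and matching the resulting polynomial weights by a finite $q$-expansion check (as invoked elsewhere via \cite[][Remark 1]{ChanZudilin2010}), produces the two-term identity \eqref{eq:2arc_sum0}.

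With (a) in hand, I would return to $5Q-2P$. Inserting the Eichler representations \eqref{eq:J8_Eichler} for $P$ and \eqref{eq:J6Y2_Eichler} for $Q$ expresses $5Q-2P$ as a fixed linear combination of the vertical-line period $A:=\int_{\frac12+\frac{i}{2\sqrt3}}^{\frac12+i\infty}f_{6,6}(z)(1-2z)^2\D z$ and of the arc periods $\int_{\frac14+\frac{i}{4\sqrt3}}^{\frac12+\frac{i}{2\sqrt3}}f_{6,6}(z)z^k\D z$. Using \eqref{eq:2arc_sum0} to eliminate $A$, the combination collapses to a single arc integral $\int_{\frac14+\frac{i}{4\sqrt3}}^{\frac12+\frac{i}{2\sqrt3}}f_{6,6}(z)q(z)\D z$ against an explicit polynomial $q$; establishing that this integral vanishes finishes the proof of $5Q=2P$ and hence of \eqref{eq:IKM441_IKM261_sum}.

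The main obstacle, shared by both parts, is the bookkeeping of these arc periods. Under $z\mapsto\widehat W_3z=\frac{3z-2}{6z-3}$ one has $cz+d=\sqrt3\,(2z-1)$, so that $f_{6,6}(\widehat W_3z)\,\D(\widehat W_3z)=\pm\,9(2z-1)^4f_{6,6}(z)\,\D z$, and $\widehat W_3$ sends the arc to its mirror image across $\R z=\frac12$. To see the residual arc integrals cancel, I would combine this automorphy factor and polynomial reweighting with the reality relation $f_{6,6}(1-\bar z)=\overline{f_{6,6}(z)}$ coming from the integral Fourier coefficients, which pairs each arc period with the conjugate of its mirror image. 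This cancellation is the analytic incarnation of the Eichler--Shimura--Manin relation $L(f_{6,6},5)/L(f_{6,6},3)=2\pi^2/21$ for the weight-$6$ newform $f_{6,6}$; carrying out the polynomial algebra inside these period relations is where the real work lies.
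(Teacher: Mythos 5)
Your part (b) bookkeeping is correct and agrees with the paper: the Wick rotations \eqref{eq:IKM261_Wick} and \eqref{eq:IKM441_Wick} do turn the sum rule into the single relation $5\JYM(6,2;1)=2\JYM(8,0;1)$, and your plan of eliminating the vertical-line period via (a) and reducing the leftover arc integral to zero is, in substance, the paper's own computation. The genuine gap is in part (a). Carrying out your proposed central step --- equating \eqref{eq:J6Y2_Eichler} with \eqref{eq:J6Y2_Eichler'} and folding the segment $\big[0,\frac14+\frac{i}{4\sqrt3}\big]$ by the Fricke involution, exactly as you describe --- yields
\begin{align*}
3\int_{\frac12+\frac{i}{2\sqrt3}}^{\frac12+i\infty}f_{6,6}(z)(1-2z)^2\,\D z+2\int_{\frac12+\frac{i}{2\sqrt3}}^{\frac12+i\infty}f_{6,6}(z)\,\D z=4\int_{\frac14+\frac{i}{4\sqrt3}}^{\frac12+\frac{i}{2\sqrt3}}f_{6,6}(z)\,z(1-3z)(3z^2-3z+1)\,\D z,
\end{align*}
a true period relation, but not \eqref{eq:2arc_sum0}: it contains the weight-zero line period $\int f_{6,6}(z)\,\D z$, and once the arc weights are reduced (the cubic and quartic powers of $z-\frac13$ collapse onto weights $1$ and $0$) it contains no $z^2$ arc period, whereas \eqref{eq:2arc_sum0} contains $\int f_{6,6}(z)z^2\,\D z$ over the arc and no line period $\int f_{6,6}(z)\,\D z$. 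The two relations are independent --- the paper in fact feeds both of them in as separate inputs in the subsequent theorem to extract $L(f_{6,6},5)/L(f_{6,6},3)$ --- so no amount of involution bookkeeping can convert one into the other. What your outline is missing is the paper's actual analytic input for (a): the additional Wick-rotation identity \eqref{eq:JY_1_10_5}, i.e.\ $\JYM(4,4;1)=-\frac15\JYM(8,0;1)+2\JYM(6,2;1)$, played against an \emph{independent} Eichler evaluation of $\JYM(4,4;1)$ obtained by Parseval--Plancherel self-fusion of the Hankel transform of $[J_0(t)]^2[Y_0(t)]^2$ from \eqref{eq:JJYY_Hankel} and \eqref{eq:JJYY_Hankel'}; equating those two computations of $\JYM(4,4;1)$ is what produces \eqref{eq:2arc_sum0}.

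Two further problems. The involution that governs the arc from $\frac14+\frac{i}{4\sqrt3}$ to $\frac12+\frac{i}{2\sqrt3}$ is $\widehat W_2$, not $\widehat W_3$: $\widehat W_2$ swaps the arc's endpoints (its fixed point $\frac13+\frac{i\sqrt2}{6}$ lies in the arc's interior), and \eqref{eq:f66_W2} gives the $\mathbb C$-linear reductions $144\int f_{6,6}(z)\left(z-\frac13\right)^4\D z=\frac49\int f_{6,6}(z)\D z$ and $192\int f_{6,6}(z)\left(z-\frac13\right)^3\D z=-\frac{32}{3}\int f_{6,6}(z)\left(z-\frac13\right)\D z$ on which every weight reduction in this proof depends. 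Your $\widehat W_3$ instead maps the arc to its mirror image across $\R z=\frac12$, so composing it with $z\mapsto1-\bar z$ is the reflection in the arc; that yields only reality constraints relating periods to their complex conjugates, which cannot replace the $\widehat W_2$ reductions. Finally, a finite $q$-expansion check certifies identities between modular forms, as in \eqref{eq:f66_two_forms}; it cannot certify an identity between Eichler integrals taken over distinct contours, so it cannot close the gap --- and invoking the ratio $L(f_{6,6},5)/L(f_{6,6},3)=2\pi^2/21$ would be circular, since the paper derives that ratio \emph{from} \eqref{eq:2arc_sum0} rather than the reverse.
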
\begin{proof}
\begin{enumerate}[leftmargin=*,  label=(\alph*),ref=(\alph*),
widest=a, align=left]\item We  spell out both sides of  \eqref{eq:JY_1_10_5} using Hankel fusions. The left-hand side becomes\begin{align}
\int_0^\infty [J_0(x)]^{4}[Y_0(x)]^4x\D x=\frac{20}{\pi i}\int_{\frac{1}{2}+\frac{i}{2\sqrt{3}}}^{\frac{1}{2}+i\infty}f_{6,6}(z)(1-2z)^{2}\D z+\frac{4}{\pi i}\int_{\frac{1}{4}+\frac{i}{4\sqrt{3}}}^{\frac{1}{2}+\frac{i}{2\sqrt{3}}}f_{6,6}(z)(1-2z)^{2}\D z,\label{eq:J4Y4_fusion}
\end{align}where
we have transformed
\begin{align}\int_0^{\frac{1}{4}+\frac{i}{4\sqrt{3}}}f_{6,6}(z)z^{2}(1-3z)^{2}\D z={}&\frac{1}{4}\int^{-\frac12+i\infty}_{-\frac{1}{2}+\frac{i}{2\sqrt{3}}}f_{6,6}(z)(1+2z)^{2}\D z\notag\\={}&\frac{1}{4}\int^{\frac12+i\infty}_{\frac{1}{2}+\frac{i}{2\sqrt{3}}}f_{6,6}(z)(1-2z)^{2}\D z,\end{align} by a Fricke involution $ z\mapsto-1/(6z)$ and a horizontal translation. The right-hand side becomes\begin{align}&
-\frac{1}{5}\int_0^\infty [J_0(x)]^6\{[J_0(x)]^2-10[Y_0(x)]^2\}x\D x\notag\\={}&\frac{84}{5\pi i}\int^{\frac12+i\infty}_{\frac{1}{2}+\frac{i}{2\sqrt{3}}}f_{6,6}(z)(1-2z)^{2}\D z-\frac{4}{5\pi i}\int_{\frac{1}{4}+\frac{i}{4\sqrt{3}}}^{\frac{1}{2}+\frac{i}{2\sqrt{3}}}f_{6,6}(z)\left[\frac{11}{9} + \frac{4}{3} \left(z - \frac{1}{3}\right)\right.\notag\\{}&\left. + 12\left(z - \frac{1}{3}\right) ^2 - 192\left(z - \frac{1}{3}\right) ^3 +
 144\left(z - \frac{1}{3}\right) ^4\right]\D z,\label{eq:J4Y4_fusion'_prep}
\end{align}according to \eqref{eq:IKM261_Wick},  \eqref{eq:J8_Eichler}, and \eqref{eq:J6Y2_Eichler}. We bear in mind that $ f_{6,6}(z)=[Z_{6,2}(z)]^3X_{6,2}(z)[1+9X_{6,2}(z)]$ is a modular form of weight 6 on $ \varGamma_0(6)_{+2}$, which transforms under $ \widehat W_2 z=\frac{2z-1}{6z-2}$ as \begin{align}
f_{6,6}(\widehat W_2z)=-8(3z-1)^{6}f_{6,6}(z).\label{eq:f66_W2}
\end{align}Thus, the identities\begin{align}
144\int_{\frac{1}{4}+\frac{i}{4\sqrt{3}}}^{\frac{1}{2}+\frac{i}{2\sqrt{3}}}f_{6,6}(z)\left(z -\frac{1}{3}\right)  ^4\D z={}&\frac{4}{9}\int_{\frac{1}{4}+\frac{i}{4\sqrt{3}}}^{\frac{1}{2}+\frac{i}{2\sqrt{3}}}f_{6,6}(z)\D z,\\192\int_{\frac{1}{4}+\frac{i}{4\sqrt{3}}}^{\frac{1}{2}+\frac{i}{2\sqrt{3}}}f_{6,6}(z)\left(z -\frac{1}{3}\right)  ^3\D z={}&-\frac{32}{3}\int_{\frac{1}{4}+\frac{i}{4\sqrt{3}}}^{\frac{1}{2}+\frac{i}{2\sqrt{3}}}f_{6,6}(z)\left(z -\frac{1}{3}\right)  \D z
\end{align}allow us to rewrite \eqref{eq:J4Y4_fusion'_prep} as \begin{align}&
-\frac{1}{5}\int_0^\infty [J_0(x)]^6\{[J_0(x)]^2-10[Y_0(x)]^2\}x\D x\notag\\={}&\frac{84}{5\pi i}\int^{\frac12+i\infty}_{\frac{1}{2}+\frac{i}{2\sqrt{3}}}f_{6,6}(z)(1-2z)^{2}\D z+\frac{4}{5\pi i}\int_{\frac{1}{4}+\frac{i}{4\sqrt{3}}}^{\frac{1}{2}+\frac{i}{2\sqrt{3}}}f_{6,6}(z)(1-4z-12z^{2})\D z.\label{eq:J4Y4_fusion'}
\end{align}

Identifying  \eqref{eq:J4Y4_fusion} with \eqref{eq:J4Y4_fusion'},
we arrive at  \eqref{eq:2arc_sum0}, as claimed.\item In the light of \eqref{eq:IKM261_Wick} and \eqref{eq:IKM441_Wick}, we see that the proposed sum rule is equivalent to the following vanishing identity:\begin{align}
\int_0^\infty[J_0(x)]^6\{2[J_0(x)]^2-5[Y_0(x)]^2\}x\D x=0.
\end{align}

We may compute\begin{align}&
\int_0^\infty[J_0(x)]^6\{2[J_0(x)]^2-5[Y_0(x)]^2\}x\D x\notag\\={}&\frac{12}{\pi i}\int^{\frac12+i\infty}_{\frac{1}{2}+\frac{i}{2\sqrt{3}}}f_{6,6}(z)(1-2z)^{2}\D z+\frac{1}{\pi i}\int_{\frac{1}{4}+\frac{i}{4\sqrt{3}}}^{\frac{1}{2}+\frac{i}{2\sqrt{3}}}f_{6,6}(z)\left[ \frac{28}{9} + \frac{32}{3} \left(z - \frac{1}{3}\right)\right.\notag\\{}&\left. + 96\left(z - \frac{1}{3}\right)^2 - 96\left(z - \frac{1}{3}\right)^3 +
 1152\left(z - \frac{1}{3}\right)^4 \right]\D z\notag\\={}&\frac{12}{\pi i}\int^{\frac12+i\infty}_{\frac{1}{2}+\frac{i}{2\sqrt{3}}}f_{6,6}(z)(1-2z)^{2}\D z+\frac{12}{\pi i}\int_{\frac{1}{4}+\frac{i}{4\sqrt{3}}}^{\frac{1}{2}+\frac{i}{2\sqrt{3}}}f_{6,6}(z)(1-4z+8z^2)\D z=0,
\end{align}where the first equality comes  from  \eqref{eq:J8_Eichler} and \eqref{eq:J6Y2_Eichler}, while the second and third equalities hinge on  \eqref{eq:f66_W2} and \eqref{eq:2arc_sum0}, respectively. \qedhere\end{enumerate}\end{proof}
\begin{theorem}[Relation between $ L(f_{6,6},3) $ and $ L(f_{6,6},5) $]\begin{enumerate}[leftmargin=*,  label=\emph{(\alph*)},ref=(\alph*),
widest=a, align=left]\item We have \begin{align}&
\frac{7}{6\pi^{5}i}\int_0^\infty[I_0(t)]^2[K_0(t)]^6t\D t\notag\\={}&\int_{\frac{1}{2}+\frac{i}{2\sqrt{3}}}^{\frac{1}{2}+i\infty}f_{6,6}(z)(1-2z^{2})\D z+\int_{\frac{1}{4}+\frac{i}{4\sqrt{3}}}^{\frac{1}{2}+\frac{i}{2\sqrt{3}}}f_{6,6}(z)z^{2}\D z-2 \int_0^{\frac{1}{4}+\frac{i}{4\sqrt{3}}}f_{6,6}(z)z^{2}\D z.\label{eq:I2K6_id1}
\end{align} \item We have \begin{align}&
\frac{21}{2\pi^{5}i}\int_0^\infty[I_0(t)]^2[K_0(t)]^6t\D t\notag\\={}&\int_{\frac{1}{2}+\frac{i}{2\sqrt{3}}}^{\frac{1}{2}+i\infty}f_{6,6}(z)(2+17z^{2})\D z+23\int_{\frac{1}{4}+\frac{i}{4\sqrt{3}}}^{\frac{1}{2}+\frac{i}{2\sqrt{3}}}f_{6,6}(z)z^{2}\D z+17 \int_0^{\frac{1}{4}+\frac{i}{4\sqrt{3}}}f_{6,6}(z)z^{2}\D z.\label{eq:I2K6_id2}
\end{align} \item The following integral identity holds:\begin{align}
\int_{0}^{i\infty}f_{6,6}(z)z^4\D z+\frac{2}{7}\int_{0}^{i\infty}f_{6,6}(z)z^{2}\D z=0,\label{eq:4over7_EShM_prep}
\end{align}which implies\begin{align}
\frac{L(f_{6,6},5)}{\zeta(2)L(f_{6,6},3)}=\frac{4}{7},\label{eq:4over7_EShM}
\end{align}where \begin{align}
L(f_{6,6},3):=\sum_{n=1}^\infty \frac{a_{6,6}(n)}{n^{3}}\left( 2+\frac{4\pi n}{\sqrt{6}}+\frac{2\pi^2 n^2}{3} \right)e^{-2\pi n/\sqrt{6}}.\label{eq:Lf66_3}
\end{align}\end{enumerate}\end{theorem}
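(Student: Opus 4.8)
The plan is to read parts (a) and (b) as two different Eichler representations of the single Bessel moment $\IKM(2,6;1)$, produced from the common Wick rotation \eqref{eq:IKM261_Wick} but fed by the two inequivalent modular parametrizations of $\int_0^\infty[J_0(x)]^6[Y_0(x)]^2x\D x$, and then to deduce (c) by eliminating $\IKM(2,6;1)$ between them and transporting the resulting relation onto the imaginary axis.

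\emph{Parts (a) and (b).} Starting from \eqref{eq:IKM261_Wick} I would write
\[
\IKM(2,6;1)=-\frac{\pi^6}{56}\Big[\JYM(8,0;1)-7\!\int_0^\infty[J_0(x)]^6[Y_0(x)]^2x\D x\Big],
\]
insert the Eichler formula \eqref{eq:J8_Eichler} for $\JYM(8,0;1)$, and then insert \eqref{eq:J6Y2_Eichler} to obtain (a) and \eqref{eq:J6Y2_Eichler'} to obtain (b). Each substitution produces integrals over the vertical segment $P_1$ from $\tfrac12+\tfrac{i}{2\sqrt3}$ to $\tfrac12+i\infty$, the arc $P_2$ from $\tfrac14+\tfrac{i}{4\sqrt3}$ to $\tfrac12+\tfrac{i}{2\sqrt3}$, and the arc $P_3$ from $0$ to $\tfrac14+\tfrac{i}{4\sqrt3}$, carrying the polynomial weights $(1-2z)^2$, $(1-6z+12z^2)^2$, $(1-6z+6z^2)^2$ of those formulas. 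I would reduce these to the clean monomials displayed in (a), (b) by folding $P_2$ with the Atkin--Lehner involution $\widehat W_2z=\tfrac{2z-1}{6z-2}$, which swaps the endpoints of $P_2$ and fixes its midpoint $\tfrac13+\tfrac{i\sqrt2}6$ while acting through $f_{6,6}(\widehat W_2z)=-8(3z-1)^6f_{6,6}(z)$ [see \eqref{eq:f66_W2}], and by folding $P_3$ onto a horizontal translate of $P_1$ with the Fricke involution $z\mapsto-1/(6z)$, under which $f_{6,6}(-1/(6z))=-216\,z^6f_{6,6}(z)$; the latter in fact yields the clean relation $\int_{P_3}f_{6,6}(z)z^2\D z=\int_{P_1}f_{6,6}(z)(z-1)^2\D z$.

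\emph{Part (c).} Since $\tfrac{21}2=9\cdot\tfrac76$, subtracting $9$ times (a) from (b) cancels $\IKM(2,6;1)$ and leaves the purely modular relation
\[
\int_{P_1}f_{6,6}(z)(5z^2-1)\D z+2\int_{P_2}f_{6,6}(z)z^2\D z+5\int_{P_3}f_{6,6}(z)z^2\D z=0.
\]
To reach \eqref{eq:4over7_EShM_prep} I would transport this to the imaginary axis. The point is that $\Gamma=P_3\cup P_2\cup P_1$ joins the cusp $0$ to the cusp $\tfrac12+i\infty$; because $f_{6,6}$ is a holomorphic cusp form the periods converge, the region swept out lies in $\mathfrak H$ free of poles, and the horizontal cap at $i\infty$ contributes nothing, so Cauchy's theorem gives $\int_\Gamma f_{6,6}(z)z^k\D z=\int_0^{i\infty}f_{6,6}(z)z^k\D z$, i.e.\ $\int_0^{i\infty}f_{6,6}z^k=\sum_{j}\int_{P_j}f_{6,6}z^k$. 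Hence both the displayed relation and the target \eqref{eq:4over7_EShM_prep} become linear identities among the periods $\int_{P_j}f_{6,6}z^k$; the Fricke relation just noted, the $\widehat W_2$ self-folding of $P_2$, and above all the vanishing identity \eqref{eq:2arc_sum0} supply exactly the extra relations needed to pass from one to the other.

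\emph{The $L$-value and the main obstacle.} Once \eqref{eq:4over7_EShM_prep} is in hand, \eqref{eq:4over7_EShM} follows by termwise integration: folding each of $\int_0^{i\infty}f_{6,6}z^2\D z$ and $\int_0^{i\infty}f_{6,6}z^4\D z$ through the Fricke fixed point $i/\sqrt6$ expresses them as $-\tfrac{i}{4\pi^3}L(f_{6,6},3)$ and $\tfrac{3i}{4\pi^5}L(f_{6,6},5)$, with $L(f_{6,6},3)$ as in \eqref{eq:Lf66_3} and $L(f_{6,6},5)$ via \eqref{eq:IKM261_eta_int}; substituting into \eqref{eq:4over7_EShM_prep} and using $\zeta(2)=\pi^2/6$ gives $L(f_{6,6},5)=\tfrac{2\pi^2}{21}L(f_{6,6},3)$, hence the ratio $\tfrac47$. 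The hard part will be the weight bookkeeping in Part (c): tracking how the monomials $z^k$ behave under the fractional-linear involutions $\widehat W_2$ and $z\mapsto-1/(6z)$ (with their automorphy factors $(3z-1)^k$ and $z^k$ and the orientation reversals), and confirming that \eqref{eq:2arc_sum0} is precisely the identity that collapses the arc contribution so that the broken-path relation coincides with \eqref{eq:4over7_EShM_prep}.
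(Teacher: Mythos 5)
Your parts (a) and (b) follow the paper's own route: Wick rotation \eqref{eq:IKM261_Wick}, insertion of \eqref{eq:J8_Eichler} together with \eqref{eq:J6Y2_Eichler} (for (a)) or \eqref{eq:J6Y2_Eichler'} (for (b)), then cleanup by the $\widehat W_2$-folding \eqref{eq:f66_W2} on the middle arc and the Fricke involution on the small arc. Your ``clean relation'' $\int_{P_3}f_{6,6}(z)z^2\,\D z=\int_{P_1}f_{6,6}(z)(z-1)^2\,\D z$ (with $P_1$ the segment from $\tfrac12+\tfrac{i}{2\sqrt3}$ to $\tfrac12+i\infty$, $P_2$ the arc from $\tfrac14+\tfrac{i}{4\sqrt3}$ to $\tfrac12+\tfrac{i}{2\sqrt3}$, $P_3$ the arc from $0$ to $\tfrac14+\tfrac{i}{4\sqrt3}$) checks out and is exactly the manipulation the paper performs; note only that landing on the stated coefficients of (b) also consumes \eqref{eq:2arc_sum0}, via the reduction \eqref{eq:quadratic_mid_arc}. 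Your constants in the final $L$-value step are likewise correct.

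Part (c), however, has a genuine gap. Forming (b)$-9\times$(a) cancels $\IKM(2,6;1)$ entirely, and the surviving identity, after Fricke conversion of the $P_3$-period, reads $2A_0-5A_1+5A_2+B_2=0$ in the notation $A_k=\int_{P_1}f_{6,6}(z)z^k\,\D z$, $B_k=\int_{P_2}f_{6,6}(z)z^k\,\D z$. Every further tool on your list has, once all integrals are expressed over $P_1,P_2,P_3$, \emph{zero} net coefficient on $A_4$: the identity \eqref{eq:2arc_sum0} and your relation involve only $A_0,A_1,A_2,B_0,B_1,B_2$; the $\widehat W_2$-self-foldings relate the $B_k$ among themselves; and in the axis relation $\int_0^{i\infty}f_{6,6}(z)z^4\,\D z=\tfrac1{36}\int_0^{i\infty}f_{6,6}(z)\,\D z$ the two occurrences of $A_4$ cancel, because $\int_{P_3}f_{6,6}(z)\,\D z=36\int_{P_1}f_{6,6}(z)(z-1)^4\,\D z$. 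Since \eqref{eq:4over7_EShM_prep}, written over the three sub-paths, carries $A_4$ with a nonzero coefficient (invariant modulo your relations), it cannot be a linear consequence of them: the system is one relation short, and ``supply exactly the extra relations needed'' is where the argument fails. The discarded information is precisely Theorem \ref{thm:IKM_261}, $\IKM(2,6;1)=\frac{\pi^{5}}{4i}\int_0^{i\infty}f_{6,6}(z)\,\D z$. The paper instead takes (b)$-2\times$(a), which eliminates only the weight-zero period $\int_{P_1}f_{6,6}(z)\,\D z$ while \emph{retaining} the Bessel moment, obtaining $\int_0^{i\infty}f_{6,6}(z)z^{2}\,\D z=\frac{7}{18\pi^{5}i}\IKM(2,6;1)$; substituting \eqref{eq:IKM261_Eichler} and then the axis Fricke relation yields \eqref{eq:4over7_EShM_prep}. (Alternatively, as the paper remarks, one may fuse \eqref{eq:JKKKK_YIKKK'} with itself and invoke the sum rule \eqref{eq:IKM441_IKM261_sum}.) With that one repair, the rest of your outline goes through.
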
\begin{proof}
\begin{enumerate}[leftmargin=*,  label=(\alph*),ref=(\alph*),
widest=a, align=left]\item\label{itm:IKM261_a} According to \eqref{eq:IKM261_Wick},  \eqref{eq:J8_Eichler},  \eqref{eq:J6Y2_Eichler} and   \eqref{eq:f66_W2}, we have \begin{align}&
-\frac{7}{8}\left(\frac2\pi\right)^6
\int_0^\infty[I_0(t)]^2[K_0(t)]^6t\D t+\frac{48}{\pi i}\int_{\frac{1}{2}+\frac{i}{2\sqrt{3}}}^{\frac{1}{2}+i\infty}f_{6,6}(z)(1-2z)^{2}\D z\notag\\={}&\frac{4}{\pi i}\int_{\frac{1}{4}+\frac{i}{4\sqrt{3}}}^{\frac{1}{2}+\frac{i}{2\sqrt{3}}}f_{6,6}(z)\left[ \frac{8}{9} +\frac{4}{3}  \left(z -\frac{1}{3}\right)  + 12\left(z -\frac{1}{3}\right)  ^2 - 120\left(z -\frac{1}{3}\right)  ^3 +
 144\left(z -\frac{1}{3}\right)  ^4 \right]\D z\notag\\={}&\frac{48}{\pi i}\int_{\frac{1}{4}+\frac{i}{4\sqrt{3}}}^{\frac{1}{2}+\frac{i}{2\sqrt{3}}}f_{6,6}(z)z^{2}\D z.\label{eq:I2K6_prep0}
\end{align}

In the meantime, by complex conjugation, we have\begin{align}
\int_{\frac{1}{2}+\frac{i}{2\sqrt{3}}}^{\frac{1}{2}+i\infty}f_{6,6}(z)(1-2z)^{2}\D z=\int_{-\frac{1}{2}+\frac{i}{2\sqrt{3}}}^{-\frac{1}{2}+i\infty}f_{6,6}(z)(1+2z)^{2}\D z,
\end{align}whereas $ f_{6,6}(z)=f_{6,6}(z+1)$ brings us\begin{align}
\int_{\frac{1}{2}+\frac{i}{2\sqrt{3}}}^{\frac{1}{2}+i\infty}f_{6,6}(z)(1-4z)\D z+\int_{-\frac{1}{2}+\frac{i}{2\sqrt{3}}}^{-\frac{1}{2}+i\infty}f_{6,6}(z)(1+4z)\D z=-2\int_{\frac{1}{2}+\frac{i}{2\sqrt{3}}}^{\frac{1}{2}+i\infty}f_{6,6}(z)\D z.
\end{align}Therefore, we obtain\begin{align}{}&
\int_{\frac{1}{2}+\frac{i}{2\sqrt{3}}}^{\frac{1}{2}+i\infty}f_{6,6}(z)(1-2z)^{2}\D z\notag\\={}&-\int_{\frac{1}{2}+\frac{i}{2\sqrt{3}}}^{\frac{1}{2}+i\infty}f_{6,6}(z)\D z+2\int_{\frac{1}{2}+\frac{i}{2\sqrt{3}}}^{\frac{1}{2}+i\infty}f_{6,6}(z)z^{2}\D z+2\int_{-\frac{1}{2}+\frac{i}{2\sqrt{3}}}^{-\frac{1}{2}+i\infty}f_{6,6}(z)z^{2}\D z\notag\\={}&-\int_{\frac{1}{2}+\frac{i}{2\sqrt{3}}}^{\frac{1}{2}+i\infty}f_{6,6}(z)\D z+2\left( \int_0^{\frac{1}{4}+\frac{i}{4\sqrt{3}}}+\int_{\frac{1}{2}+\frac{i}{2\sqrt{3}}}^{\frac{1}{2}+i\infty}\right)f_{6,6}(z)z^{2}\D z,
\end{align}after invoking  $ f_{6,6}(-1/(6z))=-216z^{6}f_{6,6}(z)$ in the last step.

All this allows us to rearrange \eqref{eq:I2K6_prep0} into \eqref{eq:I2K6_id1}.\item\label{itm:IKM261_b} In view of  \eqref{eq:IKM261_Wick},  \eqref{eq:J8_Eichler}, and \eqref{eq:J6Y2_Eichler'}, we have\begin{align}&
-\frac{7}{8}\left(\frac2\pi\right)^6
\int_0^\infty[I_0(t)]^2[K_0(t)]^6t\D t-\frac{4}{\pi i}\int_{\frac{1}{2}+\frac{i}{2\sqrt{3}}}^{\frac{1}{2}+i\infty}f_{6,6}(z)[9(1-2z)^{2}+7]\D z\notag\\={}&\frac{4}{\pi i}\int_{\frac{1}{4}+\frac{i}{4\sqrt{3}}}^{\frac{1}{2}+\frac{i}{2\sqrt{3}}}f_{6,6}(z)\left[ \frac{8}{9} +\frac{32}{3}  \left(z -\frac{1}{3}\right)  + 12\left(z -\frac{1}{3}\right)  ^2 - 120\left(z -\frac{1}{3}\right)  ^3 +
 396\left(z -\frac{1}{3}\right)  ^4 \right]\D z\notag\\{}&+\frac{1008}{\pi i}\int_0^{\frac{1}{4}+\frac{i}{4\sqrt{3}}}f_{6,6}(z)z^{4}\D z.\label{eq:I2K6_prep1}
\end{align}  As before, we may reduce\begin{align}&
\int_{\frac{1}{2}+\frac{i}{2\sqrt{3}}}^{\frac{1}{2}+i\infty}f_{6,6}(z)[9(1-2z)^{2}+7]\D z\notag\\={}&-2\int_{\frac{1}{2}+\frac{i}{2\sqrt{3}}}^{\frac{1}{2}+i\infty}f_{6,6}(z)\D z+18\left( \int_0^{\frac{1}{4}+\frac{i}{4\sqrt{3}}}+\int_{\frac{1}{2}+\frac{i}{2\sqrt{3}}}^{\frac{1}{2}+i\infty}\right)f_{6,6}(z)z^{2}\D z,\end{align}\begin{align}36\int_0^{\frac{1}{4}+\frac{i}{4\sqrt{3}}}f_{6,6}(z)z^{4}\D z={}&\int_{\frac{1}{2}+\frac{i}{2\sqrt{3}}}^{\frac{1}{2}+i\infty}f_{6,6}(z)\D z,
\end{align}and\begin{align}&
\int_{\frac{1}{4}+\frac{i}{4\sqrt{3}}}^{\frac{1}{2}+\frac{i}{2\sqrt{3}}}f_{6,6}(z)\left[ \frac{8}{9} +\frac{32}{3}  \left(z -\frac{1}{3}\right)  + 12\left(z -\frac{1}{3}\right)  ^2 - 120\left(z -\frac{1}{3}\right)  ^3 +
 396\left(z -\frac{1}{3}\right)  ^4 \right]\D z\notag\\={}&\frac13\int_{\frac{1}{4}+\frac{i}{4\sqrt{3}}}^{\frac{1}{2}+\frac{i}{2\sqrt{3}}}f_{6,6}(z)(-7 + 28 z + 36 z^2)\D z.\label{eq:quadratic_mid_arc}
\end{align}
 By virtue of the vanishing identity in  \eqref{eq:2arc_sum0}, the right-hand side of \eqref{eq:quadratic_mid_arc} is also equal to\begin{align}&
\frac{7}{3}\int^{\frac12+i\infty}_{\frac{1}{2}+\frac{i}{2\sqrt{3}}}f_{6,6}(z)(1-2z)^{2}\D z+\frac{92}{3}\int_{\frac{1}{4}+\frac{i}{4\sqrt{3}}}^{\frac{1}{2}+\frac{i}{2\sqrt{3}}}f_{6,6}(z)z^2\D z\notag\\={}&-\frac{7}{3}\int^{\frac12+i\infty}_{\frac{1}{2}+\frac{i}{2\sqrt{3}}}f_{6,6}(z)\D z+\frac{14}{3}\left( \int_0^{\frac{1}{4}+\frac{i}{4\sqrt{3}}}+\int_{\frac{1}{2}+\frac{i}{2\sqrt{3}}}^{\frac{1}{2}+i\infty}\right)f_{6,6}(z)z^{2}\D z+\frac{92}{3}\int_{\frac{1}{4}+\frac{i}{4\sqrt{3}}}^{\frac{1}{2}+\frac{i}{2\sqrt{3}}}f_{6,6}(z)z^2\D z.
\end{align}

Gathering the results above, we arrive at \eqref{eq:I2K6_id2}.\item Eliminating \begin{align}\int_{\frac{1}{2}+\frac{i}{2\sqrt{3}}}^{\frac{1}{2}+i\infty}f_{6,6}(z)\D z \end{align} from  \eqref{eq:I2K6_id1} and \eqref{eq:I2K6_id2}, we obtain\begin{align}
\int_{0}^{i\infty}f_{6,6}(z)z^{2}\D z=\frac{7}{18\pi^{5}i}\int_0^\infty[I_0(t)]^2[K_0(t)]^6t\D t,
\end{align}which is equivalent to \eqref{eq:4over7_EShM_prep}. [There is also an alternative way to arrive at the equation above, namely, by fusing \eqref{eq:JKKKK_YIKKK'} with itself, and referring to \eqref{eq:IKM441_IKM261_sum}.] Checking the definition of $ L(f_{6,6},3)$ in \eqref{eq:Lf66_3} against  termwise integration on the right-hand side of the following equation:\begin{align}
4\pi^{3}i\int_{0}^{i\infty}f_{6,6}(z)z^{2}\D z=8\pi^{3 }i\int_{i/\sqrt{6}}^{i\infty}f_{6,6}(z)z^{2}\D z,
\end{align} we can verify \eqref{eq:4over7_EShM}.
\qedhere\end{enumerate}\end{proof}\begin{remark}Previously, Broadhurst observed that $ L(f_{6,6},5)/[\zeta(2)L(f_{6,6},3)]$ must be a rational number, according to Eichler--Shimura--Manin theory \cite[cf.][Theorem 1]{Shimura1977}, and found this rational number to be numerically $4/7$ \cite[][(142)]{Broadhurst2016}.\eor\end{remark}\begin{remark}As a by-product of the foregoing computations, one may eliminate $ \JYM(6,2;1)$ from  \eqref{eq:IKM261_Wick} and \eqref{eq:IKM441_Wick}, to deduce\begin{align}
\int_0^\infty[J_0(x)]^8x\D x=\frac{70}{9\pi i }\int_{0}^{i\infty}f_{6,6}(z)\D z=-\frac{80}{\pi i}\int_{0}^{i\infty}f_{6,6}(z)z^{2}\D z=\frac{280}{\pi i}\int_{0}^{i\infty}f_{6,6}(z)z^{4}\D z,
\end{align}which gives $L$-series representations for a ``random walk integral'' $ \JYM(8,0;1)$.

Furthermore, we have recently shown \cite[][Theorem 5.1]{Zhou2017PlanarWalks} that for each $ j\in\mathbb Z_{>1}$, the function  $\int _{0}^{\infty}J_{0}(x t)[J_0(t)]^{2j+1}t\D t,0\leq x\leq 1 $ is a  $ \mathbb Q$-linear combination of \begin{align}
\int_0^\infty I_0(xt)[I_0(t)]^{2m+1}\left[\frac{K_{0}(t)}{\pi}\right]^{2(j-m)}t\D t,\;\text{where } m\in\mathbb Z\cap[0,(j-1)/2].\label{eq:pn_IKM}
\end{align}   This implies that, for all $ n\in\mathbb Z_{>4}$, the ``random walk integral''  $ \JYM(n,0;1)$  is a   $ \mathbb Q$-linear combination of $ \IKM(a,b;1)/\pi^b$ for  certain positive integers $ a$ and $b$ satisfying $ a+b=n  $.\eor\end{remark}

Finally, we verify Broadhurst's conjectures regarding $ \IKM(1,7;1)$ and $ \IKM(3,5;1)$.\begin{theorem}[Sunrise at 6 loops]We have \begin{align}
\pi^{2}\int_0^\infty [I_0(t)]^{3}[K_0(t)]^5t\D t=\int_0^\infty I_0(t)[K_0(t)]^7t\D t=-\pi^{6}\int_0^{i\infty} f_{6,6}(z)z\D z,\label{eq:sunrise_6loop}
\end{align}which is equivalent to  \eqref{eq:IKM171_351_eta_int}.\end{theorem}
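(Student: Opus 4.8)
The plan is to transcribe the architecture of the 4-loop sunrise into the 8-Bessel setting, promoting the three-Bessel building blocks to four-Bessel ones and the weight-4 form $f_{4,6}$ to $f_{6,6}$. The opening equality $\pi^2\IKM(3,5;1)=\IKM(1,7;1)$ in \eqref{eq:sunrise_6loop} is already available for free: it is the case $m=4$, $n=1$ of the sum rule \eqref{eq:HB_sum_rule'} proved in \cite{HB1}, where the hypothesis $\tfrac{m-n-1}{2}=1\in\mathbb Z_{>0}$ is met and the expansion of $[\pi I_0+iK_0]^4-[\pi I_0-iK_0]^4$ divided by $i$ retains only the odd powers of $iK_0$, collapsing the vanishing integral to $8\pi[\pi^2\IKM(3,5;1)-\IKM(1,7;1)]=0$. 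The substance is the middle equality, and the core idea is to realize $\IKM(1,7;1)$ as a Hankel fusion of the two four-Bessel transforms $\int_0^\infty J_0(xt)I_0(t)[K_0(t)]^3t\D t$ and $\int_0^\infty J_0(xt)[K_0(t)]^4t\D t$: by the Parseval--Plancherel theorem for Hankel transforms, their $L^2(x\D x)$ pairing equals
\[\IKM(1,7;1)=\int_0^\infty\left[\int_0^\infty J_0(xt)I_0(t)[K_0(t)]^3t\D t\right]\left[\int_0^\infty J_0(x\tau)[K_0(\tau)]^4\tau\D\tau\right]x\D x.\]

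The difficulty is that the Hankel transform of $[K_0]^4$ is not modular on its own; only the combination in \eqref{eq:JKKKK_YIKKK} is. I would therefore first insert a vanishing cross term. Taking $F=I_0[K_0]^3$ in the Hilbert-cancelation Lemma \ref{lm:HT_JY}, whose hypotheses I would verify from the explicit Kramers--Kronig transform $\mathscr P\int_{-\infty}^\infty\frac{2\pi I_0(t)[K_0(|t|)]^3|t|\D t}{\pi(\tau-t)}=\{[\pi I_0(\tau)]^2-[K_0(|\tau|)]^2\}[K_0(|\tau|)]^2\tau$ recorded after \eqref{eq:JKKKK_YIKKK} (its small-$\tau$ behavior is $O(\sqrt\tau)$ and its large-$\tau$ behavior is $O(1/|\tau|)$, as required), yields that the pairing of the Hankel transform of $I_0[K_0]^3$ against the $Y$-transform of $I_0[K_0]^3$ vanishes. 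Adding $-2\pi$ times this null identity lets me replace the $[K_0]^4$ Hankel transform inside the fusion by the modular combination $\int_0^\infty J_0(xt)[K_0(t)]^4t\D t-2\pi\int_0^\infty Y_0(xt)I_0(t)[K_0(t)]^3t\D t=\frac{\pi^3z}{4i}Z_{6,3}(z)$ of \eqref{eq:JKKKK_YIKKK}, while the first factor is $\frac{\pi^2}{16}Z_{6,3}(z)$ by \eqref{eq:IKKK_Hankel_eta}, both along $z/i>0$ with $x=[2\eta(2z)\eta(6z)/(\eta(z)\eta(3z))]^3$.

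It then remains to convert $x\D x=x^2\cdot\tfrac1x\tfrac{\D x}{\D z}\D z$ using $x^2=64X_{6,3}(z)$ and the Jacobian \eqref{eq:xz_Jac}. The numerical constants collapse to $\pi^6$, and the modular part simplifies through the eta-quotient identities already in hand: a one-line computation gives $[Z_{6,3}(z)]^2X_{6,3}(z)=[\eta(z)\eta(2z)\eta(3z)\eta(6z)]^2=f_{4,6}(z)$, and multiplying by the Jacobian bracket $\frac{[\eta(z)\eta(2z)]^3}{\eta(3z)\eta(6z)}+9\frac{[\eta(3z)\eta(6z)]^3}{\eta(z)\eta(2z)}$ turns $f_{4,6}(z)$ into $f_{6,6}(z)$ by \eqref{eq:f66_two_forms}. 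Hence the integrand becomes $\pi^6zf_{6,6}(z)\D z$; since $x$ runs from $0$ to $\infty$ as $z$ descends from $i\infty$ to $0$, the orientation reversal supplies the minus sign, delivering $\IKM(1,7;1)=-\pi^6\int_0^{i\infty}f_{6,6}(z)z\D z$.

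For the equivalence with \eqref{eq:IKM171_351_eta_int}, I would invoke the Fricke functional equation $f_{6,6}(-1/(6z))=-216z^6f_{6,6}(z)$ (already used in the $\IKM(2,6;1)$ and $\IKM(4,4;1)$ analysis). Substituting $z\mapsto-1/(6z)$ in $J_k:=\int_0^{i\infty}f_{6,6}(z)z^k\D z$ gives $J_k=(-1)^k6^{2-k}J_{4-k}$, so $J_1=-6J_3$; combined with $\IKM(1,7;1)=\pi^2\IKM(3,5;1)$ this rewrites $-\pi^6J_1$ as $6\pi^6J_3$, i.e.\ $\IKM(3,5;1)=6\pi^4\int_0^{i\infty}f_{6,6}(z)z^3\D z$, and termwise integration of the $q$-expansion of $f_{6,6}$ identifies this with $\tfrac94L(f_{6,6},4)$. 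I expect the main obstacle to be bookkeeping rather than conceptual: checking the hypotheses of Lemma \ref{lm:HT_JY} for $F=I_0[K_0]^3$ and pinning down the overall sign and orientation in the change of variables, since a stray factor there would corrupt the identification with $f_{6,6}$. Once those are secured, each remaining step is a direct transcription of identities established in \S\ref{sec:5Bessel}--\S\ref{sec:8Bessel}.
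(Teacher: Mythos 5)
Your proposal is correct and follows essentially the same route as the paper: the first equality via the $m=4$, $n=1$ case of \eqref{eq:HB_sum_rule'}, the middle equality by fusing \eqref{eq:IKKK_Hankel_eta} with \eqref{eq:JKKKK_YIKKK} under the Parseval--Plancherel theorem after inserting the vanishing $J$--$Y$ cross term from Lemma \ref{lm:HT_JY} with $F=I_0[K_0]^3$, and the equivalence with \eqref{eq:IKM171_351_eta_int} via the Fricke involution $z\mapsto-1/(6z)$. Your explicit bookkeeping---$x^2=64X_{6,3}(z)$, the Jacobian \eqref{eq:xz_Jac}, $[Z_{6,3}]^2X_{6,3}=f_{4,6}$, the identity \eqref{eq:f66_two_forms}, and the orientation-reversal sign---merely spells out what the paper compresses into ``computations similar to those in Theorem \ref{thm:IKM_261}'', and it all checks out.
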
\begin{proof}The first equality in  \eqref{eq:sunrise_6loop}, which says\begin{align}
\int_0^\infty \frac{[\pi I_{0}(t)+iK_0(t)]^4-[\pi I_{0}(t)-iK_0(t)]^4}{i}[K_0(t)]^4t\D t=0,
\end{align}is a special case ($ m=4,n=1$) of \eqref{eq:HB_sum_rule'}.

Fusing together   \eqref{eq:IKKK_Hankel_eta} and \eqref{eq:JKKKK_YIKKK}, while noting that (see Lemma \ref{lm:HT_JY})\begin{align}
\int_0^\infty \left\{ \int_{0}^\infty J_0(xt)I_0(t)[K_0(t)]^3 t\D t \right\}\left\{ \int_{0}^\infty Y_0(x\tau)I_0(\tau)[K_0(\tau)]^3 \tau\D \tau \right\}x\D x=0,
\end{align}we arrive at the last equality in \eqref{eq:sunrise_6loop}, after  some computations similar to those in Theorem \ref{thm:IKM_261}. Alternatively, we can throw    \eqref{eq:IKKK_Hankel_eta} and \eqref{eq:JKKKK_YIKKK'} into the Parseval--Plancherel theorem for Hankel transforms, and invoke the first equality in   \eqref{eq:sunrise_6loop}.

It is clear that    \eqref{eq:sunrise_6loop} is compatible with \eqref{eq:IKM171_351_eta_int}, up to a Fricke involution $z\mapsto -1/(6z) $ in the integrand.       \end{proof}
\subsection*{Acknowledgments}This research was supported in part  by the Applied Mathematics Program within the Department of Energy
(DOE) Office of Advanced Scientific Computing Research (ASCR) as part of the Collaboratory on
Mathematics for Mesoscopic Modeling of Materials (CM4). 

This manuscript grew out of my research notes formerly intended for a project on automorphic representations \cite{AGF_PartI,EZF,AGF_PartII} at Princeton in 2013, and was completed in  2017 during my stay  in Beijing arranged by Prof.\ Weinan E (Princeton University and Peking University). I thank Dr.\ David Broadhurst for providing valuable  background information in quantum field theory, as well as for sharing with me his slides for  recent talks in Paris \cite{Broadhurst2017Paris}, Marseille \cite{Broadhurst2017CIRM}, Edinburgh \cite{Broadhurst2017Higgs}, Zeuthen \cite{Broadhurst2017DESY} and Vienna \cite{Broadhurst2017ESIa,Broadhurst2017ESI}.
I also thank him for pointing out an error in an early  draft.

I am deeply indebted to the anonymous referees for their careful examinations and detailed analyses of this work. I am grateful to them for their thoughtful comments and suggestions that helped me improve the presentation of this paper.

\vspace{2em}

\end{document}